\newcommand{\deff}{\mbox{$\stackrel{\rm def}{=}$}}
\newcommand{\sbinom}[2]{\left[ \begin{array}{c} #1 \\ #2 \end{array} \right] }
\newcommand{\field}[1]{\mathbb{#1}}
\newcommand{\Z}{\field{Z}}
\newcommand{\R}{\field{R}}
\newcommand{\T}{\field{T}}
\newcommand{\cB}{{\cal B}}
\newcommand{\cC}{{\cal C}}
\newcommand{\cD}{{\cal D}}
\newcommand{\cF}{{\cal F}}
\newcommand{\cS}{{\cal S}}
\newcommand{\cT}{{\cal T}}
\newcommand{\cP}{{\cal P}}
\newcommand{\sP}{\field{P}}
\newcommand{\sG}{\field{G}}
\DeclareMathAlphabet{\mathbfsl}{OT1}{cmr}{bx}{it}
\newcommand{\uuu}{\kern-1pt\mathbfsl{u}\kern-0.5pt}
\newcommand{\vvv}{\kern-1pt\mathbfsl{v}\kern-0.5pt}
\newcommand{\myboxplus}{\kern1pt\mbox{\small$\boxplus$}}
\makeatletter \DeclareRobustCommand{\sbinom}{\genfrac[]\z@{}}
\newcommand{\G}[2]{\sbinom{{#1}\kern-1pt}{{#2}\kern-1pt}}
\newcommand{\Gq}[2]{\sbinom{{#1}\kern-0.25pt}{{#2}\kern-0.25pt}}
\newcommand{\Ps}{\smash{{\sP\kern-2.0pt}_q\kern-0.5pt(n)}}
\newcommand{\sPs}{\smash{{\sP\kern-1.5pt}_q(n)}}
\newcommand{\Ptwo}{\smash{{\sP\kern-2.0pt}_2\kern-0.5pt(n)}}
\newcommand{\Ptwom}{\smash{{\sP\kern-2.0pt}_2\kern-0.5pt(m)}}
\newcommand{\Ptwonm}{\smash{{\sP\kern-2.0pt}_2\kern-0.5pt(n+m)}}
\newcommand{\Ptwoa}{\smash{{\sP\kern-2.0pt}_2\kern-0.5pt(1)}}
\newcommand{\Ptwob}{\smash{{\sP\kern-2.0pt}_2\kern-0.5pt(2)}}
\newcommand{\Ptwoc}{\smash{{\sP\kern-2.0pt}_2\kern-0.5pt(3)}}
\newcommand{\Ptwod}{\smash{{\sP\kern-2.0pt}_2\kern-0.5pt(4)}}
\newcommand{\Ptwoe}{\smash{{\sP\kern-2.0pt}_2\kern-0.5pt(5)}}
\newcommand{\Ptwof}{\smash{{\sP\kern-2.0pt}_2\kern-0.5pt(6)}}
\newcommand{\Ptwokm}{\smash{{\sP\kern-2.0pt}_2\kern-0.5pt(2k-1)}}
\newcommand{\Pone}{\smash{{\sP\kern-2.5pt}_2\kern-0.5pt(n{-}1)}}
\newcommand{\Gr}{\smash{{\sG\kern-1.5pt}_q\kern-0.5pt(n,k)}}
\newcommand{\Gi}{\smash{{\sG\kern-1.5pt}_q\kern-0.5pt(n,i)}}
\newcommand{\Gj}{\smash{{\sG\kern-1.5pt}_q\kern-0.5pt(n,j)}}
\newcommand{\Grmk}{\smash{{\sG\kern-1.5pt}_q\kern-0.5pt(n,n-k)}}
\newcommand{\Grdk}{\smash{{\sG\kern-1.5pt}_q\kern-0.5pt(2k,k)}}
\newcommand{\Grekappa}{\smash{{\sG\kern-1.5pt}_q\kern-0.5pt(n,e+1-\kappa)}}
\newcommand{\Grtwoekappa}{\smash{{\sG\kern-1.5pt}_q\kern-0.5pt(n,2e+1-\kappa)}}
\newcommand{\Gremkappa}{\smash{{\sG\kern-1.5pt}_q\kern-0.5pt(n,e-\kappa)}}
\newcommand{\Gn}{\smash{{\sG\kern-1.5pt}_2\kern-0.5pt(n,n{-}1)}}
\newcommand{\Gnq}{\smash{{\sG\kern-1.5pt}_q\kern-0.5pt(n,n{-}1)}}
\newcommand{\Gone}{\smash{{\sG\kern-1.5pt}_2\kern-0.5pt(n,1)}}
\newcommand{\Gqone}{\smash{{\sG\kern-1.5pt}_q\kern-0.5pt(n,1)}}
\newcommand{\GTwo}{\smash{{\sG\kern-1.5pt}_2\kern-0.5pt(n,k)}}
\newcommand{\GTwonk}[2]{{\smash{{\sG\kern-1.5pt}_2\kern-0.5pt({#1},{#2})}}}
\newcommand{\Gnk}{\smash{{\sG\kern-1.5pt}_2\kern-0.5pt(n,n{-}k)}}
\newcommand{\Greone}{\smash{{\sG\kern-1.5pt}_q\kern-0.5pt(n,e{+}1)}}
\newcommand{\Gretwo}{\smash{{\sG\kern-1.5pt}_q\kern-0.5pt(n,e{+}2)}}
\newcommand{\be}[1]{\begin{equation}\label{#1}}
\newcommand{\ee}{\end{equation}}
\newcommand{\Cref}[1]{Co\-rol\-la\-ry\,\ref{#1}}
\newtheorem{theorem}{Theorem}[section]
\newtheorem{lemma}[theorem]{Lemma}
\newtheorem{cor}[theorem]{Corollary}
\theoremstyle{definition}  
\newtheorem{remark}{Remark}
\newtheorem{example}{Example}
\newtheorem{construction}{Construction}
\begin{document}

\bibliographystyle{plain}

\title{
\begin{center}
Tilings by $(0.5,n)$-Crosses and Perfect Codes
\end{center}
}
\author{
{\sc Sarit Buzaglo}\thanks{Department of Computer Science,
Technion, Haifa 32000, Israel, e-mail: {\tt
sarahb@cs.technion.ac.il}.} \and {\sc Tuvi
Etzion}\thanks{Department of Computer Science, Technion, Haifa
32000, Israel, e-mail: {\tt etzion@cs.technion.ac.il}.}}

\maketitle

\begin{abstract}
\hspace{2pt} The existence question for tiling of the
$n$-dimensional Euclidian space by crosses is well known. A few
existence and nonexistence results are known in the literature. Of
special interest are tilings of the Euclidian space by crosses
with arms of length one, known also as Lee spheres with radius
one. Such a tiling forms a perfect code. In this paper crosses
with arms of length half are considered. These crosses are scaled
by two to form a discrete shape. A tiling with this shape is also known
as a perfect dominating set. We prove that an integer tiling
for such a shape exists if and only if $n=2^t-1$ or $n=3^t-1$, where
$t>0$. A strong connection of these tilings to binary and ternary
perfect codes in the Hamming scheme is shown.
\end{abstract}

\vspace{0.5cm}

\noindent {\bf Keywords:} cross, perfect code, perfect dominating set, semicross, tiling.

\vspace{0.5cm}

\noindent {\bf AMS subject classifications:} 05B07, 11H31, 11H71, 52C22.

\footnotetext[1] { This research was supported in part by the
Israel Science Foundation (ISF), Jerusalem, Israel, under Grant
230/08. }

\newpage
\section{Introduction}
\vspace{.5ex} \label{sec:introduction}

Packing and covering are two fundamental concepts in
combinatorics. Tiling is a concept which combines both packing and
covering and hence it attracts a substantial interest. Tiling of
the Euclidian space with specific shapes is one of the main
interest in this respect. Two of the shapes in this context are
the semicross and the cross. A $(k,n)$-\emph{semicross} is an
$n$-dimensional shape whose center is an $n$-dimensional unit cube
from which $n$ \emph{arms} consisting of $k$ $n$-dimensional unit
cubes are spanned in the $n$ positive directions. A
$(k,n)$-\emph{cross} is an $n$-dimensional shape whose center is
an $n$-dimensional unit cube from which $2n$ \emph{arms}
consisting of $k$ $n$-dimensional unit cubes are spanned in the
$n$ directions (one for the positive and one for the negative).
Examples of a $(2,3)$-cross and a $(2,3)$-semicross are given
in Figure~\ref{fig:CrossSemi}. Packing and tiling with semicrosses and crosses is a well
studied topic (see~\cite{Stein86,SteinSzabo} and
references therein).

\vspace{1.5cm}

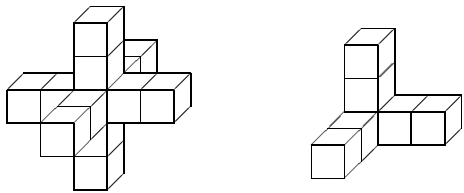
\begin{figure}[htbp]


\setlength{\unitlength}{0.4mm}
\begin{picture}(0,0)(-200,10)
\resizebox{!}{20mm}{
\linethickness{.4 pt}
\put(50,0){\line(1,0){20}} \put(70,0){\line(0,1){20}}
\put(50,20){\line(1,0){20}} \put(50,0){\line(0,1){20}}
\put(50,20){\line(1,1){20}} \put(70,20){\line(1,1){20}}
\put(70,0){\line(1,1){20}} \put(60,30){\line(1,0){20}}
\put(80,30){\line(0,-1){20}} \put(90,40){\line(0,-1){20}}
\put(70,40){\line(1,0){60}}
\put(90,40){\line(0,1){40}} \put(70,40){\line(0,1){40}}
\put(90,40){\line(1,1){10}} \put(100,50){\line(0,1){40}}
\put(90,80){\line(1,1){10}} \put(90,80){\line(-1,0){20}}
\put(70,80){\line(1,1){10}} \put(80,90){\line(1,0){20}}
\put(100,50){\line(1,0){40}} \put(90,20){\line(1,0){40}}
\put(130,20){\line(0,1){20}} \put(130,20){\line(1,1){10}}
\put(140,30){\line(0,1){20}} \put(110,20){\line(0,1){20}}
\put(110,40){\line(1,1){10}} \put(130,40){\line(1,1){10}}
\put(70,60){\line(1,0){20}} \put(90,60){\line(1,1){10}}
}
\end{picture}

\setlength{\unitlength}{.4mm}
\begin{picture}(0,0)(-110,-10)
\resizebox{!}{20mm}{
\linethickness{.4 pt}
\put(50,0){\line(1,0){20}} \put(70,0){\line(0,1){20}}
\put(50,20){\line(1,0){20}} \put(50,0){\line(0,1){40}}
\put(50,20){\line(1,1){20}}
\put(70,20){\line(1,1){20}}
\put(70,0){\line(1,1){20}}
\put(60,30){\line(1,0){20}}
\put(80,30){\line(0,-1){20}}
\put(90,40){\line(0,-1){20}}
\put(70,40){\line(1,0){60}}
\put(70,40){\line(-1,0){40}}
\put(30,40){\line(0,-1){20}}
\put(30,40){\line(1,1){10}}
\put(30,20){\line(1,0){20}}
\put(90,40){\line(0,1){40}}
\put(70,40){\line(0,1){40}}
\put(40,50){\line(1,0){30}}
\put(50,40){\line(1,1){10}}
\put(90,40){\line(1,1){30}}
\put(100,50){\line(0,1){40}}
\put(90,80){\line(1,1){10}}
\put(90,80){\line(-1,0){20}}
\put(70,80){\line(1,1){10}}
\put(80,90){\line(1,0){20}}
\put(120,70){\line(-1,0){20}}
\put(100,50){\line(1,0){40}}
\put(90,20){\line(1,0){40}}
\put(90,20){\line(0,-1){40}}
\put(90,-20){\line(-1,0){20}}
\put(70,-20){\line(0,1){20}}
\put(70,0){\line(1,0){20}}
\put(90,-20){\line(1,1){10}}
\put(100,-10){\line(0,1){30}}
\put(90,0){\line(1,1){10}}
\put(130,20){\line(0,1){20}}
\put(130,20){\line(1,1){10}}
\put(140,30){\line(0,1){20}}
\put(110,20){\line(0,1){20}}
\put(110,40){\line(1,1){10}}
\put(130,40){\line(1,1){10}}
\put(120,70){\line(0,-1){20}}
\put(110,60){\line(-1,0){10}}
\put(110,60){\line(0,-1){10}}
\put(70,60){\line(1,0){20}}
\put(90,60){\line(1,1){10}}
}
\end{picture}

\caption{A $(2,3)$-cross and a $(2,3)$-semicross.}

\label{fig:CrossSemi}

\end{figure}

As mentioned in~\cite{SteinSzabo} the origins of the study of the
cross and semicross are in several independent
sources~\cite{GoWe,Kar66,Stein67,Ulr57}, some of which are pure
mathematics and some are connected to coding theory. Semicross and
cross are two types of ``error spheres" as explained
in~\cite{Gol69}. Golomb and Welch~\cite{GoWe} proved that the
$(1,n)$-cross tiles the $n$-dimensional Euclidian space for all $n
\geq 1$. Such a tiling is a perfect code in the Manhattan metric
and if the tiling is periodic then it is also a perfect code in
the Lee metric. Their work inspired future work (see \cite{Etz11}
and references therein) on perfect codes in the Lee (and Manhattan)
metric.

As said before, packing and tiling with semicrosses and crosses
are well studied
topics~\cite{Charlebois,EH,GoWe,HS74,HS84,Loomis,Stein67,Stein72,Stein84,Szabo81,Szabo83,Szabo85}.
The results in these research works include bounds on the size of
the arms, constructions for such packings and tilings, parameters
for which such tilings cannot exist, lattice and non-lattice
tilings, etc. Recently, the topic has gained a new interest since
the $(k,n)$-semicross is the error sphere of the \emph{asymmetric
error model} associated with flash memories~\cite{CSB,KBE}, the
most advanced type of storage currently used.
Schwartz~\cite{Sch11} investigated lattice tilings with
generalized crosses and semicrosses in the connection of
\emph{unbalanced limited magnitude error model} for multi level
flash memories.

Not much is known about tiling of crosses with arms which are not
of integer length. Moreover, most tilings considered in the
literature are integer lattice tilings. In this paper we study the
existence of tiling of the $n$-dimensional Euclidian space with a
$(0.5,n)$-cross. The $(0.5,n)$-cross consists of one complete
(non-fractional) unit cube and $2n$ halves unit cubes. Usually, it
is more convenient to handle tiling with complete unit cubes.
Hence, we scale the $(0.5,n)$-cross by two to obtain a new shape,
which will be denoted in the sequel by $\Upsilon_n$. The shape $\Upsilon_n$
consists of $2^n (n+1)$ complete unit cubes. For the shape $\Upsilon_n$ we
will discuss only integer tiling (also known as $\Z$-tiling) which
is a tiling in which the centers of the unit cubes are placed on
points of $\Z^n$. We prove that such a tiling exists if and only
if $n=2^t-1$ or $n=3^t-1$, where $t > 0$. The related tiling with a
$(0.5,n)$-cross (obtained after scaling by $0.5$) will be called a
$\begin{tiny} \frac{1}{2}
\end{tiny} \Z$-tiling. We present an analysis of the structure
obtained from such a tiling. The tiling which is considered for
the $(0.5,n)$-cross is usually not an integer tiling. Moreover, we
discuss general tilings and not just lattice tilings as done in
most literature.

Dejter~\cite{Dej11} has brought to our attention that a tiling with
$\Upsilon_n$ is a perfect dominating set in~$\Z^n$. This problem
was considered by several authors, e.g.~\cite{ADH,Wei94} and
references therein. The problem that
we consider in this paper is one of the main open problems on this topic.

The rest of this paper is organized as follows. In
Section~\ref{sec:basic} we present the basic concepts used
throughout this paper. We define what is a tiling, a lattice
tiling, an integer tiling, and a periodic tiling. We discuss how
to handle a tiling with a $(0.5,n)$-cross. We discuss three
distance measures which are used in our discussion, the well known
Hamming distance, the Manhattan distance which is used for codes
in $\Z^n$, and a new distance measure needed for the
$(0.5,n)$-crosses, the cross distance. We also discuss how a
tiling with a $(0.5,n)$-cross can be analyzed. In
Section~\ref{sec:nonexistence} we make an analysis of a tiling
with a $(0.5,n)$-cross and prove that such a $\begin{tiny}
\frac{1}{2} \end{tiny} \Z$-tiling can exist only if $n=2^t-1$ or
$n=3^t-1$, where $t > 0$. Some necessary conditions for the existence of
such a tiling are given. In Section~\ref{sec:perfect} we show how
we can construct a tiling with a $(0.5,n)$-cross from a binary
perfect single-error-correcting code of length $n=2^t-1$ and
vice-versa. Finally, we show how to construct a tiling with a
$(0.5,n)$-cross from a ternary perfect single-error-correcting
code of length $\frac{n}{2}=\frac{3^t-1}{2}$.

\vspace{1.25ex}
\section{Basic Concepts}
\vspace{-.25ex} \label{sec:basic}

For two vectors $X=(x_1 , x_2 , \ldots ,
x_n), Y=(y_1,y_2 , \ldots ,y_n) \in \R^n$ and a scalar $\alpha \in
\R$, the \emph{vector addition} $X+Y$ is defined by $X+Y \deff (x_1 + y_1 ,
x_2 + y_2 , \ldots , x_n + y_n )$ and the \emph{scalar
multiplication} is defined by $\alpha X
\deff (\alpha x_1 , \alpha x_2 , \ldots , \alpha x_n)$.
For two sets $\cS_1 \subseteq \R^n$ and $\cS_2 \subseteq \R^n$
the \emph{set addition} $\cS_1 + \cS_2$ is defined by
$\cS_1 + \cS_2 \deff \{ X+Y ~:~ X
\in \cS_1 ,~ Y \in \cS_2 \}$.
For a set $\cS \in \R^n$ and a vector $U \in \R^n$ the
\emph{translation} of $\cS$ by $U$ is $U + \cS \deff \{ U + X ~:~ X \in
\cS \}$. The \emph{multiplication} of $\cS$ by a scalar $\alpha
\in \R$ is defined by $\alpha \cS \deff \{ \alpha X ~:~ X \in \cS
\}$.

Let $\cS$ be an $n$-dimensional shape in the $n$-dimensional
Euclidian space ($\R^n$). We say that two translations of $\cS$, $\cS_1$
and $\cS_2$, are \emph{disjoint} if their intersection is
contained in at most an $(n-1)$-dimensional space. A \emph{tiling}
$\cT$ of the $n$-dimensional Euclidian space with the shape $\cS$
is a set of disjoint translations of $\cS$ such that each point
$(x_1,x_2 , \ldots , x_n ) \in \R^n$ is contained in at least one
translation of $\cS$.
In the sequel a tiling $\cT$ will be defined by a set of points
$\T$ in $\R^n$ and a shape $\cS$. The point $X \in \T$ if and
only if the translation
$X+\cS \in \cT$. Henceforth, $\T$ will be called
a tiling if the shape $\cS$ is known. For example, the set of
points $\T = \{ (i,i+5j) ~:~ i,j \in \Z \}$ and the
$(2,2)$-semicross defines a tiling of $\R^2$ (part of the tiling
is presented in Figure~\ref{fig:semicrossTile}). A tiling $\T$
with a shape $\cS$ is called an \emph{integer tiling} (also a
$\Z$-\emph{tiling}) if $\T \subseteq \Z^n$. An $n$-dimensional
\emph{unit cube} centered at a point $(c_1 , c_2 , \ldots , c_n )
\in \R^n$ consists of the points $\{ (x_1, x_2 , \ldots , x_n )
~:~ | x_i - c_i | \leq 0.5, ~ 1 \leq i \leq n \}$. The
$n$-dimensional shape $\cS$ is a \emph{discrete shape} if $\cS$ is
a union of $n$-dimensional unit cubes, whose centers are in
$\Z^n$. Hence, a discrete $n$-dimensional shape $\cS$ can be
defined by a set of points from $\Z^n$. Therefore, in an integer
tiling with a discrete shape $\cS$, each point of $\Z^n$ is
contained in exactly one translation of $\cS$.

\begin{figure}[htbp]


\setlength{\unitlength}{0.3mm}
\begin{picture}(0,0)(-240,0)
\resizebox{!}{0.2mm} { \linethickness{.4 pt}

\put(-20,0){\line(1,0){160}} \put(-20,0){\line(0,-1){160}}
\put(0,0){\line(0,-1){160}} \put(20,0){\line(0,-1){160}}
\put(40,0){\line(0,-1){160}} \put(60,0){\line(0,-1){160}}
\put(80,0){\line(0,-1){160}} \put(100,0){\line(0,-1){160}}
\put(120,0){\line(0,-1){160}} \put(140,0){\line(0,-1){160}}
\put(-20,-20){\line(1,0){160}} \put(-20,-40){\line(1,0){160}}
\put(-20,-60){\line(1,0){160}} \put(-20,-80){\line(1,0){160}}
\put(-20,-100){\line(1,0){160}} \put(-20,-120){\line(1,0){160}}
\put(-20,-140){\line(1,0){160}} \put(-20,-160){\line(1,0){160}}

\linethickness{2.0 pt}

\put(-20,-160){\line(1,0){60}} \put(-20,-160){\line(0,1){60}}
\put(-20,-100){\line(1,0){20}} \put(40,-160){\line(0,1){20}}
\put(40,-140){\line(-1,0){40}} \put(0,-100){\line(0,-1){40}}
\put(40,-140){\line(1,0){20}} \put(60,-140){\line(0,1){20}}
\put(60,-120){\line(-1,0){40}} \put(20,-120){\line(0,1){40}}
\put(20,-80){\line(-1,0){20}} \put(0,-80){\line(0,-1){20}}

\put(60,-160){\line(0,1){20}} \put(60,-120){\line(1,0){20}}
\put(80,-120){\line(0,-1){40}} \put(80,-120){\line(0,1){20}}
\put(80,-100){\line(-1,0){40}} \put(40,-100){\line(0,1){40}}
\put(40,-60){\line(-1,0){20}} \put(20,-60){\line(0,-1){20}}

\put(80,-100){\line(1,0){20}} \put(100,-100){\line(0,-1){40}}
\put(100,-140){\line(1,0){40}} \put(140,-140){\line(0,-1){20}}
\put(140,-160){\line(-1,0){60}}

\put(-20,-80){\line(1,0){20}} \put(-20,-60){\line(1,0){40}}
\put(-20,-60){\line(0,1){60}} \put(-20,0){\line(1,0){20}}
\put(0,0){\line(0,-1){40}} \put(0,-40){\line(1,0){60}}
\put(40,-40){\line(0,-1){20}} \put(20,0){\line(0,-1){20}}
\put(20,-20){\line(1,0){40}} \put(60,-20){\line(0,-1){20}}
\put(60,-40){\line(-1,0){20}} \put(60,-40){\line(0,-1){40}}
\put(60,-80){\line(1,0){40}} \put(100,-80){\line(0,-1){20}}
\put(100,-80){\line(1,0){20}} \put(120,-60){\line(0,-1){60}}
\put(140,-60){\line(-1,0){60}} \put(80,-60){\line(0,1){60}}
\put(80,-20){\line(-1,0){20}} \put(100,0){\line(-1,0){60}}
\put(100,0){\line(0,-1){40}} \put(100,-40){\line(1,0){40}}
\put(140,-40){\line(0,-1){60}} \put(120,0){\line(0,-1){20}}
\put(120,-20){\line(1,0){20}} \put(120,-120){\line(1,0){20}}


}
\end{picture}
\vspace{3.0cm} \caption{Tiling of $\R^2$ with a (2,2)-semicross.}
\label{fig:semicrossTile}
\end{figure}
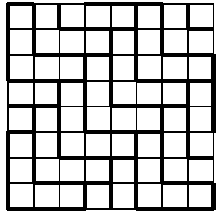

\begin{lemma}
\label{lem:shiftT} If $\T$ is a tiling with a shape $\cS$ and $U
\in \R^n$ then $U+\T$ is also a tiling with $\cS$.
\end{lemma}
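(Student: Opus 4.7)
The plan is to verify directly from the definitions that $U+\T$ satisfies the two defining conditions of a tiling: pairwise disjointness of the translates of $\cS$ indexed by $U+\T$, and coverage of every point of $\R^n$. Both conditions follow from the observation that translation by $U$ is a bijection of $\R^n$ onto itself which commutes with the Minkowski sum defining the shape $\cS$, namely $(U+X)+\cS = U+(X+\cS)$ for every $X\in\R^n$.

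First I would check disjointness. Let $X_1,X_2\in\T$ be distinct. Since $\T$ is a tiling with $\cS$, the sets $X_1+\cS$ and $X_2+\cS$ intersect in at most an $(n-1)$-dimensional subset. Applying the bijective translation by $U$ to both sides, the sets $U+(X_1+\cS)$ and $U+(X_2+\cS)$ also intersect in at most an $(n-1)$-dimensional subset, since translation by $U$ preserves dimension of intersections. Rewriting these as $(U+X_1)+\cS$ and $(U+X_2)+\cS$ yields the required disjointness condition for the translates of $\cS$ indexed by $U+\T$.

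Next I would check coverage. Let $Y\in\R^n$ be arbitrary. Then $Y-U\in\R^n$, so by the covering property of the tiling $\T$ there exists $X\in\T$ with $Y-U\in X+\cS$. Adding $U$ to both sides gives $Y\in U+X+\cS = (U+X)+\cS$, where $U+X\in U+\T$. Hence every point of $\R^n$ is contained in some translation of $\cS$ from the collection indexed by $U+\T$.

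There is no real obstacle here; the lemma is essentially a bookkeeping statement that translation is a symmetry of the tiling problem on $\R^n$. The only thing to be slightly careful about is that ``disjoint'' in this paper allows an $(n-1)$-dimensional overlap, so one must invoke the fact that translation preserves the dimension of the intersection rather than simply set-theoretic emptiness — but this is immediate since translation is a homeomorphism (indeed an affine isomorphism) of $\R^n$.
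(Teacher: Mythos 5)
Your proof is correct: you verify both the coverage and the (weak) disjointness conditions by noting that translation by $U$ is a bijection of $\R^n$ preserving the dimension of intersections, which is exactly the routine argument intended here. The paper states this lemma without proof as an immediate consequence of the definitions, so your write-up simply supplies the standard details.
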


By Lemma~\ref{lem:shiftT} we can assume that the origin, denoted
by $\mathbf{0}$, is always a point in the tiling. Therefore, given
a tiling $\T$ we assume without loss of generality that the origin is an element in
$\T$. For a set $\cS \subseteq \R^n$ and a permutation $\sigma$ of
$\{ 1,2,\ldots,n \}$, let $\sigma (\cS)
\deff \{ \sigma (X) ~:~ X \in \cS \}$.

\begin{lemma}
\label{lem:permute} If $\T$ is a tiling with an $n$-dimensional
shape $\cS$ and $\sigma$ is a permutation of $\{ 1,2,\ldots,n \}$
then $\sigma (\T)$ is a tiling with the $n$-dimensional shape
$\sigma (\cS)$.
\end{lemma}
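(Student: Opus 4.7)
The plan is to verify that the coordinate permutation $\sigma$, viewed as a map $\R^n\to\R^n$, is a linear bijection that commutes with the tiling operations (translation, union, and intersection), and therefore transports the tiling structure of $\cS$ onto that of $\sigma(\cS)$.

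First I would observe the single algebraic fact that drives everything: for any $X,Y\in\R^n$,
\[
\sigma(X+Y)\;=\;\sigma(X)+\sigma(Y),
\]
since permuting coordinates commutes with coordinate-wise addition. An immediate consequence, obtained by unfolding the definition $X+\cS=\{X+Z:Z\in\cS\}$, is that for every set $\cS\subseteq\R^n$ and every point $X\in\R^n$,
\[
\sigma(X+\cS)\;=\;\sigma(X)+\sigma(\cS).
\]
Thus the family of translates $\{X+\cS:X\in\T\}$ is carried by $\sigma$ exactly onto the family $\{\sigma(X)+\sigma(\cS):X\in\T\}=\{Y+\sigma(\cS):Y\in\sigma(\T)\}$.

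Next I would use that $\sigma$ is a bijection of $\R^n$ onto itself which preserves dimension of subsets (it is a linear isomorphism, hence a homeomorphism). Covering transfers directly: given any point $P\in\R^n$, the point $\sigma^{-1}(P)$ lies in some translate $X+\cS$ with $X\in\T$, so $P=\sigma(\sigma^{-1}(P))$ lies in $\sigma(X+\cS)=\sigma(X)+\sigma(\cS)$, giving coverage by the translates of $\sigma(\cS)$ over $\sigma(\T)$. Disjointness transfers as well: for distinct $X,X'\in\T$ the intersection $(X+\cS)\cap(X'+\cS)$ is contained in an at-most $(n-1)$-dimensional subset of $\R^n$, so its image under $\sigma$ is also contained in an at-most $(n-1)$-dimensional subset, and this image equals $(\sigma(X)+\sigma(\cS))\cap(\sigma(X')+\sigma(\cS))$ because $\sigma$ is a bijection.

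There is no real obstacle here; the content is purely bookkeeping. The only point requiring a touch of care is the disjointness clause in the definition of tiling, which allows translates to meet in an $(n-1)$-dimensional set: since $\sigma$ is a linear isomorphism, it preserves the affine dimension of any subset of $\R^n$, so this clause is respected under $\sigma$ and the conclusion follows.
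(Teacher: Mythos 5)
Your proof is correct and is exactly the routine verification the paper intends (the lemma is stated without proof there): $\sigma$ commutes with translation, so it carries the family of translates of $\cS$ onto the translates of $\sigma(\cS)$, and being a linear bijection it preserves both the covering property and the dimension bound on pairwise intersections. Nothing further is needed.
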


The vector $(x_1 , x_2 , \ldots , x_n)$ is called the $r$-\emph{th
unit vector} and will be denoted by $e_r$ if $x_r =1$ and for all
$i \neq r$, $x_i =0$.
A set $\cS$ is called \emph{periodic} with period $p$ if $X \in
\cS$ implies that $X + \alpha \cdot p \cdot e_i \in \cS$, for all $\alpha \in
\Z$ and $1 \leq i \leq n$. A tiling $\T$ with the shape $\cS$ is a
periodic tiling if it is a periodic set. The following simple
lemma is left for the reader.

\begin{lemma}
A tiling $\T$ is a periodic with period $p$ if and only if $X \in
\T$ implies that $X + p \cdot e_i \in \T$ for all $i$, $1 \leq i \leq
n$.
\end{lemma}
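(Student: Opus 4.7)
The plan is to split the equivalence into its two directions. The forward implication is immediate: by the definition of a periodic set with period $p$, $X \in \T$ forces $X + \alpha \cdot p \cdot e_i \in \T$ for \emph{every} $\alpha \in \Z$, so specialising to $\alpha = 1$ gives the stated one-step closure for each $i$.

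For the converse, assume that $X \in \T$ implies $X + p \cdot e_i \in \T$ for every $i$, and fix $i$. A straightforward induction on $\alpha \geq 0$ upgrades the hypothesis to $X + \alpha \cdot p \cdot e_i \in \T$ for all non-negative $\alpha$: the base case $\alpha = 0$ is trivial, and the inductive step applies the hypothesis to $X + (\alpha-1) \cdot p \cdot e_i \in \T$. The genuine difficulty is the negative side of the orbit, since the hypothesis only provides translations in the $+e_i$ direction and gives no direct handle on $X - p \cdot e_i$.

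To close this gap I would exploit the tiling structure, which has not yet been used. Combining the hypothesis with the induction above yields $\T + p \cdot e_i \subseteq \T$; by Lref{lem:shiftT}, $\T + p \cdot e_i$ is itself a tiling of $\R^n$ with the same shape $\cS$. It therefore suffices to establish the auxiliary fact that two tilings of $\R^n$ by the same shape cannot be properly nested, i.e.\ if $\T_1 \subseteq \T_2$ and both are tilings with shape $\cS$, then $\T_1 = \T_2$. Granted this, $\T + p \cdot e_i = \T$, which rearranges to $\T - p \cdot e_i = \T$, and a second induction (this time on $-\alpha \geq 0$) completes the proof.

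The main obstacle is the nesting claim, which I would prove by contradiction: if some $X \in \T_2 \setminus \T_1$ exists, then the interior of $X + \cS$ is non-empty and (since $\T_1$ is a tiling) must be met by some $Y + \cS$ with $Y \in \T_1 \subseteq \T_2$; because $X \notin \T_1$ we have $Y \neq X$, so the translations $X + \cS$ and $Y + \cS$ are two distinct members of $\T_2$ whose intersection contains interior points, violating the disjointness requirement in the tiling $\T_2$. This is the only step that uses anything beyond bookkeeping, and it is precisely the point where the tiling (as opposed to merely packing) hypothesis is essential.
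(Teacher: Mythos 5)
Your proof is correct, and since the paper explicitly leaves this lemma to the reader there is no proof of record to compare with; your route (the forward direction by specialising $\alpha=1$, the converse by showing $\T+p\cdot e_i\subseteq\T$, invoking \Lref{lem:shiftT}, and ruling out a proper nesting of two tilings with the same shape) is exactly the natural intended argument, and it correctly identifies that the tiling hypothesis is what yields the negative translates. The only step worth tightening is the final contradiction: the paper's notion of disjointness allows intersections contained in an $(n-1)$-dimensional space, and your chosen point $z$, interior to $X+\cS$, might a priori lie only on the boundary of $Y+\cS$, which by itself does not contradict that definition. This is repaired in one line: since $\cS$ (a cross, or a finite union of unit cubes) is the closure of its interior, every neighbourhood of $z$ contains interior points of $Y+\cS$, and those lying in the small ball around $z$ inside $X+\cS$ give an open subset of $(X+\cS)\cap(Y+\cS)$, so the intersection is genuinely $n$-dimensional and the disjointness in $\T_2$ is violated as you claim.
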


A \emph{lattice} $\Lambda$ is a discrete, additive subgroup of the
real $n$-space $\R^n$,
\begin{equation*}
\Lambda \deff \{ u_1 v_1 + u_2v_2 + \cdots + u_n v_n ~:~ u_1,u_2,
\cdots,u_n \in \Z \}
\end{equation*}
where $\{ v_1,v_2,\ldots,v_n \}$ is a set of linearly independent
vectors in $\R^n$, i.e. the lattice has rank~$n$.
The set of vectors $\{ v_1,v_2,\ldots,v_n\}$ is
called \emph{the basis} for $\Lambda$, and the $n \times n$ matrix
$$
{\bf G} \deff \left[\begin{array}{cccc}
v_{11} & v_{12} & \ldots & v_{1n} \\
v_{21} & v_{22} & \ldots & v_{2n} \\
\vdots & \vdots & \ddots & \vdots\\
v_{n1} & v_{n2} & \ldots & v_{nn} \end{array}\right]
$$
having these vectors as its rows is said to be the \emph{generator
matrix} for $\Lambda$.

The {\it volume} of a lattice $\Lambda$, denoted by $V( \Lambda
)$, is inversely proportional to the number of lattice points per
a unit volume. More precisely, $V( \Lambda )$ may be defined as
the volume of the {\it fundamental parallelogram} $\Pi(\Lambda)$,
which is given by
$$
\Pi(\Lambda) \deff\ \{ \xi_1 v_1 + \xi_2 v_2 + \cdots + \xi_n
v_n~:~ 0 \leq \xi_i < 1,~ 1 \leq i \leq n \} ~.
$$
There is a simple expression for the volume of $\Lambda$, namely,
$V(\Lambda)=| \det {\bf G} |$.

A lattice $\Lambda$ is a \emph{lattice tiling} with $\cS$ if $\T
\deff \Lambda$ forms a tiling with $\cS$. A lattice tiling $\Lambda$ is an
\emph{integer lattice tiling} with $\cS$ if all entries of ${\bf
G}$ are integers. The following lemma is well known.

\begin{lemma}
If $\Lambda$ defines a lattice tiling with
the shape $\cS$ then $V(\Lambda)=|\cS|$, where
$|\cS|$ denote the volume of $\cS$.
\end{lemma}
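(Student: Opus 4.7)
The plan is to show that both the fundamental parallelepiped $\Pi(\Lambda)$ and the shape $\cS$ serve as fundamental domains for the translation action of $\Lambda$ on $\R^n$, and then invoke the standard fact that any two fundamental domains have equal Lebesgue measure. The cleanest way to carry this out is via the indicator function $\chi_\cS$ of $\cS$.

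First I would record what the tiling hypothesis gives. Because $\Lambda$ is a lattice tiling, every point of $\R^n$ lies in at least one translate $\lambda + \cS$, and any two distinct translates meet in a set of dimension at most $n-1$, hence of Lebesgue measure zero. Consequently, for almost every $x \in \R^n$,
\begin{equation*}
\sum_{\lambda \in \Lambda} \chi_\cS(x - \lambda) = 1.
\end{equation*}
Simultaneously, the fundamental parallelepiped $\Pi(\Lambda)$ tiles $\R^n$ under translation by $\Lambda$, and its Lebesgue measure equals $V(\Lambda) = |\det \bG|$ by definition.

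Next I would integrate the identity above over $\Pi(\Lambda)$ and use the monotone (or Fubini--Tonelli) interchange of sum and integral, which is justified since all summands are nonnegative:
\begin{equation*}
V(\Lambda) \; = \; \int_{\Pi(\Lambda)} 1\, dx \; = \; \int_{\Pi(\Lambda)} \sum_{\lambda \in \Lambda} \chi_\cS(x-\lambda)\, dx \; = \; \sum_{\lambda \in \Lambda} \int_{\Pi(\Lambda) - \lambda} \chi_\cS(y)\, dy.
\end{equation*}
Since the translates $\Pi(\Lambda) - \lambda$ with $\lambda \in \Lambda$ also tile $\R^n$ (again with pairwise overlaps of measure zero), the sum on the right collapses to $\int_{\R^n} \chi_\cS(y)\, dy = |\cS|$, giving $V(\Lambda) = |\cS|$.

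There is no real obstacle; the only mild point to be careful about is the measure-zero overlap condition, which is exactly what the paper's definition of ``disjoint'' (intersection contained in an $(n-1)$-dimensional space) is designed to guarantee, so both the covering identity for $\chi_\cS$ and the tiling by translates of $\Pi(\Lambda)$ hold almost everywhere, which is all that the integration argument requires.
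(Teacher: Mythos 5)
Your proof is correct. The paper itself states this lemma without proof (it is cited as ``well known''), so there is nothing to compare it against; your argument --- summing the covering identity $\sum_{\lambda\in\Lambda}\chi_{\cS}(x-\lambda)=1$ a.e.\ (valid since the pairwise overlaps are $(n-1)$-dimensional, hence null, and $\Lambda$ is countable), integrating over $\Pi(\Lambda)$, and unfolding via Tonelli and the exact partition of $\R^n$ by the half-open translates $\Pi(\Lambda)-\lambda$ --- is the standard complete justification of exactly this fact.
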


A {\it code} $\cC$ of length $n$ over $\Z_q$ (respectively $\Z$) is a subset of
$\Z_q^n$ (respectively $\Z^n$). Let $\Lambda_n$ be the lattice generated by the
basis $\{ q \cdot e_i ~:~ 1 \leq i \leq n \}$. A code $\cC
\subseteq \Z_q^n$ can be viewed also as a subset of $\Z^n$. The
code $E(\cC)=\cC + \Lambda_n$ is the \emph{expanded code} of
$\cC$. If $E(\cC)$ is a tiling of $\Z^n$ with the shape $\cS$ then
we also call $\cC$ a tiling of $\Z_q^n$ with the shape $\cS$. A
tiling $\T \subseteq \Z^n$ with a period $p$ can be viewed as an
expanded code $E(\cC)$ of a code $\cC$ of length $n$ over
$\Z_p$, where $\cC = \T \cap \{ 0,1, \ldots , p-1 \}^n$. In
the sequel we denote
the set of integers in $\Z_p$ without the group structure by
$\tilde{\Z}_p \deff \{ 0,1, \ldots , p-1 \}$;
we will also refer to $\T$ as a code and to its elements as
codewords.

To handle a tiling with a $(0.5,n)$-cross we will need to use
three distance measures, the well known Hamming distance, the
Manhattan distance, and the new defined cross distance.

For every two given words $X,Y \in \Z_q^n$ the \emph{Hamming
distance} $d_H (X,Y)$ is the number of positions in which $X$ and
$Y$ differ, i.e.
$$
d_H (X,Y) \deff | \{ i ~:~ x_i \neq y_i, ~ 1 \leq i \leq n  \} |~.
$$


For every two given points $X,Y \in \Z^n$ the \emph{Manhattan
distance} $d_M (X,Y)$ is defined by
$$
d_M (X,Y) \deff \sum_{i=1}^n | x_i - y_i |~.
$$

For every two given points $X,Y \in \Z^n$ we defined the \emph{cross
distance} $d_C (X,Y)$ as follows
$$
d_C (X,Y) \deff \sum_{i=1}^n \max \{ 0, | y_i - x_i | -1  \}.
$$

\begin{remark}
The cross distance can be generalized for two points $X,Y \in
\R^n$. We will use this generalization only in this section.
\end{remark}

\begin{remark}
The Hamming distance is an association scheme, while the Manhattan distance is
only a metric distance and not an association scheme (see~\cite{McSl77} for the
definition of an association scheme). The cross distance is not a metric, but
it will be most important in the discussion of tilings with a
$(0.5,n)$-cross.
\end{remark}

For each distance measure we defined the \emph{weight} of a point
(word) $X$ to be the distance between $X$ and the point ${\bf 0}$.
The cross weight of a point $X$ will be denoted by $w_C (X)$.

A unit cube centered at $(c_1,c_2,\ldots,c_n) \in \R^n$ is a union
of two disjoint half unit cubes in one of the $n$ directions. For
the $r$-th direction one \emph{half unit cube} is defined by the
set of points $\{ (x_1, x_2 , \ldots , x_n ) ~:~ 0 \leq x_r -c_r
\leq 0.5,~ | x_i - c_i | \leq 0.5,~ 1 \leq i \leq n, ~ i \neq r
\}$ and a second \emph{half unit cube} is defined by the set of
points $\{ (x_1, x_2 , \ldots , x_n ) ~:~ -0.5 \leq x_r -c_r \leq
0,$ $ | x_i - c_i | \leq 0.5,~ 1 \leq i \leq n, ~ i \neq r \}$. A
$(0.5,n)$-cross is a unit cube to which two half unit cubes are
attached in the $r$-th direction for each $1 \leq r \leq n$, one
in its negative direction and one in its positive direction. It is
more convenient to handle shapes with complete unit cubes
(discrete shapes) and therefore we will scale the $(0.5,n)$-cross
by two to obtain a new shape which will be called $\Upsilon_n$. An
example of a $(0.5,3)$-cross and an $\Upsilon_3$ is given in
Figure~\ref{fig:HalfUps}. The complete unit cube in the
$(0.5,n)$-cross is transferred into an $n$-dimensional cube with
sides of length two in $\Upsilon_n$. This cube in $\Upsilon_n$
will be called the \emph{core} of $\Upsilon_n$; the core consists
of $2^n$ unit cubes. In the sequel we will be interested only in
integer tilings with $\Upsilon_n$. In such an integer tiling
$\Upsilon_n$ can be represented by $2^n (n+1)$ points of $\Z^n$
which are the centers of its $2^n (n+1)$ unit cubes.
Let $\T$ be a tiling with $\Upsilon_n$. We assume that if
$X=(x_1,x_2,\ldots,x_n) \in \T$  then the set
$\{ (c_1,c_2,\ldots,c_n) ~:~ c_i \in \{ x_i -1 ,x_i \},~ 1 \leq i
\leq n \}$ is the related core of the
translation $X+ \Upsilon_n$. The core of $\Upsilon_n$ is
$\{ -1,0 \}^n$ and
$\Upsilon_n \deff \{ U ~:~ d_M (X,U)=1 ,~ X \in \{ -1,0 \}^n \}$.
Even so we represent $\Upsilon_n$ as a set of points in $\Z^n$,
the integer tiling which we discussed is also
for the shape in the Euclidian space.
If $\T$ is a tiling with
$\Upsilon_n$ then $0.5 \T$ is a tiling with a $(0.5,n)$-cross.
Clearly, if for each $(x_1 , x_2 , \ldots , x_n) \in \T$, $x_i$ is
even for all $1 \leq i \leq n$, then also $0.5 \T$ is an integer
tiling. However, if there exists a point $(x_1 , x_2 , \ldots ,
x_n) \in \T$ such that for at least one $j$ we have that $x_j$ is
odd then $0.5 \T$ is not an integer tiling. To this end we define
a $\begin{tiny} \frac{1}{2}
\end{tiny} \Z$-tiling. A tiling $\T$ is a $\begin{tiny}
\frac{1}{2}
\end{tiny} \Z$-\emph{tiling} if $\T \subseteq 0.5 \Z^n$.

\vspace{2cm}

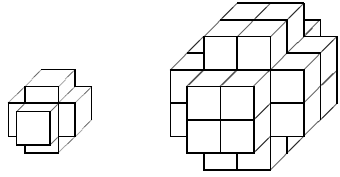
\begin{figure}[htbp]

\setlength{\unitlength}{.5mm}
\begin{picture}(-100,-100)(-115,0)
\resizebox{!}{10mm}{
\linethickness{.4 pt}
\put(50,0){\line(1,0){32}} \put(82,0){\line(0,1){32}}
\put(50,32){\line(1,0){32}} \put(50,0){\line(0,1){32}}
\put(50,32){\line(1,1){8}}
\put(58,40){\line(1,0){48}}
\put(82,32){\line(1,1){8}}
\put(82,0){\line(1,1){8}}
\put(90,8){\line(0,1){48}}
\put(90,8){\line(1,0){16}}
\put(106,8){\line(0,1){32}}
\put(106,8){\line(1,1){16}}
\put(106,40){\line(1,1){16}}
\put(90,40){\line(1,1){16}}
\put(90,56){\line(1,1){16}}
\put(58,56){\line(1,1){16}}
\put(58,56){\line(1,0){32}}
\put(58,56){\line(0,-1){16}}
\put(58,40){\line(-1,0){16}}
\put(42,40){\line(0,-1){32}}
\put(42,40){\line(1,1){16}}
\put(42,8){\line(1,0){8}}
\put(74,72){\line(1,0){32}}
\put(106,56){\line(1,0){16}}
\put(106,56){\line(0,1){16}}
\put(122,56){\line(0,-1){32}}
\put(90,8){\line(0,-1){16}}
\put(90,-8){\line(-1,0){32}}
\put(58,-8){\line(0,1){8}}
\put(90,-8){\line(1,1){16}}
}
\end{picture}

\setlength{\unitlength}{.4mm}
\begin{picture}(-100,-100)(-190,-10)
\resizebox{!}{20mm}{
\linethickness{.4 pt}
\put(50,0){\line(1,0){40}} \put(90,0){\line(0,1){40}}
\put(50,40){\line(1,0){40}} \put(50,0){\line(0,1){40}}
\put(50,40){\line(1,1){10}}
\put(90,40){\line(1,1){40}}
\put(90,0){\line(1,1){10}}
\put(100,10){\line(0,1){60}}
\put(100,70){\line(1,1){20}}
\put(60,70){\line(1,1){20}}
\put(80,90){\line(1,0){40}}
\put(60,70){\line(1,0){40}}
\put(60,50){\line(1,0){60}}
\put(60,50){\line(0,1){20}}
\put(120,70){\line(0,1){20}}
\put(60,50){\line(-1,0){20}}
\put(120,50){\line(0,-1){40}}
\put(120,50){\line(1,1){20}}
\put(120,70){\line(1,0){20}}
\put(100,10){\line(1,0){20}}
\put(120,10){\line(1,1){20}}
\put(140,30){\line(0,1){40}}
\put(100,10){\line(0,-1){20}}
\put(100,-10){\line(-1,0){40}}
\put(60,-10){\line(0,1){10}}
\put(100,-10){\line(1,1){20}}
\put(40,50){\line(0,-1){40}}
\put(40,10){\line(1,0){10}}
\put(40,50){\line(1,1){20}}
\put(130,80){\line(-1,0){10}}
\put(130,80){\line(0,-1){10}}

\put(70,0){\line(0,1){40}}
\put(50,20){\line(1,0){40}}
\put(90,20){\line(1,1){10}}
\put(70,40){\line(1,1){10}}
\put(100,30){\line(1,0){20}}
\put(80,50){\line(0,1){20}}
\put(80,70){\line(1,1){20}}
\put(120,30){\line(1,1){20}}
\put(70,80){\line(1,0){40}}
\put(130,20){\line(0,1){40}}
\put(130,60){\line(-1,0){20}}
\put(110,60){\line(0,1){20}}
\put(80,0){\line(0,-1){10}}
\put(50,30){\line(-1,0){10}}
\put(110,10){\line(0,-1){10}}
\put(60,60){\line(-1,0){10}}
}

\end{picture}

\caption{A $(0.5,3)$-cross and an $\Upsilon_3$.}

\label{fig:HalfUps}

\end{figure}

\begin{lemma}
The tiling $\T$ is an integer tiling with $\Upsilon_n$ if and only if $0.5
\T$ is a $\begin{tiny} \frac{1}{2} \end{tiny} \Z$-tiling with a
$(0.5,n)$-cross.
\end{lemma}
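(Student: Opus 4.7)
The plan is to argue this purely by rescaling, exploiting the fact that $\Upsilon_n$ was defined to be the image of the $(0.5,n)$-cross under scaling by a factor of two. The core observation is that for any nonzero scalar $\alpha$, a set of translates $\{X + \cS : X \in \T\}$ is a tiling of $\R^n$ if and only if $\{\alpha X + \alpha\cS : X \in \T\}$ is a tiling of $\R^n$, since the map $Y \mapsto \alpha Y$ is a bijection $\R^n \to \R^n$ that sends disjoint sets to disjoint sets and sends covers to covers. Taking $\alpha = 1/2$ and $\cS = \Upsilon_n$ immediately gives the bijection between tilings with $\Upsilon_n$ and tilings with the $(0.5,n)$-cross, at the level of sets of centers.

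With the tiling property in hand, all that remains is to match the integrality conditions. I would verify the two matching statements: $\T \subseteq \Z^n$ if and only if $0.5\T \subseteq 0.5\Z^n$, which is immediate from the definition of scalar multiplication of sets given in Section~\ref{sec:basic}. Combining this with the scaling equivalence above yields both implications of the lemma:

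\begin{itemize}
\item If $\T$ is an integer tiling with $\Upsilon_n$, then $\T \subseteq \Z^n$ and $\T$ tiles $\R^n$ with $\Upsilon_n$. Scaling by $1/2$, $0.5\T \subseteq 0.5\Z^n$ and $0.5\T$ tiles $\R^n$ with $0.5\Upsilon_n$, which is exactly the $(0.5,n)$-cross. Hence $0.5\T$ is a $\tfrac{1}{2}\Z$-tiling with a $(0.5,n)$-cross.
\item Conversely, if $0.5\T$ is a $\tfrac{1}{2}\Z$-tiling with a $(0.5,n)$-cross, scaling by $2$ gives $\T \subseteq \Z^n$ tiling $\R^n$ with $2 \cdot (0.5,n)\text{-cross} = \Upsilon_n$.
\end{itemize}

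There is no genuine obstacle here; the only thing worth stating cleanly as a preliminary is the scaling invariance of tilings, which is an elementary variant of Lemma~\ref{lem:shiftT} and could either be cited as routine or proved in one line by noting that $Y \in (\alpha X) + \alpha\cS$ iff $Y/\alpha \in X + \cS$. The rest is unpacking definitions.
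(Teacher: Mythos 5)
Your proof is correct and matches the paper's intent: the paper states this lemma without proof precisely because it follows from the definition of $\Upsilon_n$ as the $(0.5,n)$-cross scaled by two, together with the observation that scaling by a nonzero factor carries tilings to tilings and that $\T \subseteq \Z^n$ holds exactly when $0.5\T \subseteq 0.5\Z^n$. Nothing further is needed.
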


Given a set $\T \subset \Z^n$, we would like to know whether $\T$
is a tiling with $\Upsilon_n$. To show that~$\T$ is a tiling we
have to prove
\begin{itemize}
\item [$\boldsymbol{(\cP.1)}$] For each point $Y \in \Z^n$ there
exists a translation $\cS_1$ of $\Upsilon_n$ in the tiling such that
$\cS_1$ contains~$Y$.

\item [$\boldsymbol{(\cP.2)}$] A point $Y \in \Z^n$ is not
contained in more than one translation of $\Upsilon_n$ in the tiling,
i.e. for each two translations $\cS_1$ and $\cS_2$ of $\Upsilon_n$ in
the tiling we have $\cS_1 \cap \cS_2 = \varnothing$.
\end{itemize}

A set $\T \subset \Z^n$ is a \emph{covering} with $\Upsilon_n$ if
it satisfies property $\boldsymbol{(\cP.1)}$ and it is a
\emph{packing} with $\Upsilon_n$ if it satisfies property
$\boldsymbol{(\cP.2)}$. A tiling is clearly both a covering and a
packing.

The following two lemmas are immediate results from the definition
of $\Upsilon_n$.
\begin{lemma}
If $\cS$ is a translation of $\Upsilon_n$ and $X \in \cS$ is not a core
point of $\cS$ then there exists a core point $Y \in \cS$ such
that $d_M (X,Y)=1$.
\end{lemma}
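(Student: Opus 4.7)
My plan is to reduce the problem to the untranslated case and then simply unwind the definition of $\Upsilon_n$. Writing $\cS = U + \Upsilon_n$ for some $U \in \Z^n$, every point of $\cS$ has the form $U + X'$ for a unique $X' \in \Upsilon_n$, and by the preceding convention the core of $\cS$ is $U + \{-1,0\}^n$. Since the Manhattan distance is translation-invariant, it is enough to establish the statement for $\Upsilon_n$ itself, with core $\{-1,0\}^n$: once a core neighbor $Y' \in \{-1,0\}^n$ of $X'$ with $d_M(X',Y')=1$ is produced, the point $Y \deff U + Y'$ will be a core point of $\cS$ with $d_M(X,Y)=1$.

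After this reduction, the lemma is essentially tautological. By definition,
\[
\Upsilon_n = \{\, U ~:~ d_M(X,U)=1,~ X \in \{-1,0\}^n \,\},
\]
which says exactly that every element of $\Upsilon_n$ is at Manhattan distance $1$ from at least one point of $\{-1,0\}^n$. So for any $X' \in \Upsilon_n$ I may pick a witness $Y' \in \{-1,0\}^n$ with $d_M(X',Y')=1$; by construction $Y'$ is a core point and provides the desired neighbor.

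There is essentially no obstacle. The only thing worth flagging is the role of the hypothesis ``not a core point'': because adjacent elements of $\{-1,0\}^n$ are themselves at Manhattan distance $1$, the definition automatically places the core inside $\Upsilon_n$, and without that hypothesis a core point $X$ could in principle be its own witness. Excluding core points ensures that the witness $Y'$ produced by the definition is a genuine core neighbor of $X'$ distinct from $X'$, completing the argument.
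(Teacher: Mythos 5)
Your proof is correct and matches the paper, which states this lemma without proof as an immediate consequence of the definition $\Upsilon_n \deff \{ U ~:~ d_M (X,U)=1 ,~ X \in \{ -1,0 \}^n \}$; your translation reduction plus unwinding of that definition is exactly the intended argument. (Your closing remark about the hypothesis is a harmless aside and slightly off --- a point can never be its own witness since $d_M(X,X)=0$, and in fact the conclusion would hold for core points too, since any core point has another core point at Manhattan distance $1$ when $n\geq 1$ --- but this does not affect the validity of your proof.)
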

\begin{lemma}
If $\cS_1$ and $\cS_2$ are two translations of $\Upsilon_n$ for which
$\cS_1 \cap \cS_2 \neq \varnothing$ then there exists a point $X
\in \cS_1 \cap \cS_2$ which is not in the core of $\cS_1$.
\end{lemma}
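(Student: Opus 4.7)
By \Lref{lem:shiftT} I would translate so that $\cS_1=\Upsilon_n$; then $\cS_2=B+\Upsilon_n$ for some nonzero $B\in\Z^n$, and every element of $\cS_2$ admits a unique representation $X=B+Z$ with $Z\in\Upsilon_n$. The plan is to pick any $X\in\cS_1\cap\cS_2$, observe that if $X$ is not a core point of $\cS_1$ we are already done, and otherwise (when $X\in\{-1,0\}^n$) use the pair $(X,Z)$ to exhibit an explicit arm point $X'$ of $\cS_1$ that still lies in $\cS_2$.

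I would split the construction according to whether $Z$ lies in the core of $\Upsilon_n$ or in an arm. In the first case $B=X-Z$ has every entry in $\{-1,0,1\}$, and since $B\neq\mathbf{0}$ some coordinate $r$ satisfies $B_r\neq 0$; the candidate is $X':=X+\mathrm{sgn}(B_r)\,e_r$. A direct check will confirm that the $r$-th coordinate of $X'$ lies in $\{-2,1\}$ while the remaining coordinates stay in $\{-1,0\}$, so $X'$ is an arm point of $\cS_1$; simultaneously $X'-B=Z+\mathrm{sgn}(B_r)\,e_r$ has its $r$-th coordinate pushed into $\{-1,0\}$, so $X'-B\in\{-1,0\}^n$ and $X'\in\cS_2$.

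In the second case $Z$ has a unique coordinate $Z_r\in\{-2,1\}$ with all other $Z_i\in\{-1,0\}$. For each of the four subcases indexed by $(Z_r,X_r)\in\{-2,1\}\times\{-1,0\}$ I expect to find a unique integer shift $c\in\{\pm 1,\pm 2\}$ such that $X':=X+c\,e_r$ simultaneously satisfies $X'_r\in\{-2,1\}$ (so $X'$ is an arm point of $\cS_1$) and $Z_r+c\in\{-1,0\}$ (so $X'-B\in\{-1,0\}^n$, i.e., $X'$ is a core point of $\cS_2$). Writing out the four values of $c$ explicitly would finish this case.

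The only real obstacle is careful bookkeeping of signs in these subcases; nothing deep is happening. The underlying reason the construction always succeeds is that the arm coordinate $r$ of $Z$ in $\cS_2$ prescribes exactly the coordinate in which one must push $X$ out of the core of $\cS_1$, and the reflection symmetry of $\Upsilon_n$ together with the fact that every arm point of $\Upsilon_n$ is adjacent to a unique core point guarantees that the required push lands inside the core of $\cS_2$.
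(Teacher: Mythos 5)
Your proposal is correct. Note that the paper does not actually prove this lemma at all; it is stated (together with the preceding one) as an ``immediate result from the definition of $\Upsilon_n$,'' so your argument is simply an explicit elaboration of that immediate verification rather than a different route. The construction checks out in both cases: in Case 1 the constraint $B_r=X_r-Z_r\neq 0$ with $X_r,Z_r\in\{-1,0\}$ forces $(X_r,Z_r)=(0,-1)$ or $(-1,0)$, so $X'=X+\mathrm{sgn}(B_r)e_r$ indeed has $X'_r\in\{1,-2\}$ while $Z_r+\mathrm{sgn}(B_r)\in\{0,-1\}$; in Case 2 the four subcases $(Z_r,X_r)=(-2,-1),(-2,0),(1,-1),(1,0)$ are resolved by $c=2,1,-1,-2$ respectively, each giving an arm point of $\cS_1$ that is a core point of $\cS_2$. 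The only cosmetic quibbles are that the reduction to $\cS_1=\Upsilon_n$ is just translation invariance of the claim (citing the tiling lemma is not really needed), and the degenerate case $B=\mathbf{0}$ should be dispatched in one line (any arm point lies in the intersection) since your Case 1 assumes $B\neq\mathbf{0}$.
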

\begin{cor}
If $\cS_1$ and $\cS_2$ are two translations of $\Upsilon_n$ for which
$\cS_1 \cap \cS_2 \neq \varnothing$ then there exist two core
points $X_1 \in \cS_1$ and $X_2 \in \cS_2$ such that $d_M (X_1
,X_2) \leq 2$.
\end{cor}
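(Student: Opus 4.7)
The plan is to obtain the corollary as an immediate consequence of the two preceding lemmas together with the triangle inequality for the Manhattan distance.

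First I would invoke the lemma that guarantees a point $X \in \cS_1 \cap \cS_2$ which is not a core point of $\cS_1$. Then I would apply the other lemma to $\cS_1$ and this point $X$: since $X$ is a non-core point of $\cS_1$, there exists a core point $X_1 \in \cS_1$ with $d_M(X, X_1) = 1$.

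Next I would split into two cases depending on whether $X$ lies in the core of $\cS_2$. In the first case, $X$ itself is a core point of $\cS_2$, so I set $X_2 \deff X$ and obtain $d_M(X_1, X_2) = 1 \leq 2$. In the second case, $X$ is also a non-core point of $\cS_2$, and I apply the preceding lemma once more to produce a core point $X_2 \in \cS_2$ with $d_M(X, X_2) = 1$. Then the Manhattan triangle inequality gives
\begin{equation*}
d_M(X_1, X_2) \leq d_M(X_1, X) + d_M(X, X_2) \leq 1 + 1 = 2,
\end{equation*}
which is the desired conclusion.

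There is essentially no obstacle here: the only subtlety is remembering that the lemma only asserts non-core membership with respect to $\cS_1$, so the case distinction on whether $X$ lies in the core of $\cS_2$ is required. Once that is handled, the statement follows immediately.
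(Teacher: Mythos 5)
Your proof is correct and is exactly the argument the paper intends: the corollary is stated as an immediate consequence of the two preceding lemmas, and your combination of them (non-core intersection point, nearest core point in each translation, Manhattan triangle inequality, with the minor case split when $X$ happens to be a core point of $\cS_2$) is the natural way to spell it out.
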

\begin{lemma}
If $\cS_1$ and $\cS_2$ are two translations of $\Upsilon_n$ for which
there exist two core points $X_1 \in \cS_1$ and $X_2 \in \cS_2$
such that $d_M (X_1 ,X_2) \leq 2$, then $\cS_1 \cap \cS_2 \neq
\varnothing$.
\end{lemma}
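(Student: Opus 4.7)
The plan is to produce, in each admissible configuration of the two prescribed core points, an explicit point $Z$ lying in both translations. Write $\cS_1 = P_1 + \Upsilon_n$ and $\cS_2 = P_2 + \Upsilon_n$, so the hypothesis gives $X_1 = P_1 + A_1$ and $X_2 = P_2 + A_2$ with $A_1, A_2 \in \{-1,0\}^n$, while $\Delta \deff X_2 - X_1$ has $d_M(X_1, X_2) = \sum_i |\Delta_i| \leq 2$. Since $Z \in \cS_k$ is equivalent to $Z - P_k \in \Upsilon_n$, it suffices to exhibit $Z$ with $Z - P_1, Z - P_2 \in \Upsilon_n$.

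The key observation, used throughout, is that for every $A \in \{-1,0\}^n$ and every signed unit vector $\pm e_i$, the vector $A \pm e_i$ lies in $\Upsilon_n$: its $i$-th coordinate becomes a value in $\{-2,-1,0,1\}$ while the remaining coordinates stay in $\{-1,0\}$, so the result is either a core point (all coordinates in $\{-1,0\}$) or an arm point of $\Upsilon_n$ (exactly one coordinate equal to $-2$ or $1$, the rest in $\{-1,0\}$). The case $d_M(X_1,X_2) = 0$ is trivial: $Z = X_1 = X_2$ works since $A_1$ and $A_2$ are themselves core vectors. If $\Delta = \pm e_i$, take $Z = X_1$; then $Z - P_1 = A_1$ is a core point and $Z - P_2 = A_2 \mp e_i \in \Upsilon_n$ by the observation. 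If $\Delta = \pm 2 e_i$, take $Z = X_1 \pm e_i$, so that $Z - P_1 = A_1 \pm e_i$ and $Z - P_2 = A_2 \mp e_i$ both lie in $\Upsilon_n$. Finally, if $\Delta = s_i e_i + s_j e_j$ with $i \neq j$ and $s_i, s_j \in \{\pm 1\}$, take $Z = X_1 + s_i e_i$; then $Z - P_1 = A_1 + s_i e_i$ and $Z - P_2 = A_2 - s_j e_j$ are both in $\Upsilon_n$.

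There is no serious obstacle; the argument is a routine case analysis that exploits the decomposition of $\Upsilon_n$ into its core and its $2n$ arms, together with the stability of this decomposition under adding a single signed unit vector to a core vector. The only care required is to cover both the ``same coordinate twice'' sub-case $\Delta = \pm 2 e_i$ and the ``two distinct coordinates'' sub-case $\Delta = s_i e_i + s_j e_j$ when $d_M(X_1, X_2) = 2$, and then to verify on each of the three non-trivial patterns that the two vectors $Z - P_1$ and $Z - P_2$ have the required form.
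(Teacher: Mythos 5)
Your proof is correct and follows essentially the same route as the paper: the paper simply picks a point $Y$ with $d_M(X_1,Y)\leq 1$ and $d_M(X_2,Y)\leq 1$ and notes it must lie in both translations, which is exactly your construction of $Z$, made explicit by the case analysis on $X_2-X_1$ and the observation that a core vector plus a signed unit vector stays in $\Upsilon_n$. No gap; your version just spells out what the paper leaves as ``by definition.''
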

\begin{proof}
If $d_M (X_1 ,X_2) \leq 2$ then there exists a point $Y \in \Z^n$
such that $d_M (X_1 , Y) \leq 1$ and $d_M(X_2 ,Y) \leq 1$. By
definition $Y \in \cS_1 \cap \cS_2$.
\end{proof}
\begin{cor}
\label{cor:dis_iff_Man} Let $\cS_1$ and $\cS_2$ be two translations of
$\Upsilon_n$. Then $\cS_1 \cap \cS_2 = \varnothing$ if and only if for
every two core points $X_1 \in \cS_1$ and $X_2 \in \cS_2$ we have
$d_M (X_1 ,X_2) \geq 3$.
\end{cor}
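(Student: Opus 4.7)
The plan is to obtain this corollary by a direct two-way contrapositive argument, combining the two results immediately preceding it. In one direction I would take the contrapositive of the previous lemma: if there exist core points $X_1 \in \cS_1$ and $X_2 \in \cS_2$ with $d_M(X_1, X_2) \leq 2$, then $\cS_1 \cap \cS_2 \neq \varnothing$; therefore, assuming $\cS_1 \cap \cS_2 = \varnothing$, every pair of core points $X_1 \in \cS_1$, $X_2 \in \cS_2$ must satisfy $d_M(X_1, X_2) \geq 3$.

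For the converse direction I would appeal to the earlier corollary (the one following \Lref{} style: if $\cS_1 \cap \cS_2 \neq \varnothing$ then there exist core points $X_1, X_2$ with $d_M(X_1, X_2) \leq 2$). Its contrapositive reads: if every pair of core points $X_1 \in \cS_1$, $X_2 \in \cS_2$ satisfies $d_M(X_1, X_2) \geq 3$, then $\cS_1 \cap \cS_2 = \varnothing$. Putting the two implications together yields the ``if and only if'' statement.

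Since both building blocks are already proven, there is essentially no obstacle here; the only thing to be careful about is phrasing the two contrapositives cleanly and quantifying correctly (``there exist'' becomes ``for every'' under negation). I expect the written proof to be just two or three lines, simply citing the preceding lemma and corollary and noting that the statement is their combined contrapositive.
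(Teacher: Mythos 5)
Your proposal is correct and matches the paper's (implicit) reasoning: the corollary is simply the conjunction of the contrapositives of the immediately preceding corollary and lemma, which is why the paper states it without a separate proof. Your two-line argument with the quantifier flip under negation is exactly what is intended.
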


\begin{theorem}
\label{thm:P2} Let $\cS_1 =X+ \Upsilon_n$
and $\cS_2 =Y+\Upsilon_n$, where $X,Y \in \Z^n$, be two translations of
$\Upsilon_n$. Then $\cS_1 \cap
\cS_2 = \varnothing$ if and only if $d_C (X,Y) \geq 3$.
\end{theorem}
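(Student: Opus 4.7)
The plan is to reduce the statement to \Cref{cor:dis_iff_Man}, which characterizes disjointness of two translations of $\Upsilon_n$ in terms of Manhattan distances between their cores, and then to recognize that the cross distance $d_C(X,Y)$ is exactly the minimum Manhattan distance achievable between a core point of $\cS_1$ and a core point of $\cS_2$.

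First I would identify the cores explicitly: by the convention fixed just before the corollary, the core of $\cS_1 = X + \Upsilon_n$ is $\{X - \epsilon : \epsilon \in \{0,1\}^n\}$ and the core of $\cS_2 = Y+\Upsilon_n$ is $\{Y-\delta : \delta \in \{0,1\}^n\}$. For any two core points $X_1 = X - \epsilon$ and $X_2 = Y - \delta$ one has
$$
d_M(X_1, X_2) \;=\; \sum_{i=1}^n \bigl| (x_i - y_i) + (\delta_i - \epsilon_i) \bigr|.
$$
Since the $i$-th summand depends only on $\epsilon_i, \delta_i$, the minimization over $\epsilon, \delta \in \{0,1\}^n$ decouples across coordinates.

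The key coordinate-wise step is to show that for any integer $z_i = x_i - y_i$,
$$
\min_{\epsilon_i, \delta_i \in \{0,1\}} \bigl| z_i + \delta_i - \epsilon_i \bigr| \;=\; \max\{0,\, |z_i| - 1\}.
$$
This is a small case check: the value $\delta_i - \epsilon_i$ ranges over $\{-1, 0, 1\}$, so if $|z_i| \geq 1$ one can shift $z_i$ by $-\mathrm{sgn}(z_i)$ to reduce its absolute value by exactly one, while if $z_i = 0$ the minimum is $0$. Summing over $i$ yields
$$
\min_{X_1 \in \text{core}(\cS_1),\, X_2 \in \text{core}(\cS_2)} d_M(X_1, X_2) \;=\; \sum_{i=1}^n \max\{0,\, |x_i - y_i|-1\} \;=\; d_C(X,Y).
$$

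Finally, I would combine this identity with \Cref{cor:dis_iff_Man}: $\cS_1 \cap \cS_2 = \varnothing$ holds if and only if every pair of core points is at Manhattan distance at least $3$, which by the identity above is equivalent to $d_C(X,Y) \geq 3$. There is no real obstacle; the only subtle point is keeping the two implications of the coordinate-wise minimum honest (exhibiting witnesses $\epsilon_i, \delta_i$ for the upper bound, and using $|\delta_i - \epsilon_i| \leq 1$ for the lower bound), so I would state that small lemma cleanly before assembling the final equivalence.
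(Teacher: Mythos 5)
Your proposal is correct and follows essentially the same route as the paper: both reduce the statement to \Cref{cor:dis_iff_Man} by showing that the minimum Manhattan distance between a core point of $\cS_1$ and a core point of $\cS_2$ equals $d_C(X,Y)$. The paper exhibits the minimizing pair of core points explicitly (via the centers of mass), while you obtain the same identity by a coordinate-wise minimization over the shifts in $\{0,1\}^n$; the difference is only presentational.
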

\begin{proof}
Let $\tilde{X} =(\tilde{x}_1,\tilde{x}_2,\ldots,\tilde{x}_n)$ and
$\tilde{Y}=(\tilde{y}_1,\tilde{y}_2,\ldots,\tilde{y}_n)$ be the
centers of mass of $\cS_1$ and $\cS_2$, respectively. Clearly,
$\tilde{X}$ and $\tilde{Y}$ are in $(0.5,0.5, \ldots , 0.5) +
\Z^n$. The core points of $\cS_1$ are $\{ (c_1 ,c_2 ,\ldots ,c_n)
~:~ c_i \in \{ \tilde{x}_i - 0.5 , \tilde{x}_i +0.5 \} \}$ and the
core points of $\cS_2$ are $\{ (c_1 ,c_2 ,\ldots ,c_n) ~:~ c_i \in
\{ \tilde{y}_i - 0.5 , \tilde{y}_i +0.5 \} \}$. Let
$X'=(x'_1,x'_2,\ldots,x'_n)$ and $Y'=(y'_1,y'_2,\ldots,y'_n)$ be
the two core points of $\cS_1$ and $\cS_2$, respectively, defined
as follows. If $\tilde{x}_i = \tilde{y}_i$ then $x'_i
\deff \tilde{x}_i +0.5$ and $y'_i \deff \tilde{y}_i +0.5$. If
$\tilde{x}_i < \tilde{y}_i$ then $x'_i \deff \tilde{x}_i +0.5$ and
$y'_i \deff \tilde{y}_i -0.5$. If $\tilde{x}_i > \tilde{y}_i$ then
$x'_i \deff \tilde{x}_i -0.5$ and $y'_i \deff \tilde{y}_i +0.5$.
Clearly, $d_C (X,Y) = d_C(\tilde{X},\tilde{Y})=d_M (X',Y')$ and
for any two core points $\hat{X} \in \cS_1$ and $\hat{Y} \in
\cS_2$ we have that $d_M (\hat{X} ,\hat{Y}) \geq d_M(X',Y')$. Now,
by Corollary~\ref{cor:dis_iff_Man} we have that $\cS_1 \cap \cS_2
= \varnothing$ if and only if $d_C (X,Y) \geq 3$.
\end{proof}

\begin{cor}
\label{cor:P2} The set $\T$ induces a packing of the
$n$-dimensional Euclidian space with $\Upsilon_n$ if and only if
for every two elements $X,Y \in \T$, we have $d_C (X,Y) \geq 3$.
\end{cor}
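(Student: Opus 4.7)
The plan is to deduce this as an immediate consequence of \Tref{thm:P2}. By the definition given just before \Lref{lem:shiftT} (applied to the packing property $\boldsymbol{(\cP.2)}$), the set $\T$ induces a packing of $\R^n$ with $\Upsilon_n$ exactly when, for every two distinct elements $X, Y \in \T$, the translations $\cS_1 = X + \Upsilon_n$ and $\cS_2 = Y + \Upsilon_n$ satisfy $\cS_1 \cap \cS_2 = \varnothing$. This is purely a matter of unpacking the definition of ``packing''.

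Once the definition is unpacked, I would simply invoke \Tref{thm:P2}, which supplies the equivalence $\cS_1 \cap \cS_2 = \varnothing \iff d_C(X,Y) \geq 3$ for every individual pair of translations by integer vectors. Quantifying this equivalence universally over all pairs $X, Y \in \T$ yields the ``if and only if'' in the corollary statement. The case $X = Y$ is trivial on both sides (the intersection is all of $\cS_1$, and $d_C(X,X) = 0 < 3$), so it does not affect the packing condition, which is only required for distinct translations.

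There is no real obstacle here: the content of the statement is already contained in \Tref{thm:P2}, and the corollary is essentially a rewording in the language of an entire point set $\T$ rather than a single pair of translations. The only small care required is to make sure the convention on which integer tilings represent $\Upsilon_n$ (as fixed just before \Lref{lem:shiftT}) is used consistently, so that $X \in \T$ really does correspond to the translation $X + \Upsilon_n$ in the packing.
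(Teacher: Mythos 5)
Your proposal is correct and matches the paper's intent: the paper states this corollary without a separate proof, treating it exactly as you do — an immediate universal quantification of \Tref{thm:P2} over all pairs of elements of $\T$, using the definition of packing via property $\boldsymbol{(\cP.2)}$. Your remark about the trivial case $X=Y$ is a harmless extra precaution and does not change anything.
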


To prove that a set is a tiling with $\Upsilon_n$ we will have to
show that it satisfies properties $\boldsymbol{(\cP.1)}$ and
$\boldsymbol{(\cP.2)}$. For this purpose we will have to show that
each point of $\Z^n$ is contained (covered) in exactly one trnaslation
$\cS$ of $\Upsilon_n$ in the tiling. A point $A \in \Z^n$ is
\emph{covered} by a codeword $X$ in a tiling $\T$ if $A$ is
contained in the translation $X+\Upsilon_n$.
In this case we say that $X$ \emph{covers} $A$.

Given a tiling $\T$ with $\Upsilon_n$ it has to satisfy properties
$\boldsymbol{(\cP.1)}$ and $\boldsymbol{(\cP.2)}$. By considering
how each point $A \in \Z^n$ is covered by a codeword $X \in \T$
(property $\boldsymbol{(\cP.1)}$) we will discover the structure
of~$\T$. To this end we will also use property
$\boldsymbol{(\cP.2)}$, i.e. for each two codewords $X,Y \in \T$
we have that $d_C (X,Y) \geq 3$ (by
Corollary~\ref{cor:P2}).

\vspace{2ex}

\section{The Nonexistence of other Integer Tilings}
\vspace{-.25ex} \label{sec:nonexistence}

In this section we will prove that an integer tiling $\T$ with
$\Upsilon_n$ exists only if $n=2^t-1$ or $n=3^t-1$, for some $t > 0$. In
subsection~\ref{sec:Tiling_odd} we prove this claim for odd $n$ if $\T$
is an integer tiling and for all $n$ if $\T$ is
a lattice tiling. In subsection~\ref{sec:Tiling_even} we
complete the proof for even~$n$. We will obtain this goal by
proving that given a tiling $\T$ with $\Upsilon_n$, certain
elements of~$\Z^n$ must be contained in $\T$. It will be proved by
considering how elements with a small cross weight are covered. For
the rest of this section let $\T$ be a tiling with $\Upsilon_n$.
We remind that without loss of generality we assumed that ${\bf 0} \in \T$ and hence
by Corollary~\ref{cor:P2}, if $X,Y \in \T \setminus \{ {\bf 0}
\}$, where $X \neq Y$, then $w_C (X) \geq 3$, $w_C (Y) \geq 3$, and $d_C
(X,Y) \geq 3$.

\subsection{Tiling for odd $n$ and lattice tiling}
\label{sec:Tiling_odd}

In this subsection we first prove that
for every $r$, $1 \leq r \leq n$, the point $2 e_r$,
is covered by
either $4e_r$ or $3e_r +2e_s$ for some $s \neq r$.
Then we prove that if $3e_r +2e_s$ is a codeword then
$n$ is even, which will imply that if $n$ is odd
then $n=2^t-1$, for some
$t > 0$. Finally, we prove that two codewords of the
form $3e_r +2e_s$ are disjoint, i.e. have disjoint set of
nonzero coordinates. This will lead to the main result
only for a lattice tiling.
The first lemma is
an immediate result from the
definition of $\Upsilon_n$.
\begin{lemma}
\label{lem:cover_cond} Let $X \in \T$ and $A=(a_1,a_2,\ldots,a_n)
\in \Z^n$. The point $A$ is covered by $X$ if and only if
$x_i\in\{a_i-1,a_i,a_i+1,a_i+2\}$, for $1 \leq i \leq n$, and for at
most one $i$ we have $x_i\in \{a_i-1,a_i+2\}$.
\end{lemma}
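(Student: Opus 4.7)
The plan is to unpack the explicit description of $\Upsilon_n$ given just before the lemma and translate it back through the shift by $X$. By definition, $A$ is covered by $X$ precisely when $A \in X + \Upsilon_n$, i.e.\ when $B \deff A - X$ lies in $\Upsilon_n$. So the entire statement reduces to giving a clean coordinate-wise description of membership in $\Upsilon_n$.

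First I would partition $\Upsilon_n$ into its core $\{-1,0\}^n$ and its non-core part. The non-core points come from attaching the unit cubes at $d_M = 1$ from the core that fall outside $\{-1,0\}^n$. For a core point $C \in \{-1,0\}^n$, the neighbor $C+e_j$ leaves the core iff $C_j = 0$, producing the coordinate value $1$; the neighbor $C-e_j$ leaves the core iff $C_j = -1$, producing the coordinate value $-2$. Hence a non-core point of $\Upsilon_n$ is characterized as having exactly one coordinate in $\{-2,1\}$ and all remaining coordinates in $\{-1,0\}$. Combined with the core, this gives the clean description
\[
\Upsilon_n = \bigl\{ B \in \Z^n : b_i \in \{-2,-1,0,1\} \text{ for all } i,\ \text{and at most one } i \text{ has } b_i \in \{-2,1\}\bigr\}.
\]

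Next I would just substitute $b_i = a_i - x_i$. The condition $b_i \in \{-2,-1,0,1\}$ is equivalent to $x_i \in \{a_i-1, a_i, a_i+1, a_i+2\}$, and the condition $b_i \in \{-2,1\}$ is equivalent to $x_i \in \{a_i-1, a_i+2\}$. This yields the precise statement of the lemma, and since each step is an equivalence, both directions are covered simultaneously.

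There is essentially no obstacle here; the only mildly delicate point is making the core/non-core case split clean so that the "at most one" (rather than "exactly one") clause correctly absorbs the pure-core case $B \in \{-1,0\}^n$. I would verify that case explicitly to make the bookkeeping transparent: when every $b_i \in \{-1,0\}$, zero coordinates lie in $\{-2,1\}$, which is covered by the "at most one" clause. After that, the lemma is immediate.
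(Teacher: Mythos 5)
Your proof is correct: the paper actually states this lemma without proof, calling it ``an immediate result from the definition of $\Upsilon_n$,'' and your coordinate-wise description of $\Upsilon_n$ (core $\{-1,0\}^n$ plus arm points with exactly one coordinate in $\{-2,1\}$) followed by the substitution $b_i=a_i-x_i$ is precisely the intended verification.
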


Let $\cD_1$ be the set of points from $\{0,1,2,3\}^n$ in which 2
and 3 appear exactly once.

\begin{lemma}
\label{lem:codewords32} If  $X\in \cD_1 \cap \T$ then $X=3e_r
+2e_s$ for some $r \neq s$.
\end{lemma}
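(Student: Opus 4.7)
The plan is to proceed by contradiction. Since $X \in \cD_1$, I can write $X = 3 e_r + 2 e_s + \sum_{i \in I} e_i$ with $r \neq s$ and $I \subseteq \{1, \ldots, n\} \setminus \{r, s\}$, and the goal is to show $I = \emptyset$. Assume otherwise, pick some $i_0 \in I$, and focus on the point $A = e_r + e_{i_0} \in \Z^n$. By Lemma~\ref{lem:cover_cond}, neither $\zero$ nor $X$ covers $A$: the vector $A-\zero$ has a $1$ in two different positions, while $A-X$ has $-2$ in both positions $r$ and $s$, so each case violates the ``at most one exceptional coordinate'' condition of that lemma. Since $\T$ is a tiling, some other codeword $Y \in \T$ must cover $A$, and the heart of the argument is to show no such $Y$ can exist.

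Applying Lemma~\ref{lem:cover_cond} to $Y$ gives $y_r, y_{i_0} \in \{0,1,2,3\}$, $y_j \in \{-1,0,1,2\}$ for $j \notin \{r, i_0\}$, and at most one coordinate exceptional (meaning $y_r$ or $y_{i_0}$ in $\{0, 3\}$, or $y_j \in \{-1, 2\}$ for some single $j \neq r, i_0$). Combining this with $w_C(Y) \geq 3$, forced by $d_C(Y, \zero) \geq 3$ via Corollary~\ref{cor:P2}, leaves only three possible shapes for $Y$: namely
$Y = 3 e_r + 2 e_{i_0} + \sum_{j \in J} e_j$,
$Y = 2 e_r + 3 e_{i_0} + \sum_{j \in J} e_j$,
or $Y = 2 e_r + 2 e_{i_0} + 2 e_{j_0} + \sum_{j \in J} e_j$ (for some $j_0 \neq r, i_0$), where the remaining entries of $Y$ lie in $\{0, 1\}$. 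For each candidate I then compute $d_C(Y, X)$ using $x_r = 3$, $x_s = 2$, $x_{i_0} = 1$, and $x_i = 1$ for $i \in I$: position $r$ contributes $0$, position $i_0$ contributes at most $1$, position $s$ contributes at most $1$ (since $y_s \in \{0,1,2\}$), and the distinguished position $j_0$ in the third shape contributes at most $1$; all other coordinates satisfy $|y_j - x_j| \leq 1$ and contribute $0$. A short subcase tally shows the total is at most $2$ in every configuration, so $d_C(Y, X) < 3$, contradicting Corollary~\ref{cor:P2}.

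The main obstacle is keeping the case split under control, but the tightness $w_C(X) = 3$ collapses the list of candidates to essentially three, and each of them sits too close to $X$ in the cross distance for the packing condition to hold. The resulting contradiction forces $I = \emptyset$, so $X = 3 e_r + 2 e_s$ as required.
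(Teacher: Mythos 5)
Your proof is correct and takes essentially the same route as the paper: assume an extra coordinate equal to $1$, exhibit a low-weight point that the tiling must cover, and rule out every candidate covering codeword using Lemma~\ref{lem:cover_cond} together with $w_C(Y)\geq 3$ and $d_C(X,Y)\geq 3$ from Corollary~\ref{cor:P2}. The only difference is the choice of auxiliary point --- you use $e_r+e_{i_0}$ while the paper uses $e_r+e_s-e_{i_0}$ --- and your resulting three-shape case analysis checks out.
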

\begin{proof}
Assume without loss of generality that $X=(3,2,1,x_4 , \ldots , x_n)$, where $x_i
\in\{0,1\}$, for $4 \leq i \leq n$. The point $A=(1,1,-1,0, \ldots ,0)$ is
covered by a codeword $Y\in \T$. By Lemma~\ref{lem:cover_cond} we
have that $Y \not\in \{ X , \mathbf{0} \}$ and we can distinguish
between three cases:

\noindent {\bf Case 1}: If $y_i\in \{a_i,a_i+1\}$ for all $i$,
$1 \leq i \leq n$, then
$w_C(Y)\leq 2$, a contradiction.

\noindent {\bf Case 2}: There exists a $j$ such that $y_j=a_j-1$
and $y_i\in \{a_i,a_i+1\}$ for all $i\neq j$. Since $w_C (Y) \geq
3$ it follows that $j=3$ and hence $Y=(2,2,-2, y_4 , \ldots , y_n
)$, where $y_i \in \{ 0,1 \}$, for $4 \leq i \leq n$. This implies
that $d_C (X,Y)=2$, a contradiction.

\noindent {\bf Case 3}: There exists a $j$ such that $y_j=a_j+2$
and $y_i\in \{a_i,a_i+1\}$ for all $i\neq j$. Since $w_C (Y) \geq
3$ it follows that $j \neq 3$. Without loss of generality it implies that
$Y$ can take one of the following forms
\begin{itemize}
\item
$Y=(3,2,y_3 , y_4 , \ldots , y_n)$ or $Y=(2,3,y_3 , y_4 , \ldots ,
y_n)$, where $y_3 \in \{ -1 ,0 \}$ and $y_i \in \{ 0,1 \}$, for $4 \leq i
\leq n$.
\item
$Y=(2,2,y_3 ,2, y_5 , \ldots , y_n)$, where $y_3 \in \{ -1 ,0 \}$
and $y_i \in \{ 0,1 \}$, for $5 \leq i \leq n$.
\end{itemize}
Both forms implies that $d_C (X,Y) \leq 2$, a contradiction.

Therefore, there is no codeword $Y\in \T$ which covers $A$, a
contradiction. Thus, if ${X\in \cD_1 \cap \T}$ then $X=3e_r +2e_s$
for some $r \neq s$.
\end{proof}

Let $\cD_2$ be the set of points from $\{0,1,4\}^n$ in which 4
appears exactly once.

\begin{lemma}
\label{lem:codewords4} If  $X\in \cD_2 \cap \T$ then $X=4e_r$ for
some $1 \leq r \leq n$.
\end{lemma}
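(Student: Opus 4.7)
The plan is to mimic the argument of Lemma~\ref{lem:codewords32}, now exhibiting a point that cannot be covered unless $X=4e_r$. Suppose for contradiction that $X\in \cD_2 \cap \T$ has at least one coordinate equal to $1$. Without loss of generality write $X=(4,1,x_3,\ldots,x_n)$ with $x_i \in \{0,1\}$ for $i\geq 3$. Choose the point $A=(2,-1,0,\ldots,0)\in\Z^n$. By Lemma~\ref{lem:cover_cond}, $A$ is not covered by $\mathbf{0}$ (because $0\notin\{2,3\}$ and $0\notin\{1,4\}$), and $A$ is not covered by $X$ (because $x_1=4$ uses the one distinguished coordinate as $4\in\{1,4\}$, yet $x_2=1\notin\{-1,0\}$). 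Hence some codeword $Y\in\T\setminus\{\mathbf{0},X\}$ covers $A$, and by Corollary~\ref{cor:P2} satisfies $w_C(Y)\geq 3$ and $d_C(X,Y)\geq 3$. I will case-split on which coordinate $j$ (if any) plays the role of the distinguished coordinate in Lemma~\ref{lem:cover_cond} applied to $Y$ covering $A$, and drive each case to a contradiction.

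In the first case (no distinguished coordinate), the constraints force $y_1\in\{2,3\}$, $y_2\in\{-1,0\}$, and $y_i\in\{0,1\}$ for $i\geq 3$, so $w_C(Y)\leq 2$, contradicting $w_C(Y)\geq 3$. In the second case ($j=1$), either $y_1=1$, in which case every coordinate contributes $0$ to $w_C(Y)$; or $y_1=4$, in which case $|x_1-y_1|=0$, $|x_2-y_2|\leq 2$, and $|x_i-y_i|\leq 1$ for $i\geq 3$, giving $d_C(X,Y)\leq 1$; both sub-cases yield contradictions.

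In the third case ($j=2$), we have $y_2\in\{-2,1\}$ while $y_1\in\{2,3\}$ and $y_i\in\{0,1\}$ for $i\geq 3$. The only way $w_C(Y)\geq 3$ is $y_1=3$ and $y_2=-2$; but then $d_C(X,Y)=0+2+0=2$, a contradiction. In the fourth case ($j\geq 3$), we have $y_1\in\{2,3\}$, $y_2\in\{-1,0\}$, $y_j\in\{-1,2\}$, and $y_i\in\{0,1\}$ for $i\in\{3,\ldots,n\}\setminus\{j\}$. The cross-weight bound $w_C(Y)\leq (y_1-1)+\max\{0,|y_j|-1\}\leq 3$ forces $y_1=3$ and $y_j=2$. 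Then $d_C(X,Y)=0+\max\{0,|1-y_2|-1\}+\max\{0,|x_j-2|-1\}\leq 1+1=2$, again a contradiction.

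Since every possibility for $Y$ is excluded, $A$ cannot be covered by any codeword of $\T$, violating property $\boldsymbol{(\cP.1)}$. Therefore no coordinate of $X$ equals $1$, so $X=4e_r$ for some $1\leq r\leq n$. The main obstacle is simply the complete enumeration of the distinguished-coordinate cases; each reduces to a short numerical check using $w_C(Y)\geq 3$ and $d_C(X,Y)\geq 3$, exactly as in Lemma~\ref{lem:codewords32}.
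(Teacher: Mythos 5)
Your proof is correct and follows essentially the same approach as the paper: pick a point that must be covered, enumerate the possible covering codewords via Lemma~\ref{lem:cover_cond}, and eliminate each candidate using $w_C(Y)\geq 3$ and $d_C(X,Y)\geq 3$. The only difference is the witness point (you use $2e_1-e_2$ where the paper uses $e_1+e_2$), and your case analysis checks out.
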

\begin{proof}
Assume without loss of generality that $X=(4,1,x_3 , \ldots , x_n)$, where $x_i
\in\{0,1\}$, for $3 \leq i \leq n$. The point $A=(1,1,0, \ldots ,0)$ is
covered by a codeword $Y\in \T$. By Lemma~\ref{lem:cover_cond} we
have that $Y \not\in \{ X , \mathbf{0} \}$ and we can distinguish
between two cases:

\noindent {\bf Case 1}: If $y_i\in \{a_i,a_i+1\}$ for all $i$, $1 \leq i \leq n$,
with a possible exception for at most one $j$, for which
$y_j=a_j-1$, then $w_C(Y)\leq 2$, a contradiction.

\noindent {\bf Case 2}: There exists a $j$ such that $y_j=a_j+2$
and $y_i\in \{a_i,a_i+1\}$ for all $i\neq j$. Without loss of generality
it implies that
$Y$ can take one of the following forms
\begin{itemize}
\item
$Y=(3,2,y_3 , \ldots , y_n)$, $Y=(2,3,y_3 , \ldots , y_n)$,
where $y_i \in \{ 0,1 \}$ for $3 \leq i \leq n$.

\item
$Y=(2,2,2 , y_4 , \ldots , y_n)$, where
$y_i \in \{ 0,1 \}$ for $4 \leq i \leq n$.
\end{itemize}
Hence, $d_C (X,Y) \leq 2$,
a contradiction.

Therefore, there is no codeword $Y\in \T$ which covers $A$, a
contradiction. Thus, if $X\in \cD_2 \cap \T$ then $X=4e_r$ for
some $1 \leq r \leq n$.
\end{proof}
\begin{cor}
\label{cor:cover2ei} For each $r$, $1 \leq r \leq n$, the point $2
e_r$ is covered by a codeword $X \in \T$, where either $X=4e_r$ or
$X=3e_r +2e_s$ for some $s \neq r$.
\end{cor}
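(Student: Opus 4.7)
The plan is to apply Lemma~\ref{lem:cover_cond} to the point $A=2e_r$, combine the resulting coordinate restrictions with the packing lower bound $w_C(X)\geq 3$ from Corollary~\ref{cor:P2} (valid since $\mathbf{0}\in\T$), and finish by invoking Lemmas~\ref{lem:codewords32} and~\ref{lem:codewords4} in the two surviving cases.

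By Lemma~\ref{lem:permute} I may take $r=1$, so $A=(2,0,\dots,0)$. Property $\boldsymbol{(\cP.1)}$ produces a codeword $X=(x_1,\dots,x_n)\in\T$ covering $A$, and Lemma~\ref{lem:cover_cond} restricts $x_1\in\{1,2,3,4\}$ and $x_i\in\{-1,0,1,2\}$ for $i>1$, with at most one \emph{extreme} coordinate (meaning $x_1\in\{1,4\}$ or $x_i\in\{-1,2\}$ for some $i>1$). The next step is to record the contribution of each possible coordinate value to $w_C$: a middle $x_1\in\{2,3\}$ contributes at most $2$, a middle $x_i\in\{0,1\}$ for $i>1$ contributes $0$, the extreme values $x_1=1$ and $x_j=-1$ also contribute $0$, while $x_1=4$ contributes $3$ and $x_j=2$ (for $j>1$) contributes $1$.

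With this bookkeeping the case split is immediate. If no coordinate is extreme, or if the unique extreme is $x_1=1$ or $x_j=-1$ for some $j>1$, then $w_C(X)\leq 2$, contradicting $w_C(X)\geq 3$. If the extreme is $x_1=4$, then $X\in\{4\}\times\{0,1\}^{n-1}\subseteq\cD_2$, so Lemma~\ref{lem:codewords4} forces $X=4e_1$. If the extreme is $x_j=2$ for some $j>1$, then the lower bound $w_C(X)\geq 3$ requires $x_1=3$, which places $X$ in $\cD_1$; Lemma~\ref{lem:codewords32} then produces $X=3e_{r'}+2e_{s'}$ for some $r'\neq s'$, and since the unique $3$ sits in coordinate $1$ and the unique $2$ in coordinate $j$, we conclude $X=3e_1+2e_j$.

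The main obstacle is purely organizational: one must be careful in listing the possible patterns of extreme versus middle coordinates and in checking that the cross-weight contributions really do sum to at most $2$ in the bad cases. Once the candidates are confined to subsets of $\cD_1$ and $\cD_2$, Lemmas~\ref{lem:codewords32} and~\ref{lem:codewords4} do all the real work, and the elementary cross-weight computation disposes of everything else.
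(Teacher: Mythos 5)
Your proof is correct and follows essentially the same route as the paper: it uses Lemma~\ref{lem:cover_cond} together with $w_C(X)\geq 3$ to force $X$ into $\cD_1$ or $\cD_2$, and then invokes Lemmas~\ref{lem:codewords32} and~\ref{lem:codewords4}. The only difference is that you spell out the coordinate-by-coordinate cross-weight bookkeeping that the paper dismisses as "easily verified."
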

\begin{proof}
By Lemma~\ref{lem:cover_cond}, $X$ is not the all-zero codeword.
Moreover, since $w_C(X) \geq 3$ it can be easily verified that
either $X \in \cD_1$ or $X \in \cD_2$. It follows from
Lemmas~\ref{lem:codewords32} and~\ref{lem:codewords4} that
either $X=4e_r$ or $X=3e_r +2e_s$ for some $s \neq r$.
\end{proof}

Let $\cD_3$ be the set of points from $\{0,1,2\}^n$ in which 2
appears exactly three times.

\begin{lemma}
\label{lem:codewords1222} If $X=3e_r +2e_s \in \T$ then for every
$k\not\in \{ r,s \}$ there exists a unique $j\not\in \{ r,s,k \}$
and a codeword $Y\in \cD_3 \cap \T$ such that
$y_r=1,y_s=y_k=y_j=2$.
\end{lemma}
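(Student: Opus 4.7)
The plan is to study how the specific point $A \deff e_r + e_s + e_k \in \Z^n$ is covered. Since $w_C(A) = 0 < 3$ we have $A \notin \T$, so $A$ is covered by exactly one codeword $Y \in \T$. A direct check via Lemma \ref{lem:cover_cond} shows that $Y$ cannot be $\mathbf{0}$ (for $\mathbf{0}$ would require three boundary coordinates, namely $0 - a_r = 0 - a_s = 0 - a_k = -1$) nor the given $X = 3e_r + 2e_s$ (since $x_r - a_r = 2$ at $r$ and $x_k - a_k = -1$ at $k$ already furnish two boundary coordinates). Hence $Y$ is a new codeword whose structure must now be pinned down.

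I would then apply Lemma \ref{lem:cover_cond} and split into cases according to which coordinate of $Y$, if any, takes its boundary value $y_i - a_i \in \{-1, 2\}$. The subcases ``no boundary'', ``boundary at $r$'' (either $y_r = 0$ or $y_r = 3$), ``boundary at $s$'' (either value), ``boundary at $k$ with $y_k = 0$'', and ``boundary at some $j \notin \{r, s, k\}$ with $y_j = -1$'' are each ruled out by a short arithmetic argument in the spirit of Lemmas \ref{lem:codewords32} and \ref{lem:codewords4}: one checks that either $w_C(Y) \leq 2$ or $d_C(X, Y) \leq 2$, violating Corollary \ref{cor:P2}.

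Only two subcases survive. The first, ``boundary at $k$ with $y_k = 3$'', forces $y_r = 1$ and $y_s = 2$ through the two lower bounds $w_C(Y), d_C(X, Y) \geq 3$, and leaves the remaining coordinates in $\{0,1\}$, so that $Y = e_r + 2e_s + 3e_k + \sum_{i \in T} e_i$ for some $T \subseteq \{1, \ldots, n\} \setminus \{r, s, k\}$. Since such a $Y$ lies in $\cD_1$, Lemma \ref{lem:codewords32} forces $Y = 3e_k + 2e_s$, contradicting $y_r = 1$. The second, ``boundary at some $j \notin \{r, s, k\}$ with $y_j = 2$'', forces $y_r = 1$, $y_s = 2$, $y_k = 2$, and leaves the remaining coordinates in $\{0, 1\}$, so that $Y = e_r + 2e_s + 2e_k + 2e_j + \sum_{i \in T} e_i \in \cD_3$. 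This is the claimed form, and uniqueness of $j$ is inherited from the uniqueness of the codeword covering $A$ in a tiling.

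The main obstacle is the case enumeration itself: the margins are tight (the surviving configuration has $w_C(Y) = d_C(X, Y) = 3$ on the nose), so one must meticulously extract the coordinates forced by each of the two inequalities before invoking Lemma \ref{lem:codewords32} to close out the $y_k = 3$ possibility. Everything else amounts to routine arithmetic.
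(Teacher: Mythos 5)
Your proposal is correct and follows essentially the same route as the paper: analyze which codeword covers $A=e_r+e_s+e_k$ via Lemma~\ref{lem:cover_cond}, eliminate all boundary configurations using $w_C(Y)\geq 3$ and $d_C(X,Y)\geq 3$, and dispose of the $y_k=3$ survivor with Lemma~\ref{lem:codewords32}, leaving exactly the $\cD_3$ codeword claimed. The only (harmless) divergence is the uniqueness step, which the paper gets from $d_C(Y,Y')\leq 2$ for two such codewords while you get it from the fact that any codeword of the stated form covers $A$ and a tiling covers $A$ exactly once; both work.
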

\begin{proof}
Let $k\not\in \{ r,s \}$ and consider the point $A=e_r+e_s+e_k$.
Without loss of generality we assume that
$r=1$, $s=2$, and $k=3$, i.e. $X=(3,2,0,\ldots,0)$ and
$A=(1,1,1,0\ldots,0)$. The point
$A$ is covered by a codeword $Y\in \T$.
By Lemma~\ref{lem:cover_cond} we have that
$Y \not\in \{ X , \mathbf{0} \}$ and we can distinguish between
three cases:

\noindent {\bf Case 1}: If $y_i\in \{a_i,a_i+1\}$ for $1 \leq i \leq n$, then
since $w_C(Y) \geq 3$ it follows that $Y=(2,2,2,y_4 , \ldots ,
y_n)$, where $y_i \in \{ 0,1 \}$, for $4 \leq i \leq n$. Hence,
$d_C(X,Y) =1$, a contradiction.

\noindent {\bf Case 2}: There exists a $j$ such that $y_j=a_j-1$
and $y_i\in \{a_i,a_i+1\}$ for all $i\neq j$. If $j\leq 3$ then
$w_C(Y)\leq 2$, a contradiction. If $j > 3$ then since $w_C(Y)\geq
3$ it follows that $Y=(2,2,2,y_4 , \ldots , y_n)$, where
$y_i \in \{ -1,0,1 \}$
for $4 \leq i \leq n$, and hence $d_C(X,Y) = 1$, a
contradiction.

\noindent {\bf Case 3}: There exists a $j$ such that $y_j=a_j+2$
and $y_i\in \{a_i,a_i+1\}$ for all $i\neq j$. If $j \leq 3$ then
since $w_C(Y) \geq 3$ and $d_C(X,Y)\geq 3$ it follows that
$Y=(1,2,3,y_4 , \ldots , y_n)$, where $y_i \in \{ 0, 1\}$, for $4 \leq
i \leq n$, a contradiction to Lemma~\ref{lem:codewords32}.

\vspace{0.2cm}

Therefore, there exists a $j > 3$ such that $y_j=a_j+2$ and
$y_i\in \{a_i,a_i+1\}$ for all $i\neq j$. Without loss of generality we assume that
$j=4$. Since $w_C(Y) \geq 3$ and $d_C(X,Y)\geq 3$ it follows that
$Y=(1,2,2,2,y_5 , \ldots , y_n)$, where $y_i \in \{ 0,1 \}$, for $5
\leq i \leq n$. The uniqueness of $j$ follows from the fact that
if there exists another $j$ and a related codeword $Y'$ then
$d_C(Y,Y') \leq 2$.
\end{proof}

\begin{cor}
\label{cor:even32} If $3e_r +2e_s \in \T$ then $n$ is even.
\end{cor}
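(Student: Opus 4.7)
The plan is to use \Lref{lem:codewords1222} to construct a fixed-point-free involution on the index set $\{1,2,\ldots,n\}\setminus\{r,s\}$, which forces $n-2$, and hence $n$, to be even.

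First, I would fix $X=3e_r+2e_s\in\T$ and, for each $k\in\{1,2,\ldots,n\}\setminus\{r,s\}$, use \Lref{lem:codewords1222} to extract the unique index $j=j(k)\notin\{r,s,k\}$ together with the codeword $Y_k\in\cD_3\cap\T$ whose coordinate at $r$ equals $1$ and whose coordinates at $s,k,j(k)$ equal $2$ (all other coordinates being in $\{0,1\}$). Since $j(k)\neq k$ by construction, this defines a map $k\mapsto j(k)$ on $\{1,2,\ldots,n\}\setminus\{r,s\}$ with no fixed points.

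Next I would prove that this map is an involution. Fix some $k$, let $j=j(k)$, and consider the role of the index $j$. The same codeword $Y_k$ satisfies $(Y_k)_r=1$ and $(Y_k)_s=(Y_k)_j=(Y_k)_k=2$, so it is precisely a codeword witnessing \Lref{lem:codewords1222} for the choice of index $j$ (with partner $k$). Because the lemma asserts the \emph{uniqueness} of the partner produced, we must have $j(j(k))=k$. Thus $k\mapsto j(k)$ is a fixed-point-free involution on a set of cardinality $n-2$, so $n-2$ is even and $n$ is even.

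The only nontrivial step is the invocation of uniqueness: one must make sure that $Y_k$ really qualifies as ``the'' codeword for the index $j$ in \Lref{lem:codewords1222}, i.e., that its nonzero structure matches the form demanded by that lemma (a $1$ at position $r$, $2$'s at three positions including $s$, entries in $\{0,1\}$ elsewhere), which is immediate from $Y_k\in\cD_3$ and the values prescribed there. Once this is checked, the parity argument is immediate and no further calculation is needed.
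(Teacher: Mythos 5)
Your proof is correct and follows essentially the same route as the paper: the paper also pairs up the coordinates outside $\{r,s\}$ via \Lref{lem:codewords1222} into disjoint pairs and concludes $n-2$ is even. Your explicit fixed-point-free involution with the uniqueness check is just a more carefully spelled-out version of that same pairing argument.
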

\begin{proof}
By Lemma~\ref{lem:codewords1222} all coordinates except for $r$ and $s$
should be paired, in disjoint pairs (such a pair
$\{ k,j \}$ induces a codeword of the form
$Y=(y_1,y_2, \ldots , y_n) \in \cD_3 \cap \T$,
where $y_r=1,y_s=y_k=y_j=2$). Thus, $n$ is even.
\end{proof}
From Corollaries~\ref{cor:cover2ei} and~\ref{cor:even32} we infer
that
\begin{cor}
\label{cor:odd4} If $n$ is odd then for all $X \in \T$ and $1 \leq
r \leq n$ we have $X+4e_r \in \T$, i.e. $\T$ is a periodic tiling
with period 4.
\end{cor}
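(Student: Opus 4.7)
The plan is to reduce the problem to a statement about the origin using translation invariance, then combine the two cited corollaries directly. Specifically, I would fix an arbitrary $X \in \T$ and consider the shifted set $\T' \deff \T - X$. By Lemma~\ref{lem:shiftT}, $\T'$ is again a tiling with $\Upsilon_n$, and by construction $\mathbf{0} \in \T'$. Since all of the preceding analysis (in particular the covering analysis for $2e_r$) was carried out under the assumption $\mathbf{0} \in \T$, it applies verbatim to $\T'$.

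Now I would apply Corollary~\ref{cor:cover2ei} to $\T'$: for each $r$, $1 \leq r \leq n$, the point $2e_r$ is covered by some codeword of $\T'$, which is either $4e_r$ or of the form $3e_r + 2e_s$ for some $s \neq r$. The second possibility is ruled out by Corollary~\ref{cor:even32}: if any codeword of the form $3e_r + 2e_s$ appears in a tiling with $\Upsilon_n$, then $n$ must be even. Since we assume $n$ is odd, the only remaining option is $4e_r \in \T'$, which means $X + 4e_r \in \T$.

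Because $X \in \T$ and $r$ were arbitrary, I obtain $X + 4e_r \in \T$ for every $X \in \T$ and every $1 \leq r \leq n$. Invoking the earlier lemma characterizing periodic tilings (a tiling $\T$ is periodic with period $p$ iff $X \in \T$ implies $X + p\cdot e_i \in \T$ for every $i$), this immediately yields that $\T$ is periodic with period $4$.

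There is no real obstacle here: the whole content has already been isolated in the two cited corollaries, and the only conceptual step is the translation trick that lets us upgrade the statement ``$4e_r$ must be in any tiling containing $\mathbf{0}$'' to ``$X + 4e_r$ must be in any tiling containing $X$.'' That trick is justified by Lemma~\ref{lem:shiftT}, which is precisely what makes the ``without loss of generality $\mathbf{0} \in \T$'' convention usable in a global periodicity argument.
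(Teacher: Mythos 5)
Your proof is correct and follows exactly the route the paper intends: Corollary~\ref{cor:cover2ei} forces the cover of $2e_r$ to be $4e_r$ or $3e_r+2e_s$, Corollary~\ref{cor:even32} eliminates the latter for odd $n$, and the shift $\T-X$ via Lemma~\ref{lem:shiftT} upgrades this to periodicity. The paper leaves the translation step implicit (it simply "infers" the corollary from the two cited results under the standing convention $\mathbf{0}\in\T$), so your write-up just makes that step explicit.
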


\begin{theorem}
\label{thm:odd_tiling} If $\T$ is an integer tiling with
$\Upsilon_n$, where $n$ is an odd integer, then $n=2^t-1$ for some
$t > 0$.
\end{theorem}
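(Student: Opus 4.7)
The plan is to combine Corollary~\ref{cor:odd4} (which forces $\T$ to be $4$-periodic when $n$ is odd) with a straightforward volume/divisibility argument modulo $4$. All the structural work has been done in the preceding lemmas; the remaining step is essentially bookkeeping.

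First, by Corollary~\ref{cor:odd4}, the tiling $\T$ is periodic with period $4$. Since $\mathbf{0}\in\T$ by our standing assumption, this means $4\Z^n\subseteq\T$, and $\T$ is the union of cosets of $4\Z^n$. I would set $\cC \deff \T \cap \tilde{\Z}_4^n$, so that $\T = \cC + 4\Z^n$ and $\cC$ is a (finite) code of length $n$ over $\Z_4$ whose expanded code equals $\T$.

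Second, I would observe that $\Upsilon_n \subseteq \{-2,-1,0,1\}^n$: the core lies in $\{-1,0\}^n$, and each non-core point of $\Upsilon_n$ is obtained from a core point by shifting a single coordinate by $\pm 1$, so every coordinate takes a value in $\{-2,-1,0,1\}$. In particular, distinct points of $\Upsilon_n$ have distinct residues modulo $4$, so reduction modulo $4$ sends $\Upsilon_n$ injectively into $\Z_4^n$. Consequently the tiling identity $\T + \Upsilon_n = \Z^n$ (with each point covered exactly once) descends to a tiling identity $\cC + (\Upsilon_n \bmod 4) = \Z_4^n$ in which every element of $\Z_4^n$ is still covered exactly once. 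Counting sizes gives
\[
|\cC| \cdot |\Upsilon_n| \;=\; 4^n.
\]

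Third, since $|\Upsilon_n| = 2^n(n+1)$, this forces $|\cC| = 2^n/(n+1)$, which must be a positive integer. Hence $n+1$ divides $2^n$, so $n+1$ is a power of $2$. Writing $n+1 = 2^t$ with $t\geq 1$ (valid because $n$ is odd and positive) yields $n = 2^t-1$, as claimed.

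There is no real obstacle at this stage: the hard work was carried out in Corollaries~\ref{cor:cover2ei}, \ref{cor:even32}, and~\ref{cor:odd4}, which together eliminated the ``mixed'' covering $3e_r+2e_s$ for odd $n$ and established periodicity. The only point requiring a line of verification is that the image of $\Upsilon_n$ in $\Z_4^n$ has full size $2^n(n+1)$, which is immediate from $\Upsilon_n\subseteq\{-2,-1,0,1\}^n$.
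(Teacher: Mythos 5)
Your proposal is correct and follows essentially the same route as the paper: Corollary~\ref{cor:odd4} gives period $4$, and then the divisibility $2^n(n+1)\mid 4^n$ forces $n+1$ to be a power of $2$. Your extra verification that $\Upsilon_n\subseteq\{-2,-1,0,1\}^n$ (so the tiling descends injectively to $\Z_4^n$) just makes explicit the step the paper states as ``the size of $\Upsilon_n$ divides $4^n$.''
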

\begin{proof}
By Corollary~\ref{cor:odd4} we have that $\T$ is a periodic tiling
with period 4. Therefore, the size of $\Upsilon_n$ divides $4^n$.
The size of $\Upsilon_n$ is $2^n (n+1)$ and hence $n=2^t-1$ for some $t > 0$.
\end{proof}


\begin{lemma}
\label{lem:disjoint32} If there exist two distinct codewords
$X=3e_i +2e_j$ and $X'=3e_r +2e_s$ in $\T$ then $\{ i,j \} \cap \{
r,s \} = \varnothing$.
\end{lemma}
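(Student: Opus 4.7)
The plan is to argue by contradiction and a case analysis on how $\{i,j\}$ and $\{r,s\}$ can overlap. Assume $X = 3e_i + 2e_j$ and $X' = 3e_r + 2e_s$ lie in $\T$ with $X \neq X'$ and $\{i,j\} \cap \{r,s\} \neq \varnothing$. Up to relabeling (and swapping $X,X'$), the possibilities are $i = r$ with $j \neq s$; $i = s$ (with $r \neq i$, $j \neq i$); $j = r$ (with $i \neq r$, $s \neq r$); or $j = s$ (with $i \neq r$). In each case the goal is to produce a pair of codewords violating the distance bound $d_C \geq 3$ guaranteed by Corollary~\ref{cor:P2}.

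The first case $i = r$ is immediate: computing coordinate-by-coordinate, the position $i=r$ contributes $0$ to $d_C(X,X')$, the position $j$ contributes $1$ (since $|2-0|-1=1$), and the position $s$ contributes $1$, giving $d_C(X,X') = 2 < 3$, a contradiction.

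For each of the remaining three cases, $d_C(X,X')$ turns out to be at least $3$, so a direct comparison is not enough; this is the main obstacle. The idea is to invoke Lemma~\ref{lem:codewords1222} to produce a third codeword close to one of $X$ or $X'$. Concretely, when $i = s$, apply Lemma~\ref{lem:codewords1222} to $X' = 3e_r + 2e_i$ with the choice $k = j$, obtaining a codeword $Y \in \cD_3 \cap \T$ with $y_r = 1$ and $y_i = y_j = y_{j'} = 2$ for some $j' \notin \{r,i,j\}$; then check that $d_C(X,Y) = 1$, a contradiction. When $j = r$, apply Lemma~\ref{lem:codewords1222} to $X = 3e_i + 2e_r$ with $k = s$ to obtain $Y$ with $y_i = 1$, $y_r = y_s = y_{k'} = 2$, and verify $d_C(X',Y) = 1$. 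When $j = s$, apply Lemma~\ref{lem:codewords1222} to $X$ with $k = r$ to obtain $Y$ with $y_i = 1$, $y_j = y_r = y_{k'} = 2$, and again $d_C(X',Y) = 1$.

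In every case we exhibit two distinct codewords of $\T$ at cross distance less than $3$, contradicting Corollary~\ref{cor:P2}. Hence $\{i,j\} \cap \{r,s\} = \varnothing$. The verifications are routine coordinatewise computations using the formula $d_C(U,V) = \sum_i \max\{0, |u_i - v_i| - 1\}$, so the only nontrivial step is to pick the right value of $k$ in Lemma~\ref{lem:codewords1222} so that the resulting codeword $Y$ shares enough large coordinates with $X$ or $X'$ to force the cross distance to collapse.
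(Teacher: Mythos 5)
Your overall strategy is exactly the paper's: dispose of one overlap pattern by a direct cross-distance computation, and for the others use Lemma~\ref{lem:codewords1222} to manufacture a codeword $Y\in\cD_3$ at cross distance $1$ from the other codeword of the form $3e+2e$, contradicting Corollary~\ref{cor:P2}. Your four explicit verifications ($i=r$; $i=s$ with $j\neq r$; $j=r$ with $s\neq i$; $j=s$) are all correct, and the choices of $k$ there are legitimate.

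There is, however, one sub-case your case analysis does not validly cover: $\{i,j\}=\{r,s\}$, i.e. $X'=3e_j+2e_i$ (which is a distinct codeword from $X$, so it is not excluded by $X\neq X'$). It falls under both your ``$i=s$'' and ``$j=r$'' branches, but in the first the prescribed choice $k=j$ equals $r$, and in the second the choice $k=s$ equals $i$, so in both branches the application of Lemma~\ref{lem:codewords1222} violates its hypothesis $k\notin\{r,s\}$ (respectively $k\notin\{i,j\}$). Moreover your blanket assertion that in the three remaining cases $d_C(X,X')\geq 3$ fails precisely here: $d_C(3e_i+2e_j,\,3e_j+2e_i)=0$. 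Fortunately that observation is itself the fix: this sub-case is killed by the same one-line direct computation you used for $i=r$, since $d_C(X,X')=0<3$ already contradicts Corollary~\ref{cor:P2}; this is exactly how the paper handles it (``$X'\neq 3e_2+2e_1$'') after normalizing to $i=1$, $j=2$. With that sub-case added, your proof is complete and coincides in substance with the paper's, differing only in that you sometimes apply Lemma~\ref{lem:codewords1222} to $X'$ rather than to $X$, where the paper instead appeals to the symmetry between the two codewords.
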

\begin{proof}
Without loss of generality we assume that $i=1$ and $j=2$. Since $d_C(X,X') \geq 3$
it follows that $r \neq 1$ and $X' \neq 3e_2 +2e_1$. If $r=2$ or
$s=2$ then without loss of generality we assume that $X'=(0,3,2,0, \ldots ,0)$ or
$X'=(0,2,3,0, \ldots ,0)$. By Lemma~\ref{lem:codewords1222} we
have a codeword $Y=(1,2,2,y_4 , \ldots , y_n) \in \cD_3 \cap \T$.
It implies that $d_C (X',Y)=1$, a contradiction. The case where
$s=1$ and $r>2$ is symmetric to the case where $r=2$ and $s>2$.
\end{proof}
From Corollary~\ref{cor:cover2ei} and Lemma~\ref{lem:disjoint32}
we have that
\begin{cor}
\label{cor:one32and4} If $3e_r +2e_s \in \T$ then $4e_s \in \T$.
\end{cor}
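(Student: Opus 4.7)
The plan is to derive this corollary directly by combining Corollary~\ref{cor:cover2ei} (applied to index $s$) with Lemma~\ref{lem:disjoint32}. The hypothesis gives us one codeword of the form $3e_i+2e_j$ already present in $\T$, and the goal is to force the codeword that covers $2e_s$ to be $4e_s$ rather than another codeword of the $3\text{-}2$ form.

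First, since $\T$ is a tiling, the point $2e_s \in \Z^n$ must be covered by some codeword $X'' \in \T$. Applying Corollary~\ref{cor:cover2ei} with the role of $r$ played by $s$, this covering codeword is either $X''=4e_s$, in which case we are done immediately, or $X''=3e_s+2e_k$ for some $k \neq s$.

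So suppose for contradiction that $X''=3e_s+2e_k$. Then $\T$ contains both $X=3e_r+2e_s$ (by hypothesis) and $X''=3e_s+2e_k$, and these are distinct because their nonzero coordinate patterns differ ($X$ has its $3$ in position $r$ while $X''$ has its $3$ in position $s$, and $r \neq s$ since $X$ has distinct support indices). Now Lemma~\ref{lem:disjoint32} applied to the pair $X, X''$ demands that $\{r,s\} \cap \{s,k\} = \varnothing$. This is impossible, since $s$ sits in both sets.

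Thus the second alternative of Corollary~\ref{cor:cover2ei} is ruled out, and we conclude $4e_s \in \T$. There is no real obstacle here — the argument is a two-line composition of the two preceding results, with the only subtlety being to remember to invoke Corollary~\ref{cor:cover2ei} at the index $s$ (the position of the ``$2$'' in the given codeword) rather than at $r$.
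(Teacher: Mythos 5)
Your argument is correct and matches the paper's reasoning: the paper derives this corollary exactly by combining Corollary~\ref{cor:cover2ei} (applied at the index $s$) with Lemma~\ref{lem:disjoint32}, ruling out a covering codeword of the form $3e_s+2e_k$ because its index set would meet $\{r,s\}$ at $s$. Nothing further is needed.
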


\begin{theorem}
If $\T$ is an integer lattice tiling with $\Upsilon_n$ then either
$n=2^t-1$ or $n=3^t-1$ for some $t >0$.
\end{theorem}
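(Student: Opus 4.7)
The plan is to split the argument according to how each point $2 e_r$ is covered by the tiling. By \Cref{cor:cover2ei}, for every $r$ the covering codeword is either $4 e_r$ or $3 e_r + 2 e_s$ for some $s \neq r$; I would handle separately the case in which $4 e_r \in \T$ for all $r$ and the case in which some codeword of the form $3 e_r + 2 e_s$ belongs to $\T$.

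Suppose first that $4 e_r \in \T$ for every $r$. Since $\T$ is a lattice this immediately gives $4 \Z^n \subseteq \T$, so $\T$ is periodic with period $4$. The volume formula for a lattice tiling yields $V(\T) = |\Upsilon_n| = 2^n (n+1)$, and this index must divide $[\Z^n : 4\Z^n] = 4^n$. Hence $n+1$ divides $2^n$, forcing $n+1 = 2^t$ for some $t > 0$.

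Suppose instead that $3 e_r + 2 e_s \in \T$ for some pair $r \neq s$. Then \Cref{cor:even32} gives that $n$ is even. The heart of the proof is to show that $12 e_t \in \T$ for \emph{every} coordinate $t$. Fix such a $t$. If $4 e_t \in \T$ then certainly $12 e_t \in \T$. Otherwise \Cref{cor:cover2ei} provides an $s'$ with $3 e_t + 2 e_{s'} \in \T$, and \Cref{cor:one32and4} then gives $4 e_{s'} \in \T$. Using that $\T$ is a lattice,
\[
2(3 e_t + 2 e_{s'}) - 2(4 e_{s'}) \;=\; 6 e_t \;\in\; \T,
\]
so $12 e_t \in \T$. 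Therefore $12 \Z^n \subseteq \T$, and $V(\T) = 2^n(n+1)$ must divide $12^n = 2^{2n} \cdot 3^n$. Since $n$ is even, $n+1$ is odd and coprime to $2^{2n}$; hence $n+1$ divides $3^n$, forcing $n+1 = 3^t$ for some $t > 0$.

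The crux of the argument is the extraction of $6 e_t \in \T$ in the second case: it relies on the lattice hypothesis in an essential way, since one must subtract an integer multiple of $4 e_{s'}$ from a multiple of $3 e_t + 2 e_{s'}$. This closure under integer linear combinations is exactly what is missing for a general (non-lattice) integer tiling, which is why the even-$n$ situation has to be revisited in Subsection~\ref{sec:Tiling_even} by a more delicate combinatorial argument.
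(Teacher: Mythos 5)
Your proof is correct apart from one arithmetic slip, and it takes a route that differs in its details from the paper's. The paper does not split into cases: it lets $k$ be the number of codewords of the form $3e_i+2e_j$, uses \Lref{lem:disjoint32} together with \Cref{cor:cover2ei} and \Cref{cor:one32and4} to exhibit a sublattice of $\T$ whose generator matrix is block-diagonal with $k$ blocks $\left[\begin{smallmatrix}3&2\\0&4\end{smallmatrix}\right]$ and $n-2k$ blocks $[4]$, deduces that $V(\T)=2^n(n+1)$ divides $3^k4^{n-k}$, and then finishes by parity of $n$: for odd $n$ it invokes \Tref{thm:odd_tiling}, and for even $n$ the oddness of $n+1$ forces $n+1=3^\ell$. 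You instead split on whether some $3e_r+2e_s$ lies in $\T$, and in each case exhibit the cruder sublattices $4\Z^n$ (giving $n+1\mid 2^n$) and $12\Z^n$ (giving, via \Cref{cor:even32}, that $n+1$ is odd and $n+1\mid 3^n$); the lattice hypothesis enters exactly where you say it does, in forming integer combinations of codewords. Your version bypasses \Tref{thm:odd_tiling} and any direct use of \Lref{lem:disjoint32} (though \Cref{cor:one32and4} quietly re-imports the latter), at the cost of invoking \Cref{cor:even32} explicitly; the paper's sublattice is tighter ($3^k4^{n-k}$ versus $12^n$), but either suffices for the divisibility conclusion. Your derivation of $6e_t\in\T$, i.e.\ a period-$12$ structure, is also a cheap lattice-only preview of what Subsection~\ref{sec:Tiling_even} must later establish for arbitrary integer tilings.

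The slip: the displayed combination should be $2(3e_t+2e_{s'})-(4e_{s'})=6e_t$; as written, $2(3e_t+2e_{s'})-2(4e_{s'})=6e_t-4e_{s'}$, which is not $6e_t$, although adding $4e_{s'}\in\T$ back still gives $6e_t\in\T$, so nothing breaks --- just fix the coefficient.
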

\begin{proof}
Assume that there are exactly $k$ codewords of the form
$3e_i+2e_j$ in $\T$. From Corollaries~\ref{cor:cover2ei}
and~\ref{cor:one32and4} and by Lemma~\ref{lem:disjoint32} the
lattice $\T$ contains a sublattice defined by these $k$ codewords
and $n-k$ codewords of the form $4 e_s$. The generator matrix
of this sublattice is a block-diagonal matrix with $k$~~$2 \times 2$
blocks of the form
$\begin{scriptsize} \left[ \begin{array}{cc} 3 & 2 \\ 0 & 4 \end{array} \right] \end{scriptsize}$
and $n-2k$~~$1 \times 1$ blocks of the
form~$\begin{scriptsize} \left[ \begin{array}{c} 4 \end{array} \right] \end{scriptsize}$.
The volume of
this sublattice is divided by the volume of the lattice $\T$.
The volume of the sublattice is $3^k 4^{n-k}$ and therefore, the volume of
the lattice $\T$ is of the form~$3^\ell 2^m$,
for some $\ell \geq 0$ and $m \geq 0$. On the otherhand the volume
of the lattice $\T$ is the volume of the shape $\Upsilon_n$, i.e. $2^n (n+1)$. By
Theorem~\ref{thm:odd_tiling} we have that if $n$ is odd then
$n=2^t-1$ for some $t > 0$. If $n$ is even then $n+1$ is odd and since
$3^\ell 2^m= 2^n (n+1)$ we must have that
$n=3^\ell-1$ for some $\ell >0$.
\end{proof}


\subsection{Tiling for even $n$}
\label{sec:Tiling_even}

In this subsection we will use the concept of packing triple system to
prove that if $n$ is even then $\T$ contains exactly $\frac{n}{2}$ codewords
of the form $3 e_r + 2e_s$, where the union of their nonzero coordinates is the
set of all $n$ coordinates. The structure of the codewords in $\T$ which was
proved in subsection~\ref{sec:Tiling_odd} and will be proved in this subsection,
combined with arguments based on reflections and translations
of the tiling, will imply a period 12 for the tiling when $n$ is even.
As a consequence we infer that if $n$ is even then
$n=3^t -1$, for some $t>0$.

A \emph{packing triple system} of order $n$ is a pair $(Q,\cB)$,
where $Q$ is an $n$-set and $\cB$ is a collection of 3-subsets of
$Q$, called \emph{blocks} such that each 2-subset of $Q$ is
contained in at most one block of $\cB$. Spencer~\cite{Spen}
proved that if $n \not\equiv 5~(\text{mod}~6)$ then
\begin{equation}
\label{eq:spencer} |\cB | \leq \left\lfloor \frac{n}{3}
\left\lfloor \frac{n-1}{2} \right\rfloor \right\rfloor ~.
\end{equation}

\begin{lemma}
\label{lem:cover_ij}
For each $1 \leq i < j \leq n$, the point $e_i + e_j$ is covered
by a codeword $X \in \T$, where $X=3e_i + 2e_j$ or $X=3e_j + 2e_i$
or $X \in \cD_3$, where $x_i=x_j=2$.
\end{lemma}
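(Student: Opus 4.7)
The plan is to imitate the case analysis used in Lemmas~\ref{lem:codewords32}, \ref{lem:codewords4}, and \ref{lem:codewords1222}: assume without loss of generality that $i=1$ and $j=2$, so $A=(1,1,0,\ldots,0)$, let $X\in\T$ be the (unique) codeword covering $A$, and use Lemma~\ref{lem:cover_cond} combined with $w_C(X)\geq 3$ (recall $\mathbf{0}\in\T$ and $X\neq \mathbf{0}$) to pin down~$X$. Applying Lemma~\ref{lem:cover_cond} to $A$ gives $x_1,x_2\in\{0,1,2,3\}$ and $x_k\in\{-1,0,1,2\}$ for $k\geq 3$, with at most one coordinate $m$ satisfying the ``edge'' condition $x_m\in\{a_m-1,a_m+2\}$.

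I will split the argument into three cases according to which type of deviation occurs. In the ``no-edge'' case, every $x_k$ lies in $\{a_k,a_k+1\}$, so each coordinate contributes at most~$1$ to $w_C(X)$, and since only $x_1$ or $x_2$ can reach the value $2$, one quickly gets $w_C(X)\leq 2$, a contradiction. In the case $x_m=a_m-1$, a coordinate-by-coordinate bound on $w_C$ (handling $m\in\{1,2\}$ and $m\geq 3$ separately) again forces $w_C(X)\leq 2$, a contradiction.

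The substantive case is $x_m=a_m+2$. If $m=1$, then $x_1=3$, and the inequality $w_C(X)\geq 3$ together with the edge condition forces $x_2=2$ and $x_k\in\{0,1\}$ for $k\geq 3$. The resulting $X$ lies in $\cD_1$, so Lemma~\ref{lem:codewords32} yields $X=3e_1+2e_2=3e_i+2e_j$. The case $m=2$ is symmetric and gives $X=3e_j+2e_i$. If $m\geq 3$, say $m=3$ after relabeling, then $x_3=2$, $x_1,x_2\in\{1,2\}$, $x_k\in\{0,1\}$ for $k\geq 4$, and $w_C(X)=x_1+x_2-1$, so $w_C(X)\geq 3$ forces $x_1=x_2=2$. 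Hence $X$ lies in $\{0,1,2\}^n$ with the value~$2$ occurring exactly at positions $1,2,3$, so $X\in\cD_3$ with $x_i=x_j=2$, as claimed.

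I do not anticipate a real obstacle; the argument is a bookkeeping exercise in the style of the preceding lemmas. The only point requiring minor care is that when $x_1=3$ (or $x_2=3$) the conclusion $X=3e_1+2e_2$ is not immediate from Lemma~\ref{lem:cover_cond} alone—one must invoke Lemma~\ref{lem:codewords32} to kill the possible extra $1$'s in coordinates $k\geq 3$. Everything else reduces to tracking the contribution of each coordinate to $w_C(X)$ and comparing it to the threshold $3$.
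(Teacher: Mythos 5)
Your proposal is correct and follows the same route as the paper, whose proof is just the one-line observation that the claim follows from Lemma~\ref{lem:cover_cond}, Lemma~\ref{lem:codewords32}, and the bound $w_C(X)\geq 3$ for nonzero codewords; you have simply written out the routine case analysis that this one-liner compresses, including the key use of Lemma~\ref{lem:codewords32} to eliminate spurious $1$'s when $x_i=3$ or $x_j=3$.
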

\begin{proof}
Follows from Lemmas~\ref{lem:cover_cond}
and~\ref{lem:codewords32} and the fract that for each nonzero codeword
$X \in \T$ we have $w_C(X) \geq 3$.
\end{proof}

Let
$$
\cF_1 \deff \{ \{ i,j \} ~:~ 3e_i +2e_j  \in \T \}
$$
and
$$
\cF_2 \deff \{ \{ i,j,k \} ~:~ 2e_i +2e_j +2e_k +\sum_{m \not\in
\{i,j,k \}} \alpha_m e_m \in \T, ~ \alpha_m \in \{0,1\} \} ~.
$$
Since $\T$ is a tiling it follows that each point $e_i + e_j$, $i
\neq j$, is covered by exactly one codeword of $\T$. As a
consequence of Lemma~\ref{lem:cover_ij},
we have that each
pair $\{ r,s \}$ is a subset of exactly one element from $\cF_1
\cup \cF_2$. Therefore, $\cF_2$ is a packing triple system of
order $n$.
\begin{theorem}
\label{thm:4mod6_tiling} If $\T$ is an integer tiling with
$\Upsilon_n$ then $n \not\equiv 4~(\text{mod}~6)$.
\end{theorem}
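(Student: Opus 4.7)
The plan is a double-counting argument on the $2$-subsets of $\{1,\ldots,n\}$, combined with Spencer's bound \eq{eq:spencer} on packing triple systems and with the disjointness of $\cF_1$ supplied by Lemma~\ref{lem:disjoint32}.

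First I would establish the identity $|\cF_1| + 3\,|\cF_2| = \binom{n}{2}$. For this, by Lemma~\ref{lem:cover_ij} each point $e_i + e_j$ is covered by a codeword whose shape corresponds either to a pair in $\cF_1$ (namely the pair $\{i,j\}$ itself) or to a triple in $\cF_2$ containing $\{i,j\}$; uniqueness of covers in a tiling ensures that every pair $\{i,j\}$ lies in exactly one such member of $\cF_1 \cup \cF_2$, and counting incidences by type yields the identity. Lemma~\ref{lem:disjoint32} furnishes the second ingredient: the pairs of $\cF_1$ are pairwise disjoint subsets of $\{1,\ldots,n\}$, so $|\cF_1| \leq \lfloor n/2 \rfloor$.

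Now suppose for contradiction that $n \equiv 4 \pmod 6$, and write $n = 6k+4$. Since $n \not\equiv 5 \pmod 6$, Spencer's bound applies to the packing triple system $\cF_2$ and gives
$$|\cF_2| \;\leq\; \left\lfloor \frac{n}{3}\left\lfloor \frac{n-1}{2}\right\rfloor \right\rfloor \;=\; 6k^2 + 6k + 1.$$
On the other hand, combining $|\cF_1| + 3\,|\cF_2| = \binom{n}{2} = 18k^2 + 21k + 6$ with $|\cF_1| \leq 3k+2$ yields $3\,|\cF_2| \geq 18k^2 + 18k + 4$, and integrality of $|\cF_2|$ then forces $|\cF_2| \geq 6k^2 + 6k + 2$. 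These two inequalities contradict each other.

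The step that needs the most care is the pair identity: one must verify that every codeword covering $e_i + e_j$ is indeed of one of the two prescribed forms. This is exactly the content of Lemma~\ref{lem:cover_ij} (whose proof, in turn, invokes the minimum cross-weight $w_C(X) \geq 3$ and the case analysis behind Lemmas~\ref{lem:codewords32} and~\ref{lem:codewords1222}). After that, the argument is a short arithmetic check; the relevance of the residue class $4 \pmod 6$ is that Spencer's bound falls exactly one unit below the double-count's rounded-up lower bound there.
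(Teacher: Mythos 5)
Your proposal is correct and follows essentially the same route as the paper: the counting identity $|\cF_1| + 3|\cF_2| = \binom{n}{2}$ from Lemma~\ref{lem:cover_ij} and uniqueness of covers, Spencer's bound \eq{eq:spencer} on the packing triple system $\cF_2$, and the disjointness of the pairs in $\cF_1$ from Lemma~\ref{lem:disjoint32}. The only difference is cosmetic: you derive a lower bound on $|\cF_2|$ (using integrality) and contradict Spencer directly, while the paper derives $|\cF_1| \geq \frac{n}{2}+1$ and contradicts $|\cF_1| \leq \frac{n}{2}$.
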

\begin{proof}
Assume $\T$ is an integer tiling with $\Upsilon_n$, $n \equiv
4~(\text{mod}~6)$. By (\ref{eq:spencer}) we have that
$$
|\cF_2| \leq \frac{n^2 -2n-2}{6}~.
$$
Since each pair $\{ i,j \} \subset \{ 1,2, \ldots ,n \}$ is
contained in either $\cF_1$ or $\cF_2$ it follows that
$$
|\cF_1| + 3 | \cF_2 | = \binom{n}{2}~.
$$
Hence, $|\cF_1| \geq \frac{n}{2} +1$.
Lemma~\ref{lem:disjoint32} implies that $|\cF_1| \leq \frac{n}{2}$, a contradiction.
\end{proof}
By using the same arguments as in the proof of
Theorem~\ref{thm:4mod6_tiling} we have that if $n \equiv 0$ or
$2~(\text{mod}~6)$ then $|\cF_1| \geq \frac{n}{2}$. Hence, by
Lemma~\ref{lem:disjoint32} we infer
\begin{lemma}
\label{lem:half_n_32} If $n$ is even and $\T$ is an integer tiling
with $\Upsilon_n$, then there are exactly $\frac{n}{2}$ codewords
of the form $3e_r +2e_s$.
\end{lemma}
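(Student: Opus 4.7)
The plan is to combine three facts already established in the paper: the equation $|\cF_1| + 3|\cF_2| = \binom{n}{2}$ arising from Lemma~\ref{lem:cover_ij}, Spencer's upper bound~\eqref{eq:spencer} on a packing triple system, and the disjointness statement of Lemma~\ref{lem:disjoint32}. The first two furnish a lower bound of $n/2$ on $|\cF_1|$ whenever $n$ is even, while Lemma~\ref{lem:disjoint32} furnishes the matching upper bound.

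\textbf{Step 1 (Reduction to $n \equiv 0$ or $2 \pmod{6}$).} By Theorem~\ref{thm:4mod6_tiling} we already know that no integer tiling with $\Upsilon_n$ exists when $n \equiv 4 \pmod{6}$. Since $n$ is assumed even, the only residues left to consider are $n \equiv 0 \pmod{6}$ and $n \equiv 2 \pmod{6}$; in particular $n \not\equiv 5 \pmod{6}$, so Spencer's bound~\eqref{eq:spencer} applies.

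\textbf{Step 2 (Lower bound $|\cF_1| \geq n/2$).} Because $n$ is even, $\lfloor (n-1)/2 \rfloor = (n-2)/2$, and in both remaining residue classes the value $n(n-2)/6$ is an integer, so~\eqref{eq:spencer} gives
\[
|\cF_2| \leq \frac{n(n-2)}{6}.
\]
By Lemma~\ref{lem:cover_ij} every pair $\{i,j\}$ is contained in exactly one element of $\cF_1 \cup \cF_2$, hence
\[
|\cF_1| + 3|\cF_2| = \binom{n}{2} = \frac{n(n-1)}{2}.
\]
Combining the two displays yields
\[
|\cF_1| \geq \frac{n(n-1)}{2} - 3 \cdot \frac{n(n-2)}{6} = \frac{n(n-1) - n(n-2)}{2} = \frac{n}{2}.
\]

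\textbf{Step 3 (Upper bound $|\cF_1| \leq n/2$).} By Lemma~\ref{lem:disjoint32}, the $2$-subsets $\{r,s\}$ associated with distinct codewords of the form $3e_r + 2e_s \in \T$ are pairwise disjoint as subsets of $\{1,2,\ldots,n\}$. Thus $|\cF_1|$ is at most the number of parts in a partial partition of an $n$-set into pairs, i.e.\ $|\cF_1| \leq \lfloor n/2 \rfloor = n/2$.

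Combining Steps~2 and~3 gives $|\cF_1| = n/2$, so there are exactly $n/2$ codewords of the form $3e_r + 2e_s$ in $\T$. The only genuinely non-routine ingredient is the Spencer bound, which has been imported; everything else is bookkeeping. The main thing to be careful about is verifying that $n(n-2)/6$ is indeed the exact value of $\lfloor n/3 \cdot \lfloor (n-1)/2 \rfloor \rfloor$ for $n \equiv 0,2 \pmod{6}$ (rather than the strictly smaller value one would get for $n \equiv 4 \pmod 6$), because this is precisely why the lower bound equals $n/2$ and not $n/2 + 1$ in these cases.
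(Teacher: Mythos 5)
Your proof is correct and follows essentially the same route as the paper: the paper also obtains $|\cF_1|\geq \frac{n}{2}$ for $n\equiv 0,2 \pmod 6$ by repeating the Spencer-bound counting from Theorem~\ref{thm:4mod6_tiling}, and then gets the matching upper bound from the disjointness in Lemma~\ref{lem:disjoint32}. Your write-up simply makes the floor-function arithmetic for the two residue classes explicit, which the paper leaves implicit.
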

Combing Lemmas~\ref{lem:disjoint32} and~\ref{lem:half_n_32} we
infer
\begin{cor}
\label{cor:32cover_n} If $n$ is even and $\T$ is an integer tiling
with $\Upsilon_n$, then there are exactly $\frac{n}{2}$ codewords of the
form $3e_r+2e_s$ and the set $\{ i ~:~ 3e_i +2e_j \in \T$ or
$3e_j+2e_i \in \T \}$ contains all the integers between 1 and $n$.
\end{cor}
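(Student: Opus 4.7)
The plan is to combine the two preceding lemmas directly; no new combinatorial input is needed. By \Lref{lem:half_n_32} we already know that $\T$ contains exactly $\frac{n}{2}$ codewords of the form $3e_r+2e_s$, so the first half of the statement is immediate. It remains to check that the supports of these codewords exhaust $\{1,2,\ldots,n\}$.

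To handle the second claim, I would enumerate the $\frac{n}{2}$ codewords as $3e_{r_1}+2e_{s_1},\,3e_{r_2}+2e_{s_2},\,\ldots,\,3e_{r_{n/2}}+2e_{s_{n/2}}$ and let $S_k=\{r_k,s_k\}$. \Lref{lem:disjoint32} asserts that whenever $k\neq k'$ we have $S_k\cap S_{k'}=\varnothing$, so the sets $S_1,\ldots,S_{n/2}$ are pairwise disjoint two-element subsets of $\{1,2,\ldots,n\}$. Counting,
\[
\Bigl|\bigcup_{k=1}^{n/2} S_k\Bigr|=\sum_{k=1}^{n/2}|S_k|=2\cdot\frac{n}{2}=n.
\]
Since this union is contained in $\{1,2,\ldots,n\}$ and has cardinality $n$, equality of sets forces $\bigcup_{k} S_k=\{1,2,\ldots,n\}$, which is precisely the statement that every integer $i\in\{1,\ldots,n\}$ lies in the support of some codeword of the form $3e_r+2e_s$ in $\T$.

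There is no real obstacle here; the whole content was packed into \Lref{lem:disjoint32} (the pairwise disjointness of supports) and \Lref{lem:half_n_32} (the exact count $\frac{n}{2}$). The corollary is just the bookkeeping conclusion that $\frac{n}{2}$ disjoint pairs of indices must partition an $n$-set, and I would present it in essentially the two-sentence form above.
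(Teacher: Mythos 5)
Your proposal is correct and matches the paper exactly: the corollary is stated there as an immediate consequence of combining \Lref{lem:half_n_32} (exactly $\frac{n}{2}$ codewords of the form $3e_r+2e_s$) with \Lref{lem:disjoint32} (pairwise disjoint supports), and your counting argument that $\frac{n}{2}$ disjoint pairs must cover all $n$ coordinates is precisely the intended bookkeeping.
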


Let $\T'$ be the tiling of $\Z^n$ with $\Upsilon_n$ defined by
$\T' \deff \{ X ~:~ -X \in \T \}$. Since $\T'$ is a tiling of
$\Z^n$ with $\Upsilon_n$, it follows that the lemmas and the
corollaries of Section~\ref{sec:nonexistence} hold also for
$\T'$. They imply new lemmas and corollaries for $\T$. For
example we have
\begin{cor}
For each $r$, $1 \leq r \leq n$, the point $-2 e_r$ is covered by
a codeword $X \in \T$, where either $X=-4e_r$ or $X=-3e_r -2e_s$
for some $s \neq r$.
\end{cor}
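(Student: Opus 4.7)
The plan is to apply Corollary~\ref{cor:cover2ei} directly to the reflected tiling $\T'$ and then translate the resulting statement back to $\T$ via the defining bijection $X \in \T'$ iff $-X \in \T$. Since the authors have already asserted that $\T'$ is an integer tiling of $\Z^n$ with $\Upsilon_n$, every lemma and corollary established earlier in Section~\ref{sec:nonexistence} for a generic tiling applies verbatim to $\T'$.

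First I would verify the prerequisites. We have $\mathbf{0} \in \T$ by our standing assumption, hence $\mathbf{0} = -\mathbf{0} \in \T'$, so $\T'$ is also a tiling containing the origin, and the machinery built under exactly this hypothesis is available.

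Next I would apply Corollary~\ref{cor:cover2ei} with $\T$ replaced by $\T'$: for each coordinate $r$ with $1 \leq r \leq n$, the point $2e_r$ is covered in $\T'$ by a codeword $X' \in \T'$ of the form $X' = 4e_r$ or $X' = 3e_r + 2e_s$ for some $s \neq r$.

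Finally I would translate back. The definition $\T' = \{ X : -X \in \T \}$ gives $-X' \in \T$, which yields either $-4e_r \in \T$ or $-3e_r - 2e_s \in \T$ for some $s \neq r$. This is the codeword announced in the statement: the reflection $X \mapsto -X$ sends the covering configuration at $2e_r$ in $\T'$ to the claimed analogous configuration at $-2e_r$ in $\T$. The only thing worth being careful about is the bookkeeping of what ``covering'' becomes under reflection, since $\Upsilon_n$ is centrally symmetric about $(-\tfrac{1}{2},\dots,-\tfrac{1}{2})$ rather than about the origin; but this has already been absorbed into the authors' definition of $\T'$ and its status as a tiling, so no new obstacle arises.
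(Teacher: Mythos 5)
Your proposal matches the paper's own derivation: the paper gives this corollary no separate proof, presenting it precisely as an instance of applying Corollary~\ref{cor:cover2ei} to the reflected tiling $\T'$ and pulling the codewords back through $X \mapsto -X$. The covering bookkeeping you flag is a real subtlety (since $\Upsilon_n$ is symmetric about $(-\tfrac{1}{2},\ldots,-\tfrac{1}{2})$, under the paper's core convention the reflected codeword literally covers $-2e_r-\one$ rather than $-2e_r$), but the paper's wording glosses over exactly the same point, and the substantive conclusion actually used later is only the membership claim that $-4e_r\in\T$ or $-3e_r-2e_s\in\T$, which your argument establishes correctly.
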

In a similar way we can define $2^n$ tilings of $\Z^n$
with $\Upsilon_n$. For $A=(a_1 , a_2 , \ldots , a_n )$, where $a_i
\in \{ -1 ,1 \}$, let $\T_A$ be the tiling of $\Z^n$ with
$\Upsilon_n$ defined by $\T_A \deff \{ (x_1,x_2, \ldots ,x_n) ~:$
$(a_1 x_1 , a_2 x_2 , \ldots , a_n x_n ) \in \T \}$. As for
$\T'=\T_{(-1,-1,\ldots,-1)}$, each lemma and each corollary of
Section~\ref{sec:nonexistence} holds for $\T_A$ and thus implies
new claims on $\T$. Without loss of generality we assume (based on
Lemma~\ref{lem:permute}, Corollaries~\ref{cor:one32and4}
and~\ref{cor:32cover_n}) that $3e_{2i-1} +2e_{2i} \in \T$ and
$4e_{2i} \in \T$, for all $1 \leq i \leq \frac{n}{2}$.

\begin{lemma}
\label{lem:23implies-4} If $X=3e_r +2e_s \in \T$ then $-4e_s \in
\T$.
\end{lemma}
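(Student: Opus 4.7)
The plan is to analyze how the point $-2e_s \in \Z^n$ is covered by $\T$ and to show that only the codeword $-4e_s$ is admissible. The main tools will be \Cref{cor:cover2ei}, which narrows the list of candidate coverings to two families; the reflection device $\T_A$ already set up in this subsection; and \Cref{cor:32cover_n} together with \Lref{lem:disjoint32} to rule out the ``bad'' family.

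First, I will apply \Cref{cor:cover2ei} to the reflected tiling $\T_A$, where $A=(a_1,\ldots,a_n)$ has $a_s=-1$ and $a_i=1$ for $i\neq s$. Transporting the conclusion back to $\T$ through the involution $\sigma_A\colon(y_1,\ldots,y_n)\mapsto(a_1y_1,\ldots,a_ny_n)$, I will conclude that $-2e_s$ is covered in $\T$ by either $-4e_s$ itself or by a codeword of the form $-3e_s+2e_t$ for some $t\neq s$. So it suffices to rule out the second possibility. Suppose toward contradiction that $-3e_s+2e_t\in\T$; applying $\sigma_A$ produces a ``$3$-$2$'' codeword $3e_s+2e_t\in\T_A$ with support $\{s,t\}$.

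Now I will invoke \Cref{cor:32cover_n}, which is available because $n$ is even, to obtain a $3$-$2$ codeword of $\T$ involving the index $t$; it has support $\{t,t'\}$ for some $t'\neq t$. Combining the hypothesis $3e_r+2e_s\in\T$ with \Lref{lem:disjoint32} applied to $\T$ forces $\{t,t'\}\cap\{r,s\}=\varnothing$; in particular $t,t'\neq s$, so $a_t=a_{t'}=1$, and consequently this codeword is fixed by $\sigma_A$ and is also a $3$-$2$ codeword of $\T_A$. But then $\T_A$ contains two distinct $3$-$2$ codewords, with supports $\{s,t\}$ and $\{t,t'\}$ (distinct because $s\neq t'$), whose supports both contain $t$. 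This contradicts \Lref{lem:disjoint32} applied to $\T_A$, so $-4e_s\in\T$.

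The main obstacle I anticipate is the bookkeeping required in applying the preceding lemmas and corollaries to the reflected tiling $\T_A$: since $\sigma_A(\Upsilon_n)$ is only a translate of $\Upsilon_n$ rather than $\Upsilon_n$ itself, one has to invoke the paper's convention that every lemma and corollary of Section~\ref{sec:nonexistence} transfers to $\T_A$, and keep careful track of how codewords of $\T_A$ correspond to those of $\T$ under the coordinate sign-flip. Once that correspondence is in place the combinatorial argument above is essentially immediate.
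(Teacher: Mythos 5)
Your reduction via the reflected tiling $\T_A$ (with $a_s=-1$ and all other $a_i=1$) and \Cref{cor:cover2ei} is fine, and your treatment of the case $t\notin\{r,s\}$ works: the $3$-$2$ codeword of $\T$ whose support contains $t$ is then distinct from $3e_r+2e_s$, so \Lref{lem:disjoint32} makes its support disjoint from $\{r,s\}$, it is fixed by $\sigma_A$, and you obtain two distinct $3$-$2$ codewords of $\T_A$ whose supports share $t$, contradicting \Lref{lem:disjoint32} for $\T_A$. However, there is a genuine gap at $t=r$, i.e.\ the possibility $3e_s+2e_r\in\T_A$ (equivalently $-3e_s+2e_r\in\T$). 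In that case the unique $3$-$2$ codeword of $\T$ whose support contains $t=r$ is $3e_r+2e_s$ itself, so \Lref{lem:disjoint32} applied to $\T$ gives nothing (it only speaks about \emph{distinct} codewords); your assertion $\{t,t'\}\cap\{r,s\}=\varnothing$ is false there (in fact $\{t,t'\}=\{r,s\}$), this codeword is not fixed by $\sigma_A$ (its $s$-entry changes sign), and no contradiction is produced. The packing condition alone does not exclude this case either, since $d_C\bigl(3e_r+2e_s,\,-3e_s+2e_r\bigr)=4\geq 3$.

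The paper closes exactly this case: if $3e_s+2e_r\in\T_A$, then \Cref{cor:one32and4} applied to $\T_A$ yields $4e_r\in\T_A$, and since the $r$-th coordinate is not flipped by $A$, $4e_r\in\T$; but $d_C\bigl(3e_r+2e_s,\,4e_r\bigr)=1<3$, contradicting \Cref{cor:P2}. Adding this short argument (or any equivalent exclusion of $t=r$) repairs your proof. With that patch your route is essentially the paper's: the paper also works in $\T_A$, first forcing the pair $\{r,s\}$ to be matched in $\T_A$ via \Cref{cor:32cover_n} (using the normalization that all other $3$-$2$ codewords avoid coordinate $s$ and hence survive the reflection), ruling out $3e_s+2e_r\in\T_A$ as above, and then applying \Cref{cor:one32and4} to $3e_r+2e_s\in\T_A$ to get $4e_s\in\T_A$, i.e.\ $-4e_s\in\T$; your variant merely obtains the initial dichotomy from \Cref{cor:cover2ei} and handles the generic alternative by the disjointness of supports instead.
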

\begin{proof}
Without loss of generality we will prove the claim for $r=1$ and $s=2$; let
$A=(1,-1,1,\ldots,1)$. Since $3e_{2i-1} +2e_{2i} \in \T$, for all
$2 \leq i \leq \frac{n}{2}$, it follows that $3e_{2i-1} +2e_{2i}
\in \T_A$, for all $2 \leq i \leq \frac{n}{2}$, and by
Corollary~\ref{cor:32cover_n} we have that either $3e_1 + 2e_2 \in
\T_A$ or $2e_1 + 3e_2 \in \T_A$. If $2e_1+3e_2 \in \T_A$ then
Corollary~\ref{cor:one32and4} implies that $Y=4e_1 \in \T_A$.
Therefore, $Y=4e_1 \in \T$, and since $d_C(X,Y)=1$ we have a
contradiction. Hence, $3e_1 + 2e_2 \in \T_A$, and therefore, by
Corollary~\ref{cor:one32and4} we have that $4e_2 \in \T_A$, i.e.
$-4e_2 \in \T$.
\end{proof}
\begin{cor}
\label{cor:-4iff4} $4e_s \in \T$ if and only if $-4e_s \in \T$.
\end{cor}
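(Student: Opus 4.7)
I will prove the forward implication $4e_s\in\T\Rightarrow -4e_s\in\T$ and obtain the converse by applying that implication to the reflected tiling $\T'=\{X:-X\in\T\}$, which the text has already recorded is itself a tiling with $\Upsilon_n$. Under that reflection, $-4e_s\in\T \Leftrightarrow 4e_s\in\T'$ and $4e_s\in\T \Leftrightarrow -4e_s\in\T'$, so the converse falls out of the same statement once the forward direction is established.

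For the forward direction, assume $4e_s\in\T$. A short check against Lemma~\ref{lem:cover_cond} shows that $4e_s$ covers the point $2e_s$: the unique coordinate at which $x_i\in\{a_i-1,a_i+2\}$ is $i=s$, where $x_s=a_s+2$, and all other coordinates match exactly. The same lemma shows that any candidate codeword of shape $3e_s+2e_t$ with $t\neq s$ would also cover $2e_s$, since in that case the lone ``outlier'' coordinate is $t$ (where $x_t=a_t+2$) while $x_s=a_s+1$. Because $\T$ is a tiling, every point of $\Z^n$ is covered by exactly one codeword, so the presence of $4e_s$ excludes every codeword of the form $3e_s+2e_t$ from $\T$. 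Now invoke Corollary~\ref{cor:32cover_n}: the coordinate $s$ must appear in some codeword of the form $3e_r+2e_s$ or $3e_s+2e_t$. The second alternative has just been ruled out, so there exists $r\neq s$ with $3e_r+2e_s\in\T$, and Lemma~\ref{lem:23implies-4} then yields $-4e_s\in\T$.

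The only delicate step is the observation that $4e_s$ and any would-be $3e_s+2e_t$ both contain the same point $2e_s$ in their translates of $\Upsilon_n$, which is what forces at most one of them to belong to $\T$. Once that is in hand, Corollary~\ref{cor:32cover_n} supplies a codeword with $s$ in the ``$2$''-position, Lemma~\ref{lem:23implies-4} produces $-4e_s$, and the reflection $X\mapsto -X$ delivers the converse with no further computation.
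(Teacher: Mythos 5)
Your proof is correct and follows essentially the route the paper leaves implicit: the packing/unique-covering of $2e_s$ forces $3e_r+2e_s\in\T$ via Corollary~\ref{cor:32cover_n}, Lemma~\ref{lem:23implies-4} then gives $-4e_s\in\T$, and the converse comes from applying the same argument to the reflected tiling $\T'$, exactly as the paper's setup intends.
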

\begin{lemma}
\label{lem:23implies-23} If $X=3e_r +2e_s \in \T$ then $-3e_r
-2e_s \in \T$.
\end{lemma}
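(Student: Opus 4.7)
The plan is to use a reflection argument analogous to the proof of Lemma~\ref{lem:23implies-4}. By Lemma~\ref{lem:permute} and the standing WLOG of this subsection, I may assume $r=1$ and $s=2$, so that $X=3e_1+2e_2\in\T$. Let $B=(-1,-1,1,1,\ldots,1)$ and consider the reflected tiling $\T_B$. Because $B$ only flips coordinates~$1$ and~$2$, every codeword $3e_{2i-1}+2e_{2i}$ with $i\ge 2$ (having zero entries in those coordinates) is fixed by the reflection; thus $3e_{2i-1}+2e_{2i}\in\T_B$ for $i\ge 2$, and similarly $4e_{2i}\in\T_B$ for $i\ge 2$.

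Applying Corollary~\ref{cor:32cover_n} to $\T_B$, there are exactly $n/2$ codewords of the form $3e_a+2e_b$ in $\T_B$, and the union of their nonzero coordinates is $\{1,\ldots,n\}$. Since the $n/2-1$ codewords above already account for coordinates $3,4,\ldots,n$, the remaining codeword must have $\{a,b\}=\{1,2\}$. Hence either $3e_1+2e_2\in\T_B$ (Case~1) or $3e_2+2e_1\in\T_B$ (Case~2).

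In Case~1, the definition of $\T_B$ immediately gives $-3e_1-2e_2\in\T$, which is the desired conclusion. In Case~2, I derive a contradiction. Corollary~\ref{cor:one32and4} applied to $\T_B$ with the codeword $3e_2+2e_1$ gives $4e_1\in\T_B$. After swapping coordinates~$1$ and~$2$ in $\T_B$ (justified by Lemma~\ref{lem:permute}), the resulting tiling satisfies the standing WLOG of this subsection. Lemma~\ref{lem:23implies-4} applied to this permuted tiling with the codeword $3e_1+2e_2$ then yields $-4e_2$ there, which translates back to $-4e_1\in\T_B$, i.e., $4e_1\in\T$. But then both $4e_1$ and $X=3e_1+2e_2$ lie in $\T$, and
\[
d_C(4e_1,X)=\max(0,|4-3|-1)+\max(0,|0-2|-1)=0+1=1<3,
\]
contradicting Corollary~\ref{cor:P2}.

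The main subtlety will be to verify the hypotheses needed to invoke Lemma~\ref{lem:23implies-4} on (the permuted version of)~$\T_B$ in Case~2; specifically, that $3e_{2i-1}+2e_{2i}$ and $4e_{2i}$ both lie in that tiling for every $i$, $1\le i\le n/2$. This is precisely what is provided by the Case~2 assumption $3e_2+2e_1\in\T_B$ together with Corollary~\ref{cor:one32and4}, combined with the preservation facts already established for $i\ge 2$.
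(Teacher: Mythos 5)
Your proof is correct and follows essentially the same route as the paper: reflect coordinates $1,2$ via $\T_A$ with $A=(-1,-1,1,\ldots,1)$, use Corollary~\ref{cor:32cover_n} to force the remaining codeword onto coordinates $\{1,2\}$, and eliminate the swapped case by deducing $4e_1\in\T$ via Lemma~\ref{lem:23implies-4} and the cross-distance-$1$ contradiction with $X$. The only difference is cosmetic: you explicitly permute coordinates and re-verify the standing normalization before invoking Lemma~\ref{lem:23implies-4}, whereas the paper applies that lemma to $\T_A$ directly.
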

\begin{proof}
Without loss of generality we will prove the claim for $r=1$ and $s=2$; let
$A=(-1,-1,1,\ldots,1)$. Since $3e_{2i-1} +2e_{2i} \in \T$, for all
$2 \leq i \leq \frac{n}{2}$, it follows that $3e_{2i-1} +2e_{2i}
\in \T_A$, for all $2 \leq i \leq \frac{n}{2}$, and by
Corollary~\ref{cor:32cover_n} we have that either $3e_1 + 2e_2 \in
\T_A$ or $2e_1 + 3e_2 \in \T_A$. If $2e_1+3e_2 \in \T_A$ then
Lemma~\ref{lem:23implies-4} implies that $-4e_1 \in \T_A$.
Therefore, $Y=4e_1 \in \T$, and since $d_C(X,Y)=1$ we have a
contradiction. Hence, $3e_1 + 2e_2 \in \T_A$, and therefore we
have that $-3e_1 - 2e_2 \in \T$.
\end{proof}
\begin{cor}
\label{cor:23iff-23} $3e_r +2e_s \in \T$ if and only if $-3e_r
-2e_s \in \T$.
\end{cor}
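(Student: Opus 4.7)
The forward implication ``$3e_r+2e_s\in\T \Rightarrow -3e_r-2e_s\in\T$'' is precisely the content of Lemma~\ref{lem:23implies-23}, so nothing more is needed for that direction.

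For the converse, the plan is to exploit the symmetry of the family of tilings $\T_A$ introduced just before Lemma~\ref{lem:23implies-4}. Take $A=(-1,-1,\ldots,-1)$ and consider $\T'\deff\T_A=\{-X~:~X\in\T\}$. Since $\T'$ is itself an integer tiling of $\Z^n$ with $\Upsilon_n$, every result established for a generic such tiling applies to $\T'$ as well, including Lemma~\ref{lem:23implies-23}. Now assume $-3e_r-2e_s\in\T$. Then, by the definition of $\T'$, the codeword $3e_r+2e_s$ lies in $\T'$. Applying Lemma~\ref{lem:23implies-23} to $\T'$ gives $-3e_r-2e_s\in\T'$, and translating this back to $\T$ yields $3e_r+2e_s\in\T$, which is exactly the desired converse.

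The only subtlety is making sure that the hypotheses used in the derivation of Lemma~\ref{lem:23implies-23} (in particular, the normalization that $3e_{2i-1}+2e_{2i}\in\T$ and $4e_{2i}\in\T$ for all $1\le i\le n/2$) are available after the reflection. This is not a real obstacle: those hypotheses were themselves imposed without loss of generality, via a permutation of coordinates and a sign reflection, so they can equally be imposed for $\T'$. Thus the two directions combine to give the desired equivalence.
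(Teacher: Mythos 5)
Your proposal is correct and matches the paper's (implicit) argument: the paper states the corollary as an immediate consequence of Lemma~\ref{lem:23implies-23} together with the standing observation that every lemma of the section applies to the reflected tiling $\T'=\T_{(-1,\ldots,-1)}$, which is exactly the reflection argument you give for the converse. Your remark that the WLOG normalization can be re-imposed for $\T'$ via Lemma~\ref{lem:permute} correctly handles the only subtlety.
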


\begin{lemma}
\label{lem:cw12e} If $3e_r +2e_s \in \T$ then $12e_r,12e_s \in
\T$.
\end{lemma}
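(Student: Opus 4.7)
Without loss of generality (by Lemma~\ref{lem:permute} and the running normalization) take $r=1$ and $s=2$; then $3e_r+2e_s\in\T$ and Corollary~\ref{cor:one32and4} gives $4e_s\in\T$, while Corollaries~\ref{cor:23iff-23} and~\ref{cor:-4iff4} give $-3e_r-2e_s,-4e_s\in\T$. The overall plan is to apply the structural results (Corollaries~\ref{cor:cover2ei}, \ref{cor:one32and4}, \ref{cor:32cover_n}) not to $\T$ itself but to carefully chosen \emph{translates} $\T-U$ (each of which is again an integer tiling of $\Z^n$ by $\Upsilon_n$ with $\mathbf{0}$ a codeword once $U\in\T$), and to use the cross-distance bound of Corollary~\ref{cor:P2} against the codewords of $\T$ already established to rule out all but one candidate at each step. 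The two halves of the lemma are handled separately.

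To prove $12e_s\in\T$, I would apply Corollary~\ref{cor:32cover_n} to the translated tiling $\T-4e_s$. The codewords $3e_{2i-1}+2e_{2i}\in\T$ for $i\geq 2$ have zero $s$-coordinate and so remain of the form $3e_{r'}+2e_{s'}$ in $\T-4e_s$, accounting for $\frac{n}{2}-1$ of the $\frac{n}{2}$ required $3$-$2$ codewords. The missing one must involve the pair $\{r,s\}$ and is therefore either $3e_r+2e_s$ or $3e_s+2e_r$ in $\T-4e_s$, which translates back to $3e_r+6e_s\in\T$ or $2e_r+7e_s\in\T$. The second option, combined with Corollary~\ref{cor:one32and4} applied in $\T-4e_s$, would force $4e_r+4e_s\in\T$; but $d_C(4e_r+4e_s,\,3e_r+2e_s)=1<3$, contradicting Corollary~\ref{cor:P2}. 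Hence $3e_r+6e_s\in\T$, and Corollary~\ref{cor:one32and4} applied in $\T-4e_s$ further yields $8e_s\in\T$. Running the same argument on $\T-8e_s$ then produces $3e_r+10e_s\in\T$ and $12e_s\in\T$.

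The proof of $12e_r\in\T$ is the main obstacle, because the cascade cannot be performed along a pure $e_r$-axis: by Corollary~\ref{cor:cover2ei} and the uniqueness of a covering codeword, $2e_r$ is covered by $3e_r+2e_s$ alone, so $4e_r\notin\T$. I would instead cascade through composite shifts, beginning with $\T-(3e_r+2e_s)$. Corollary~\ref{cor:32cover_n} applied there forces coordinate $r$ to lie in some pair of the induced partition, represented by a codeword of the form $3e_r+2e_t$ or $3e_t+2e_r$ in $\T-(3e_r+2e_s)$; this translates back to a codeword of $\T$ with a large $e_r$-component (candidates include $7e_r+2e_s$ and $6e_r+2e_s+2e_t$). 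A cross-distance analysis against the $e_s$-line codewords produced in the first half (together with their reflections) eliminates all but one of these candidates. Iterating the same procedure through the successive shifted tilings $\T-(7e_r+2e_s)$, $\T-(11e_r+2e_s)$, $\ldots$, and combining with an appropriate $e_s$-shift at the final stage, forces $12e_r\in\T$. The delicate part is precisely the bookkeeping at each iteration: because the composite cascade enlarges the menu of candidate codewords at every step, pinning each one down requires a careful cross-distance contradiction against the (growing) collection of codewords already built.
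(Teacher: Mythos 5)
Your overall mechanism---applying the structural results to translates $\T-U$ with $U\in\T$ and killing candidates via Corollary~\ref{cor:P2}---is indeed the engine of the paper's proof, and your first half is close to workable, but it contains a false step: translating by $4e_s$ shifts \emph{every} codeword, so $3e_{2i-1}+2e_{2i}\in\T$ puts $3e_{2i-1}+2e_{2i}-4e_s$, not $3e_{2i-1}+2e_{2i}$, into $\T-4e_s$; hence your claim that $\frac{n}{2}-1$ of the $3$--$2$ codewords of $\T-4e_s$ are inherited, and in particular that $r$ is still paired with $s$ there, is unjustified as stated. It is repairable by the very trick you already use: if $r$ were paired with some $t\neq s$ via $3e_r+2e_t\in\T-4e_s$, then $3e_r+2e_t+4e_s\in\T$ has cross distance $2$ from $3e_r+2e_s$; if via $3e_t+2e_r$, Corollary~\ref{cor:one32and4} in the translate gives $4e_r+4e_s\in\T$, at cross distance $1$ from $3e_r+2e_s$. (The paper gets $8e_s,12e_s\in\T$ much more cheaply, by applying Corollary~\ref{cor:-4iff4} to $\T-4e_s$ and then $\T-8e_s$.)

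The genuine gap is the second half. The elimination you invoke cannot work: the candidate $7e_r+2e_s$ (i.e.\ $4e_r\in\T-(3e_r+2e_s)$) satisfies $d_C(7e_r+2e_s,3e_r+2e_s)=3$ and has even larger cross distance to $\mathbf{0}$, $4e_s$, $8e_s$, $12e_s$, $3e_r+6e_s$, $3e_r+10e_s$ and all their negatives, so no cross-distance comparison with the $e_s$-line codewords or their reflections rules it out. The missing idea is to apply the reflection results \emph{inside the translate}: $\T-(3e_r+2e_s)$ contains $\mathbf{0}$ and $-3e_r-2e_s$, so Corollary~\ref{cor:23iff-23} puts $3e_r+2e_s$ in it, i.e.\ $6e_r+4e_s\in\T$; only now is $7e_r+2e_s$ excluded (cross distance $1$), which also shows that your proposed iteration through $\T-(7e_r+2e_s)$ and $\T-(11e_r+2e_s)$ cannot be carried out, since those shifts are not codewords. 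The paper's chain instead increments by $3e_r+2e_s$: repeating the reflection-on-translate step gives $6e_r+4e_s$, $9e_r+6e_s$, $12e_r+8e_s\in\T$; then $\T-(12e_r+8e_s)$ contains $\mathbf{0}$ and $-3e_r-2e_s$, so Corollary~\ref{cor:23iff-23} and Lemma~\ref{lem:23implies-4} give $-4e_s$ in that translate, i.e.\ $12e_r+4e_s\in\T$, and Corollary~\ref{cor:-4iff4} applied once more yields $12e_r\in\T$. Without this use of Corollaries~\ref{cor:23iff-23}, \ref{cor:-4iff4} and Lemma~\ref{lem:23implies-4} in the translated tilings, the bookkeeping you defer does not merely get delicate---it has no mechanism to converge.
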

\begin{proof}
By Corollary~\ref{cor:one32and4} we have that $4e_s \in \T$.
The translation $\T_1
= -4e_s +\T$ is a tiling with $\Upsilon_n$ for which ${\bf 0},
-4e_s \in \T_1$. It follows by Corollary~\ref{cor:-4iff4} that
$4e_s \in \T_1$ and hence $8e_s \in \T$. Similarly, $12e_s \in
\T$.

Similarly, by Corollary~\ref{cor:23iff-23} we have that ${\bf 0},
3e_r +2e_s \in \T$ implies that $6e_r +4e_s , 9e_r+6e_s,12e_r+8e_s
\in \T$. The translation $\T_1 = -12e_r-8e_s +\T$ is a tiling with $\Upsilon_n$
for which ${\bf 0},-3e_r-2e_s \in \T_1$. By
Corollary~\ref{cor:23iff-23} and Lemma~\ref{lem:23implies-4} we
have that $-4e_s \in \T_1$, and hence $12e_r +4e_s \in \T$.
Similarly, by Corollary~\ref{cor:-4iff4} we have $12e_r +4e_s ,
12e_r +8e_s \in \T$ which implies that $12e_r \in \T$.
\end{proof}

\begin{cor}
\label{cor:period12} If $n$ is even and $\T$ is an integer tiling
with $\Upsilon_n$, then $\T$ is a periodic tiling with period 12.
\end{cor}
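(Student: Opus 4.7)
The goal is to deduce periodicity from the accumulated information about the structure of $\T$. The plan proceeds in two stages: first, obtain $12 e_i \in \T$ for every coordinate $i$, and then bootstrap this to genuine periodicity via a translation argument.

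For the first stage, I would apply Corollary~\ref{cor:32cover_n}, which guarantees that $\T$ contains exactly $\frac{n}{2}$ codewords of the form $3e_r + 2e_s$ and that the union of all indices $r,s$ appearing in them is $\{1,2,\ldots,n\}$. For each such codeword $3e_r + 2e_s \in \T$, Lemma~\ref{lem:cw12e} yields both $12 e_r \in \T$ and $12 e_s \in \T$. Since every coordinate $i \in \{1,2,\ldots,n\}$ occurs in at least one of these pairs, we conclude that $12 e_i \in \T$ for all $1 \leq i \leq n$.

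The second stage upgrades this from ``$\mathbf 0$ and $12 e_i$ are both in $\T$'' to the full statement that $X + 12 e_i \in \T$ whenever $X \in \T$. For this I would fix an arbitrary codeword $X \in \T$ and consider the shifted set $\T_X \deff -X + \T$. By Lemma~\ref{lem:shiftT}, $\T_X$ is again an integer tiling with $\Upsilon_n$, and $\mathbf{0} \in \T_X$. Since all the preceding lemmas and corollaries (including Corollary~\ref{cor:32cover_n} and Lemma~\ref{lem:cw12e}) were proved for an arbitrary integer tiling of $\Upsilon_n$ that contains $\mathbf{0}$, the first-stage conclusion applies verbatim to $\T_X$: we obtain $12 e_i \in \T_X$ for every $i$, i.e., $X + 12 e_i \in \T$ for every $i$.

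The claim then follows immediately from Lemma~2.4, which characterizes periodic tilings by exactly this shift condition. The one subtlety worth flagging is that the WLOG normalization $3e_{2i-1} + 2e_{2i} \in \T$ adopted in Subsection~\ref{sec:Tiling_even} is attached to the particular tiling $\T$, and the translated tiling $\T_X$ may have a completely different set of $3e_r + 2e_s$ codewords. This is harmless because Corollary~\ref{cor:32cover_n} and Lemma~\ref{lem:cw12e} are insensitive to which specific pairing arises; all that matters is that some pairing exists whose nonzero coordinates exhaust $\{1,\ldots,n\}$. So the argument really is as short as stated, and no further obstacle stands in the way.
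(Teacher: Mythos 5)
Your proposal is correct and follows exactly the argument the paper intends for this corollary (which it leaves implicit): combine Corollary~\ref{cor:32cover_n} with Lemma~\ref{lem:cw12e} to get $12e_i \in \T$ for every $i$, then translate by $-X$ via Lemma~\ref{lem:shiftT} to transfer this to every codeword, and conclude with the characterization of periodic tilings (your ``Lemma~2.4'' is actually the third lemma of Section~2, a harmless numbering slip). Your remark that the WLOG pairing $3e_{2i-1}+2e_{2i}$ need not persist under translation, and that this is immaterial, is exactly the right point to flag.
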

\begin{theorem}
\label{thm:even_tiling} If $\T$ is an integer tiling with
$\Upsilon_n$, where $n$ is an even integer, then $n=3^t-1$ for
some $t > 0$.
\end{theorem}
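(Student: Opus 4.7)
The plan is to derive this theorem essentially as a direct corollary of \Cref{cor:period12}, paralleling the proof of \Tref{thm:odd_tiling}. The point is that once periodicity is established, the divisibility of the volume of $\Upsilon_n$ by an appropriate power of the period does all the work.

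First I would invoke \Cref{cor:period12} to conclude that $\T$ is a periodic tiling with period $12$. This means the tiling is determined by the codewords lying in $\{0,1,\ldots,11\}^n$, and since every point of $\Z^n$ is covered exactly once by a translate of $\Upsilon_n$, the volume $|\Upsilon_n| = 2^n(n+1)$ must divide $12^n = 2^{2n}\cdot 3^n$.

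Next I would extract the needed arithmetic consequence. From $2^n(n+1) \mid 2^{2n}\cdot 3^n$ we get $n+1 \mid 2^n\cdot 3^n$. Since $n$ is even, $n+1$ is odd and so shares no factor with $2^n$; therefore $n+1 \mid 3^n$. But any odd divisor of a power of $3$ is itself a power of $3$, so $n+1 = 3^t$ for some $t\geq 0$. Because $n\geq 2$, we must have $t > 0$, giving $n = 3^t - 1$ as required.

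The main obstacle is nothing internal to this theorem itself, since all of the genuine combinatorial work has been packaged into \Cref{cor:period12}; that corollary in turn rests on the careful reflection/translation analysis via the tilings $\T_A$ carried out earlier in this subsection. The only thing one must be careful about here is the parity observation $\gcd(n+1,2)=1$ when $n$ is even, which is what cleanly separates the factor $3^n$ from the factor $2^n$ in $12^n$; without this parity hypothesis the argument would only yield $n+1 = 2^a 3^b$, matching the two possible answers $n=2^t-1$ and $n=3^t-1$ from the combined theorem.
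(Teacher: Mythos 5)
Your proposal is correct and matches the paper's own proof: invoke the period-12 corollary, deduce that $2^n(n+1)$ divides $12^n$, and use the oddness of $n+1$ (since $n$ is even) to force $n+1$ to be a power of $3$. The only difference is that you spell out the parity/divisibility step slightly more explicitly than the paper does.
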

\begin{proof}
By Corollary~\ref{cor:period12} we have that $\T$ is a periodic
tiling with period 12. Therefore, the size of $\Upsilon_n$ divides
$12^n$. The size of $\Upsilon_n$ is $2^n (n+1)$ and hence $n+1$ divides $2^n
3^n$. Since $n$ is even it follows that $n+1$ is odd and thus
$n=3^t-1$ for some $t> 0$.
\end{proof}

Theorems~\ref{thm:odd_tiling} and~\ref{thm:even_tiling} are
combined to obtain

\begin{cor}
If $\T$ is an integer tiling with $\Upsilon_n$, then either
$n=2^t-1$ or $n=3^t-1$, for some $t >0$.
\end{cor}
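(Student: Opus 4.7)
The statement is the final corollary combining the two main theorems of the section, so the plan is essentially a two-line case analysis on the parity of $n$. I would first note that every positive integer $n$ is either odd or even, so the conclusion follows immediately by applying \Tref{thm:odd_tiling} and \Tref{thm:even_tiling} respectively.

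More concretely, I would proceed as follows. Suppose $\T$ is an integer tiling of $\Z^n$ with $\Upsilon_n$. If $n$ is odd, \Tref{thm:odd_tiling} (proved in Subsection~\ref{sec:Tiling_odd} via Corollary~\ref{cor:odd4} on period~$4$, combined with the divisibility $2^n(n+1) \mid 4^n$) forces $n+1$ to be a power of $2$, i.e.\ $n=2^t-1$ for some $t>0$. If instead $n$ is even, \Tref{thm:even_tiling} (proved in Subsection~\ref{sec:Tiling_even} via \Cref{cor:period12} on period~$12$, combined with the divisibility $2^n(n+1) \mid 12^n = 2^{2n}3^n$ together with the fact that $n+1$ is odd) forces $n+1$ to be a power of $3$, i.e.\ $n=3^t-1$ for some $t>0$.

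There is no real obstacle here — all the work has been done in the two preceding subsections. The only thing to check is that the two cases are genuinely exhaustive, which is trivial, and that $t>0$ in each case, which follows from $n\geq 1$ (so that $n+1\geq 2$ has a nontrivial prime factorization). Thus the corollary is just the disjunction of the conclusions of \Tref{thm:odd_tiling} and \Tref{thm:even_tiling}, and the proof will be a single short paragraph recording this case split.
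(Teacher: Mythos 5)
Your proposal is correct and matches the paper exactly: the corollary is stated there as the immediate combination of \Tref{thm:odd_tiling} (odd $n$, period $4$) and \Tref{thm:even_tiling} (even $n$, period $12$), with the parity case split being the entire argument. Nothing further is needed.
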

\begin{cor}
If $\T$ is a $\begin{tiny} \frac{1}{2}
\end{tiny} \Z$-tiling with a $(0.5,n)$-cross, then either
$n=2^t-1$ or $n=3^t-1$, for some $t >0$.
\end{cor}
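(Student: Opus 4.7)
The plan is to reduce this statement immediately to the preceding corollary (which covers integer tilings with $\Upsilon_n$) by means of the scaling correspondence established earlier in Section~\ref{sec:basic}. Recall the lemma that says $\T$ is an integer tiling with $\Upsilon_n$ if and only if $0.5\T$ is a $\frac{1}{2}\Z$-tiling with a $(0.5,n)$-cross. Since this is an ``if and only if'' statement linking the two regimes, the corollary for the $(0.5,n)$-cross should follow with essentially no extra work from the corresponding fact for $\Upsilon_n$.

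Concretely, the first step is to start with an arbitrary $\frac{1}{2}\Z$-tiling $\T$ with a $(0.5,n)$-cross, so by definition $\T \subseteq 0.5\,\Z^n$. Define $\T^* \deff 2\T$, and observe that $\T^* \subseteq \Z^n$. The next step is to check that $\T^*$ is an integer tiling with $\Upsilon_n$: this is precisely the content of the scaling lemma, since $\T = 0.5\,\T^*$ is by hypothesis a $\frac{1}{2}\Z$-tiling with a $(0.5,n)$-cross, and $\Upsilon_n$ was defined to be the shape obtained by doubling a $(0.5,n)$-cross. Finally, invoking the preceding corollary (which combines Theorem~\ref{thm:odd_tiling} and Theorem~\ref{thm:even_tiling}) yields $n = 2^t-1$ or $n = 3^t-1$ for some $t>0$, as required.

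There is no real obstacle here; the statement is a direct translation of the integer-tiling result across the scaling bijection, and no new combinatorics about covering, packing, or cross distances is needed. The only point worth being explicit about is that the scaling by $2$ (or $\frac{1}{2}$) sends disjoint translations of one shape bijectively to disjoint translations of the other and preserves the covering of all of $\R^n$, so the tiling property genuinely transfers; this is exactly what the scaling lemma records.
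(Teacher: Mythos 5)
Your proposal is correct and matches the paper's (implicit) argument exactly: scale the $\frac{1}{2}\Z$-tiling by $2$, use the scaling lemma of Section~\ref{sec:basic} to get an integer tiling with $\Upsilon_n$, and apply the preceding corollary combining Theorems~\ref{thm:odd_tiling} and~\ref{thm:even_tiling}. Nothing further is needed.
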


\section{Tilings based on Perfect Codes}
\vspace{-.25ex} \label{sec:perfect}

In Section~\ref{sec:nonexistence} we proved that a $\begin{tiny}
\frac{1}{2} \end{tiny} \Z$-tiling with $(0.5,n)$-cross exists only
if $n=2^t-1$ or $n=3^t-1$, for some $t >0$. In this section we will prove
that this necessary condition is also sufficient. Surprisingly,
two constructions which produce the related tilings are based on
perfect codes in the Hamming scheme. If $n=2^t-1$ then the perfect code is
binary of length~$n$ and the construction of the tiling is very
simple. If $n=3^t-1$ then the perfect code is ternary of length~$\frac{n}{2}$.

We will refer only to perfect codes with minimum Hamming distance
three. A code $\cC$ has \emph{minimum Hamming distance} $d$ if for every
two distinct codewords $X,Y \in \cC$ we have $d_H(X,Y) \geq d$.
The minimum Hamming distance of $\cC$ will be denoted by
$d_H(\cC)$. Similarly, we define the \emph{minimum cross distance} of a
code. A code $\cC$ of length $n$ over $\Z_q$, with minimum Hamming
distance 3, is called \emph{perfect} if for each word $A \in
\Z_q^n$ there exists a codeword $X \in \cC$ such that $d_H (A,X)
\leq 1$. Such a code is also called a single-error-correcting
perfect code since it is capable to correct a single transmission
error~\cite{McSl77}. The \emph{sphere} of radius $\rho$ centered
at $A=(a_1,a_2,\ldots,a_n)$ is the set $\{ B \in \Z_q^n ~:~ d_H
(A,B) \leq \rho \}$. The code $\cC$ is a single-error-correcting perfect
code if and only if $\cC$ is a tiling of $\Z_q^n$ with a sphere of
radius one. Binary ($q=2$) perfect codes exists if and only if
$n=2^t-1$, where $t > 0$. Ternary ($q=3$) perfect codes exists if and
only if $n=\frac{3^t-1}{2}$, where $t > 0$. These are the only perfect
codes which are of interest in this section. Finally, we note that
a perfect code is identified by its size, its minimum distance,
and the fact that each element of $\Z_q^n$ is covered by at least
one codeword. One can easily verify that given any two of these
parameters one can determine whether the code is perfect or not perfect. This
fact will be used throughout this section.

\begin{remark}
\label{rem:perfect} A perfect code $\cC$ of length $n$ over $\Z_q$
is known to exist if $q$ is a power of a prime and $n = \frac{q^t
-1}{q-1}$, where $t
>0$. The related sphere of radius one can be viewed as a
$(q-1,n)$-semicross or as a $(\frac{q-1}{2},n)$-cross. Thus, these
perfect codes form tilings with the related semicrosses and
crosses. Only if $q$ is a prime some of the known tilings are
lattice tilings (they are related to linear perfect codes). If $q$
is not a prime then the tiling of $\Z^n$ is done first by using
any one-to-one mapping between GF($q$) (on which the codes are
defined) and $\Z_q$. Tilings of this type have applications in
flash memories~\cite{Sch11}. If $q=2$ then $\cC$ is a tiling of
$\Z_2^n$ with $(0.5,n)$-cross and $E(\cC)$ forms a tiling of
$\Z^n$ with $(0.5,n)$-cross.
\end{remark}

\subsection{Binary Perfect Codes}

Since the size of of a sphere with radius one in $\Z_2^n$ is $n+1$,
it follows that a binary perfect code of length $n=2^t-1$ has $2^{n-t}$
codewords.

\begin{theorem}
\label{thm:binary_cross} There exists an one-to-one correspondence
between the set of binary perfect codes of length $n=2^t -1$ and
the set of integer tilings with $\Upsilon_n$ in which each
codeword has only even entries.
\end{theorem}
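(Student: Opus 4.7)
The plan is to establish an explicit bijection between the two sets, with a forward construction $\cC \mapsto \T(\cC)$ and a reverse construction $\T \mapsto \cC(\T)$ that are mutually inverse.

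\emph{From codes to tilings.} Given a binary perfect single-error-correcting code $\cC$ of length $n = 2^t - 1$, I will set
\begin{equation*}
\T(\cC) \deff \{2Z : Z \in \Z^n,\ Z \bmod 2 \in \cC\} = 2\cC + (4\Z)^n,
\end{equation*}
so that by construction every vector in $\T(\cC)$ has even entries. To verify that $\T(\cC)$ is a tiling I check packing and covering. For packing (Corollary~\ref{cor:P2}), take distinct $2Z_1, 2Z_2 \in \T(\cC)$. If $Z_1 \equiv Z_2 \pmod 2$ then $Z_1 - Z_2$ is a nonzero even vector, so some coordinate has $|z_{1,i}-z_{2,i}|\geq 2$, contributing $\max(0, 2\cdot 2 -1)=3$ to $d_C(2Z_1,2Z_2)$. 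Otherwise $d_H(Z_1 \bmod 2, Z_2 \bmod 2)\geq 3$ by perfectness of $\cC$, and each such coordinate contributes at least $1$ to $d_C$, so again $d_C(2Z_1,2Z_2)\geq 3$.

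\emph{Covering.} The key structural observation is that $U \in \Upsilon_n$ iff there is at most one index $j$ with $U_j \in \{-2,1\}$ and $U_i \in \{-1,0\}$ for all other $i$. Hence, for each $A \in \Z^n$, defining $\alpha_i \deff \lceil a_i/2\rceil$ (the unique integer with $a_i-2\alpha_i \in \{-1,0\}$) and $\beta_i$ as the unique integer with $a_i - 2\beta_i \in \{-2,1\}$, the centers $Z$ with $A \in 2Z + \Upsilon_n$ are precisely: the vector $\alpha$, and the $n$ vectors obtained from $\alpha$ by replacing a single coordinate $\alpha_j$ with $\beta_j$. A direct computation gives $\beta_i \equiv \alpha_i + 1 \pmod 2$ in all cases, so the parities of these $n+1$ candidate centers fill out exactly the Hamming ball of radius $1$ around $\alpha \bmod 2$ in $\Z_2^n$. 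Since $\cC$ is perfect, exactly one point of this ball lies in $\cC$, i.e., exactly one candidate $Z$ satisfies $Z \bmod 2 \in \cC$. Thus $A$ is covered by exactly one translation of $\Upsilon_n$ in $\T(\cC)$.

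\emph{From tilings to codes.} Conversely, let $\T$ be an integer tiling with $\Upsilon_n$ all of whose codewords have even entries. Since $n=2^t-1$ is odd, Corollary~\ref{cor:odd4} gives that $\T$ has period $4$, so $\T = \T_0 + (4\Z)^n$ where $\T_0 \deff \T \cap \{0,1,2,3\}^n$; the even-entry assumption forces $\T_0 \subseteq \{0,2\}^n$. Volume gives $|\T_0| = 4^n/|\Upsilon_n| = 4^n/(2^n(n+1)) = 2^{n-t}$. Define
\begin{equation*}
\cC(\T) \deff \{c \in \{0,1\}^n : 2c \in \T_0\},
\end{equation*}
so $|\cC(\T)| = 2^{n-t}$. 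For distinct $c_1,c_2 \in \cC(\T)$, each coordinate satisfies $|c_{1,i}-c_{2,i}|\in\{0,1\}$, hence $d_C(2c_1,2c_2) = \sum_i \max(0, 2|c_{1,i}-c_{2,i}|-1) = d_H(c_1,c_2)$, and the packing property yields $d_H(\cC(\T))\geq 3$. Together with $|\cC(\T)|(n+1)=2^n$, this meets the Hamming sphere-packing bound with equality, so $\cC(\T)$ is a binary perfect code.

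The maps $\cC \mapsto \T(\cC)$ and $\T \mapsto \cC(\T)$ are easily seen to be mutual inverses, giving the bijection. The main obstacle is the structural analysis of $\Upsilon_n$ that identifies exactly the $n+1$ candidate centers for an arbitrary point and recognizes their parity patterns as a Hamming ball, which is what turns the tiling's covering property into the defining property of a perfect binary code.
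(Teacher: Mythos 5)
Your proposal is correct, and the underlying correspondence is exactly the paper's: your forward map $\cC \mapsto 2\cC + (4\Z)^n$ is the paper's $2E(\cC)$, and your reverse map is the paper's $0.5\T \cap \{0,1\}^n$ after using Corollary~\ref{cor:odd4} to reduce the period-$4$ tiling to its part in $\{0,1,2,3\}^n$. The difference is in how the tiling property is certified. The paper is terse: it notes that $d_C(X,Y)=d_H(0.5X,0.5Y)$ for $X,Y\in\{0,2\}^n$ and that the cardinalities match, and then leans on its standing remark at the start of Section~\ref{sec:perfect} that any two of size, minimum distance, and covering identify a perfect code (equivalently, a packing of the right density modulo $4$ is a tiling). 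You instead prove covering directly: using the characterization of Lemma~\ref{lem:cover_cond} you enumerate the $n+1$ even centers that could cover a given point $A$ (the coordinatewise ceiling $\alpha$ of $A/2$ and its $n$ one-coordinate modifications), observe that their parities are exactly the Hamming ball of radius one around $\alpha \bmod 2$, and conclude from perfectness of $\cC$ that exactly one of them lies in your tiling; this makes the forward direction self-contained (and renders your separate $d_C\geq 3$ packing check redundant, since the exact-cover argument already gives uniqueness). Your reverse direction matches the paper's counting route (period $4$, $|\T_0|=2^{n-t}$, cross distance equals Hamming distance on $\{0,2\}^n$, sphere-packing bound with equality). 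Both arguments are sound; yours trades the paper's brevity for an explicit verification of the covering property that the paper delegates to its density principle.
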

\begin{proof}
Note first, that by Corollary~\ref{cor:odd4} a tiling $\T$ of
$\Z^n$ with $\Upsilon_n$ is periodic with period 4 and hence it
can be reduced to a tiling of $\Z_4^n$ with $\Upsilon_n$.

The size of an $(1,n)$-semicross is equal the size of a
$(0.5,n)$-cross. It implies that the number of codewords in a
binary single-error-correcting perfect code $\cC$ of length $n=2^t
-1$ is equal the number of codewords in a tiling $\T$ of $\Z_4^n$
with $\Upsilon_n$. If $X,Y \in \{ 0,2 \}^n$ then $0.5X$ and $0.5Y$
are binary words and it is easy to verify that $d_C(X,Y) = d_H
(0.5X,0.5Y)$.

Therefore, if $\cC$ is a binary perfect code of length $n=2^t -1$
then $2 E(\cC)$ is a tiling of $\Z^n$ with $\Upsilon_n$ in which
each codeword has only even entries. Similarly, if $\T$ is a
tiling of $\Z^n$ with $\Upsilon_n$, in which each codeword has
only even entries, then $0.5 \T \cap \{ 0,1 \}^n$ is a binary
perfect code.
\end{proof}

\begin{cor}
There exists an one-to-one correspondence between the set of
binary perfect codes of length $n=2^t -1$ and the set of integer
tilings with $(0.5,n)$-cross.
\end{cor}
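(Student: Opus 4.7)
The plan is to chain \Tref{thm:binary_cross} with the scaling correspondence between tilings by $\Upsilon_n$ and tilings by the $(0.5,n)$-cross. The key observation, already implicit in the setup of the paper, is that an integer tiling with the $(0.5,n)$-cross corresponds under the $\times 2$ scaling to an integer tiling with $\Upsilon_n$ whose every codeword has only even entries.

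More precisely, first I would argue the ``forward'' direction: if $\T$ is an integer tiling with $\Upsilon_n$ in which every codeword has only even entries, then $0.5\T \subseteq \Z^n$, and since scaling a tiling with $\Upsilon_n$ by $0.5$ yields a tiling with the $(0.5,n)$-cross, the set $0.5\T$ is an integer tiling with the $(0.5,n)$-cross. Conversely, if $\T'$ is an integer tiling with the $(0.5,n)$-cross, then $\T' \subseteq \Z^n$ and $2\T' \subseteq (2\Z)^n$; scaling the tiling by $2$ turns the $(0.5,n)$-cross into $\Upsilon_n$, so $2\T'$ is an integer tiling with $\Upsilon_n$ whose codewords all have even entries. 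The maps $\T \mapsto 0.5\T$ and $\T' \mapsto 2\T'$ are mutual inverses, so together they establish a bijection between the class of integer tilings with $\Upsilon_n$ having only even entries and the class of integer tilings with the $(0.5,n)$-cross.

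Composing this bijection with the one given by \Tref{thm:binary_cross} between binary perfect codes of length $n=2^t-1$ and integer tilings with $\Upsilon_n$ having only even entries yields the desired one-to-one correspondence. There is no real obstacle here: the entire content of the corollary is the packaging of \Tref{thm:binary_cross} through the elementary scaling identification, and the only thing to check is that scaling preserves the tiling property and takes $\Z^n$-centered cubes to $\Z^n$-centered cubes exactly under the parity condition on the entries.
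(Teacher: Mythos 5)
Your proposal is correct and matches the paper's intended argument: the corollary is stated without a separate proof precisely because it is \Tref{thm:binary_cross} composed with the scaling identification (noted already in Section~\ref{sec:basic}) that $0.5\T$ is an integer tiling with the $(0.5,n)$-cross exactly when the integer tiling $\T$ with $\Upsilon_n$ has only even entries. Your explicit check that $\T \mapsto 0.5\T$ and $\T' \mapsto 2\T'$ are mutually inverse bijections is exactly the routine verification the paper leaves implicit.
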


Do there exists any integer tilings with $\Upsilon_n$, where $n=2^t-1$,
except for those implied by Theorem~\ref{thm:binary_cross}? The
answer is that there exist many such tilings. Let $\cC$ be a
binary code of length $n$. Its \emph{punctured} code $\cC'$ of
length $n-1$ is defined by $\cC' \deff \{ c ~:~ (c,x) \in \cC, ~ x
\in \{0,1\} \}$.

\begin{construction}
Let $\cC$ be a binary perfect code of length $n$ and $\cC'$ its
punctured code. Let $\cC'_e$ and $\cC'_o$ be the set of codewords
from $\cC'$ with even weight and odd weight, respectively. We
define a code $\cC^*
\deff \cC_1^* \cup \cC_2^*$ over $\Z_4^n$, where
$$
\cC_1^* \deff \{ (2c,2x) ~:~ c \in \cC'_e ,~ (c,x) \in \cC \} ~
\mbox{and} ~ \cC_2^* \deff \{ (2c,2x+1) ~:~ c \in \cC'_o ,~ (c,x)
\in \cC \}~.
$$
\end{construction}
\begin{theorem}
$E(\cC^*)$ defines a tiling of $\Z^n$ with $\Upsilon_n$, in which
not all entries are even.
\end{theorem}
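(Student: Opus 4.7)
The plan is to verify that $E(\cC^*)$ is a tiling of $\Z^n$ with $\Upsilon_n$ via a ``volume plus packing'' argument, and then to exhibit a codeword with an odd entry.

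First I would count $|\cC^*|$. Because $d_H(\cC) \geq 3$, no two codewords of $\cC$ agree on the first $n-1$ coordinates, so puncturing gives a bijection $\cC \to \cC'$; thus $|\cC'| = |\cC| = 2^{n-t}$, with $n = 2^t - 1$. Splitting $\cC'$ by weight parity, $|\cC^*| = |\cC'_e| + |\cC'_o| = 2^{n-t}$, so $|\cC^*| \cdot |\Upsilon_n| = 2^{n-t} \cdot 2^n (n+1) = 4^n$, exactly the size of the period box $\{0,1,2,3\}^n$. Because every coordinate of $\Upsilon_n$ lies in the $4$-long interval $\{-2,-1,0,1\}$, reduction mod $4$ is injective on each translate $X+\Upsilon_n$, and the volume count then forces any packing to be a tiling; by Corollary~\ref{cor:P2} it thus suffices to check $d_C(X,Y) \geq 3$ for every two distinct $X, Y \in E(\cC^*)$.

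For the packing, I would write $X - Y = D + 4\vec{k}$, with $D = X_1 - X_2 \in \{-3,\ldots,3\}^n$ ($X_1, X_2 \in \cC^*$) and $\vec{k} \in \Z^n$. A coordinatewise optimization gives the key identity $\min_{\vec{k} \in \Z^n} w_C(D + 4\vec{k}) = |\{i : D_i \equiv 2 \pmod 4\}|$, while the case $X_1 = X_2$ with $\vec{k} \neq 0$ already contributes at least $3$ since a single nonzero coordinate of $4\vec{k}$ weighs at least $3$. Writing $X_j = (2c^{(j)}, z^{(j)})$ and splitting by the parities of the weights of $c^{(1)}, c^{(2)}$: if both lie in $\cC'_e$ (or both in $\cC'_o$), then every $D_i \in \{-2,0,2\}$ and the count equals the Hamming distance of the lifted codewords $(c^{(j)}, x^{(j)}) \in \cC$, which is $\geq 3$; if the parities differ, then $D_n$ is odd so position $n$ contributes $0$, and the count equals $d_H(c^{(1)}, c^{(2)})$, which is odd (as the weights of $c^{(1)}, c^{(2)}$ have different parity) and at least $d_H((c^{(1)}, x^{(1)}), (c^{(2)}, x^{(2)})) - 1 \geq 2$, hence $\geq 3$.

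Finally, I would show $\cC_2^* \neq \varnothing$ by contradiction. Suppose $\cC'_o = \varnothing$; fix any $c^* \in \cC$ and observe that by perfectness with $d_H(\cC) \geq 3$, the vector $c^* + e_1 + e_2$ is covered by a codeword $\tilde{c} = c^* + e_1 + e_2 + e_j$ with $j \notin \{1, 2\}$, and $c^* + e_1 + e_3$ by $\tilde{c}' = c^* + e_1 + e_3 + e_k$ with $k \notin \{1, 3\}$. If $j < n$ or $k < n$, then the puncturing of that codeword differs from that of $c^*$ by a weight-$3$ vector, hence has odd weight, contradicting $\cC'_o = \varnothing$; if $j = k = n$, then $d_H(\tilde{c}, \tilde{c}') = 2$ contradicts the min-distance. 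So $\cC'_o \neq \varnothing$, and every codeword of $\cC_2^*$ has an odd last entry. The main obstacle here is the mixed-parity sub-case of the packing step, where the naive bound from $d_H(\cC) \geq 3$ yields only $d_H(c^{(1)}, c^{(2)}) \geq 2$, and one must exploit the parity of the Hamming distance --- itself forced by the different weight parities of $c^{(1)}, c^{(2)}$ --- to upgrade this to $3$.
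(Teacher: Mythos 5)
Your proof is correct and follows essentially the same route as the paper: establish minimum cross distance $3$ for $E(\cC^*)$ by splitting into the same-parity and mixed-parity cases (using that the distance between an even-weight and an odd-weight punctured codeword is odd, hence $\geq 3$), and then conclude the packing is a tiling from the cardinality count $|\cC^*|\cdot|\Upsilon_n|=4^n$. You additionally make explicit two points the paper leaves to the reader --- the mod-$4$ volume argument and an actual proof (valid for $n\geq 3$) that $\cC'_o\neq\varnothing$, which the paper merely asserts is easy to verify.
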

\begin{proof}
Since $d_H(\cC) =3$ it follows that $d_H(\cC') =d_H(\cC'_e)
=d_H(\cC'_o) =2$ and $d_C (\cC_1^*)=d_C(\cC_2^*)=3$. If $\tilde{c}_1
\in \cC'_e$ and $\tilde{c}_2 \in \cC'_o$ then $d_H (\tilde{c}_1 ,
\tilde{c}_2)$ is an odd integer. Hence, since $d_H(\cC')=2$, it
follows that $d_H (\tilde{c}_1 , \tilde{c}_2) \geq 3$. Therefore,
if $\tilde{c}_1^* \in \cC_1$ and $\tilde{c}_2^* \in \cC_2$ then $d_C
(\tilde{c}_1^* , \tilde{c}_2^*) \geq 3$ and thus $d_C (\cC^*) \geq
3$. The minimum distance of the code $\cC^*$ and its number of
codewords implies that $\cC^*$ is a tiling of $\Z_4^n$ with
$\Upsilon_n$. It is easy to verify that $\cC'_o$ has at least one
codeword (in fact it can be proved that it contains exactly half
of the codewords) and hence the last entry in at least one of the
codewords of $\cC^*$ is 1 or 3.
\end{proof}

\begin{example}
The following code forms a tiling of $\Z_4^7$ with $\Upsilon_7$:
$$\begin{array}{cccc}
0000000 & 0000222 & 2222000 & 2222222 \\
2200201 & 2200023 & 0022201 & 0022023 \\
2020021 & 2020203 & 0202021 & 0202203 \\
2002002 & 2002220 & 0220002 & 0220220
\end{array}$$
\end{example}

\begin{remark}
Let $\xi$ be a mapping from $\Z_4$ to $\Z_2$ defined by
$\xi(0)=\xi(1)=0$, $\xi(2)=\xi(3)=1$. If $\T$ forms a tiling of
$\Z_4^n$ with $\Upsilon_n$ then the code $\cC = \{ \xi (X) ~:~ X
\in \T \}$, where $\xi (x_1,x_2,\ldots,x_n) =
(\xi(x_1),\xi(x_2),\ldots,\xi(x_n))$ is a binary perfect code of
length $n$.
\end{remark}
\begin{remark}
By Corollary~\ref{cor:odd4} an integer tiling with $\Upsilon_n$,
where $n$ is odd, has period 4. Hence, the related $\begin{tiny}
\frac{1}{2} \end{tiny} \Z$-tiling $\T$ with $(0.5,n)$-cross has
period 2. It implies that this tiling is also a tiling with the
$(1,n)$-semicross (even if $\T$ is not a $\Z$-tiling).
\end{remark}

\subsection{Ternary Perfect Codes}

Let $n=3^t-1$, where $t > 0$, and $\nu = \frac{n}{2}$. Since the
size of a sphere with radius one in $\Z_3^\nu$ is $2\nu+1$,
it follows that a ternary perfect code of length $\nu$ has $3^{\nu-t}$
codewords. Let ${\Lambda}_n$
be the lattice generated by the basis $\{ 3e_{2i-1} +2e_{2i} ~:~ 1
\leq i \leq \nu \} \cup \{ 4e_{2i} ~:~ 1 \leq i \leq \nu \}$. Let
$G_n$ be the quotient group ${\mathbb{Z}}^n / {\Lambda}_n$. Recall that
we denote the set of integers in $\Z_p$ without the group structure
by $\tilde{\Z}_p \deff \{ 0,1, \ldots , p-1 \}$.
The following lemma can be readily verified.

\begin{lemma}
\label{lem:rep_2} The group $G_2$ has size 12 and the 12 representatives of
elements from $G_2$ (the cosets of $\Lambda_2$ in $\Z^2$) can be
taken as $\tilde{\Z}_3 \times \tilde{\Z}_4$.
\end{lemma}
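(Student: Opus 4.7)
The plan is to carry out the standard reduction/index computation for a rank-$2$ integer lattice presented by an upper-triangular generator matrix.

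First I would read off the generator matrix of $\Lambda_2$ from the given basis $\{3e_1+2e_2,\ 4e_2\}$, namely
$$
\bG \;=\; \begin{pmatrix} 3 & 2 \\ 0 & 4 \end{pmatrix},
$$
and use the fact (recalled just before the statement in Section~\ref{sec:basic}) that $V(\Lambda_2)=|\det\bG|=12$, together with the standard identification $|G_2|=|\Z^2/\Lambda_2|=V(\Lambda_2)$, to conclude that the quotient group has exactly $12$ elements. This takes care of the first assertion.

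For the second assertion I would exhibit an explicit reduction algorithm showing that every $(a,b)\in\Z^2$ is congruent modulo $\Lambda_2$ to a unique element of $\tilde{\Z}_3\times\tilde{\Z}_4$. Given $(a,b)$, write $a=3q+a'$ with $a'\in\{0,1,2\}$ (division with remainder), and subtract $q(3,2)=q(3e_1+2e_2)\in\Lambda_2$ from $(a,b)$ to obtain $(a',\,b-2q)$. Then write $b-2q=4q'+b'$ with $b'\in\{0,1,2,3\}$ and subtract $q'(0,4)=q'(4e_2)\in\Lambda_2$ to land at $(a',b')\in\tilde{\Z}_3\times\tilde{\Z}_4$. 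This shows that the $12$ elements of $\tilde{\Z}_3\times\tilde{\Z}_4$ surject onto $G_2$.

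Finally, since $|\tilde{\Z}_3\times\tilde{\Z}_4|=12=|G_2|$, the surjection must be a bijection, so these $12$ elements form a complete set of coset representatives. There is no serious obstacle here; the only thing to be a little careful about is that one must verify the two generators $3e_1+2e_2$ and $4e_2$ really suffice for the reduction (they do, because the matrix is upper triangular: reducing the first coordinate modulo $3$ using the first generator, and then the second coordinate modulo $4$ using the second generator, is always possible and leaves the first coordinate unchanged in the second step).
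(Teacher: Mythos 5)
Your proof is correct, and it is exactly the routine verification the paper intends: the authors state only that the lemma ``can be readily verified'' and give no proof, and your argument (index $=|\det \bG|=12$ from the upper-triangular generator matrix, plus the division-with-remainder reduction showing $\tilde{\Z}_3\times\tilde{\Z}_4$ surjects onto, hence bijects with, the $12$ cosets) is the standard way to verify it. No gaps.
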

Let $(\tilde{\Z}_3 \times \tilde{\Z}_4)^m \deff
\underbrace{(\tilde{\Z}_3 \times \tilde{\Z}_4) \times
(\tilde{\Z}_3 \times \tilde{\Z}_4) \times ~ \cdots ~ \times
(\tilde{\Z}_3 \times \tilde{\Z}_4)}_{m\textrm{ times}}$.
\begin{cor}
\label{cor:rep_n} The group $G_n$ has size $12^\nu$ and the $12^\nu$
representatives of elements from $G_n$ (the cosets of $\Lambda_n$
in $\Z^n$) can be taken as the elements of
$(\tilde{\Z}_3 \times \tilde{\Z}_4)^\nu$.
\end{cor}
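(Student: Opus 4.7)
The plan is to exploit the block structure of the generator matrix of $\Lambda_n$. The defining basis of $\Lambda_n$ consists of $\nu$ vectors of the form $3e_{2i-1}+2e_{2i}$ and $\nu$ vectors of the form $4e_{2i}$, for $1 \leq i \leq \nu$. Crucially, all four basis vectors associated to a given index $i$ (namely $3e_{2i-1}+2e_{2i}$ and $4e_{2i}$) lie entirely in the two-dimensional coordinate plane spanned by $e_{2i-1}$ and $e_{2i}$, and involve no other coordinates. So after reordering rows, the generator matrix is block-diagonal with $\nu$ identical $2\times 2$ blocks
$
\begin{scriptsize}\left[\begin{array}{cc} 3 & 2 \\ 0 & 4 \end{array}\right]\end{scriptsize}.
$

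First, I would formalize this by writing $\Lambda_n = \Lambda^{(1)} \oplus \Lambda^{(2)} \oplus \cdots \oplus \Lambda^{(\nu)}$, where $\Lambda^{(i)}$ is the rank-2 lattice generated (inside the plane $\Span{e_{2i-1},e_{2i}}$) by $3e_{2i-1}+2e_{2i}$ and $4e_{2i}$. Each $\Lambda^{(i)}$ is a copy of $\Lambda_2$ sitting in the $i$-th coordinate pair. Since $\Z^n$ likewise decomposes as the direct sum of its $\nu$ coordinate planes, we obtain a canonical isomorphism
\begin{equation*}
G_n \;=\; \Z^n / \Lambda_n \;\cong\; \prod_{i=1}^{\nu} \bigl(\Z^2/\Lambda^{(i)}\bigr) \;\cong\; G_2^{\,\nu}.
\end{equation*}

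Next, applying Lemma~\ref{lem:rep_2}, each factor $\Z^2/\Lambda^{(i)}$ has size $12$ with representatives $\tilde{\Z}_3 \times \tilde{\Z}_4$ in the coordinates $(e_{2i-1}, e_{2i})$. Multiplying across the $\nu$ factors immediately yields $|G_n| = 12^\nu$ and a system of coset representatives given by the Cartesian product $(\tilde{\Z}_3 \times \tilde{\Z}_4)^\nu$, which is the desired conclusion. There is essentially no obstacle here: the argument is a routine direct-sum decomposition, and the only step deserving care is verifying that the representative set from Lemma~\ref{lem:rep_2} is compatible with the direct product, which follows because cosets of a direct sum of sublattices correspond to tuples of cosets in the summands.
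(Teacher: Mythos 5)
Your proof is correct and is exactly the argument the paper intends: the corollary is stated as an immediate consequence of Lemma~\ref{lem:rep_2}, since $\Lambda_n$ splits coordinate-pair by coordinate-pair into $\nu$ copies of $\Lambda_2$, giving $G_n \cong G_2^{\,\nu}$. Your write-up simply makes this direct-sum decomposition explicit, so it matches the paper's (implicit) proof.
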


Consider the mapping $\Phi:{\mathbb{Z}}_3^{\nu}\rightarrow G_n$
defined by
$$
\Phi(x_1,x_2,...,x_{\nu})=(\phi(x_1),\phi(x_2),...,\phi(x_{\nu}))~,
$$
where $\phi:{\mathbb{Z}}_3\rightarrow G_2$ is a mapping defined by
\begin{equation*}
\phi(x)=\begin{cases}
                   (0,0) & ~~\mbox{if}~x=0\\
                    (1,2) & ~~\mbox{if}~x=1\\
                    (2,0) & ~~\mbox{if}~x=2
                 \end{cases}~.
\end{equation*}\\
It is easy to verify that both $\phi$ and $\Phi$ are injective group
homomorphisms.

Let $\cC$ be a ternary perfect code of length $\nu$ with
$3^{\nu-t}$ codewords, and let $\Phi (\cC) \deff \{ \Phi
(\tilde{c}) : \tilde{c} \in \cC \}$. Since the elements of $\Phi
(\cC)$ are representatives of elements of $G_n$ (see
Corollary~\ref{cor:rep_n}) it follows that the elements of $\Phi
(\cC)$ can be considered as elements in $\Z^n$. Let $\T_n \deff
\Phi (\cC) + \Lambda_n$.

\begin{theorem}
\label{thm:tiling3n} The set $\T_n$ is a tiling of $\Z^n$ with
$\Upsilon_n$.
\end{theorem}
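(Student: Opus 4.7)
The plan is to pass to the finite quotient group $G_n = \Z^n/\Lambda_n$ and show there that the images of $\T_n$ and of $\Upsilon_n$ interact exactly as a perfect code and its Hamming sphere do. Since $\T_n = \Phi(\cC) + \Lambda_n$ is $\Lambda_n$-periodic, establishing the tiling amounts to verifying that every $\bar z \in G_n$ admits a unique decomposition $\bar z = \Phi(c) + \bar u$ with $c \in \cC$ and $\bar u$ in the image $\bar\Upsilon_n$ of $\Upsilon_n$ in $G_n$. The cardinalities already agree, namely $|\cC|\cdot|\Upsilon_n| = 3^{\nu-t}\cdot 2^n\cdot 3^t = 12^\nu = |G_n|$ by Corollary~\ref{cor:rep_n}, so only existence and uniqueness of the decomposition need to be proved.

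The structural heart of the argument is a set-wise direct-sum decomposition of $G_2$. Let $A \subseteq G_2$ denote the image of the core pair $\{-1,0\}^2$ under the quotient by $\Lambda_2$; a direct computation gives $A = \{(0,0),(0,3),(2,1),(2,2)\}$ in the representative system $\tilde{\Z}_3 \times \tilde{\Z}_4$, and letting $B = G_2 \setminus A$ one checks that $B$ is precisely the image modulo $\Lambda_2$ of the eight non-core points of $\Upsilon_2$. The key verification is that $A$ is a complete set of coset representatives for the subgroup $\phi(\Z_3) = \{(0,0),(1,2),(2,0)\}$ of $G_2$, which yields the direct sum $G_2 = \phi(\Z_3) \oplus A$ and, by counting, the disjoint union $B = (A + \phi(1)) \cup (A + \phi(2))$. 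This transversal check is the main computational step of the proof; once it is in hand, the remainder is formal manipulation.

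Applied coordinate by coordinate, the decomposition upgrades to $G_n = \Phi(\Z_3^\nu) \oplus A^\nu$: every $\bar z \in G_n$ has a unique representation $\bar z = \Phi(e) + a$ with $e \in \Z_3^\nu$ and $a \in A^\nu$, and from the pair-by-pair decomposition $\bar z_i \in A$ iff $e_i = 0$ while $\bar z_i \in B$ iff $e_i \in \{1,2\}$. Because each $u \in \Upsilon_n$ has at most one coordinate in $\{-2,1\}$, at most one of its $\nu$ coordinate-pairs lands in $B$ modulo $\Lambda_2$ while the others land in $A$. A short count shows that $\Upsilon_n \to G_n$ is injective (the pattern of $A$/$B$-projections separates different elements), so $\bar\Upsilon_n = \{\Phi(e) + a : e \in \Z_3^\nu,\, a \in A^\nu,\, w_H(e) \le 1\}$.

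Existence and uniqueness of the tiling decomposition then follow from $\cC$ being a perfect ternary code with $d_H(\cC) = 3$. Given $\bar z = \Phi(e_z) + a_z$, the perfect-code property produces a unique $c \in \cC$ with $d_H(c, e_z) \le 1$; then $\bar u := \Phi(e_z - c) + a_z$ lies in $\bar\Upsilon_n$ and satisfies $\Phi(c) + \bar u = \bar z$. Conversely, if $\Phi(c_1) + \bar u_1 = \Phi(c_2) + \bar u_2$ with $\bar u_j = \Phi(f_j) + a_j$ and $w_H(f_j) \le 1$, uniqueness of the direct-sum decomposition forces $c_1 + f_1 = c_2 + f_2$ and $a_1 = a_2$; hence $c_1 - c_2 = f_2 - f_1$ has Hamming weight at most $2$, which together with $d_H(\cC) = 3$ forces $c_1 = c_2$ and $\bar u_1 = \bar u_2$. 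This proves that $\T_n$ tiles $\Z^n$ with $\Upsilon_n$.
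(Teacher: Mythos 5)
Your proof is correct, and it reaches the theorem by a genuinely different organization than the paper, although the two share the same computational core. The paper argues in two steps: a counting claim showing that distinct pairs $(\tilde{c},Y)\in\cC\times\Lambda_n$ give distinct points, whence $|\T_n\cap\tilde{\Z}_{12}^n|=2^{2\nu}3^{2\nu-t}$, and a covering argument in which the partition of $\tilde{\Z}_3\times\tilde{\Z}_4$ into the classes $[(0,0)],[(1,2)],[(2,0)]$, the properties (P.1) and (P.2) of Table~\ref{tab:t1}, the map $\Psi$, and the perfect-code property of $\cC$ are combined through Lemma~\ref{lem:cover_cond} to show that every point of $\Z^n$ is covered; tiling then follows because a covering achieving the exact cardinality must be exact. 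Your sets $A$, $A+\phi(1)$, $A+\phi(2)$ are precisely those three classes, so your transversal check is essentially the computation recorded in Table~\ref{tab:t1}; the difference is that you package it as a factorization $G_2=\phi(\Z_3)\oplus A$ with $A$ the image of the core, lift it to $G_n=\Phi(\Z_3^\nu)\oplus A^\nu$, identify the image of $\Upsilon_n$ in $G_n$ as $\Phi$ of the Hamming ball of radius one translated by $A^\nu$, and then prove existence and uniqueness of the decomposition directly: covering comes from the covering radius of $\cC$, and disjointness comes from $d_H(\cC)=3$ together with uniqueness in the direct sum, rather than from a cardinality count. What your route buys: the counting becomes a mere sanity check, the packing property is proved explicitly instead of being inferred from the volume argument, and the structural reason the construction works is laid bare --- modulo $\Lambda_n$, ``tiling by $\Upsilon_n$'' literally becomes ``perfect ternary code'', which also makes the converse direction easy to see. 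What the paper's route buys: it only needs the one-sided covering facts (P.1), (P.2) and never has to establish the injectivity of $\Upsilon_n\to G_n$ nor the exact description of its image. One point to keep in the right order in your write-up: the reduction of the $\Z^n$-tiling statement to unique decomposition in $G_n$ already uses the injectivity of $\Upsilon_n\to G_n$ (otherwise a point could be covered twice by one translate class), so that injectivity --- which does follow from your observation that the core and the non-core points of $\Upsilon_2$ map bijectively onto $A$ and $G_2\setminus A$, applied pair by pair --- should be stated before, not after, you pass to the quotient.
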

\begin{proof}
Clearly, $\Lambda_n$ is a lattice with period 12 and hence $\T_n$
is a periodic code of $\Z^n$ with period 12. Therefore, without loss of generality
we can restrict our discussion to $\Z_{12}^n$. i.e. codewords of
$\T_n \cap \tilde{\Z}_{12}^n$. Since $|\Upsilon_n| = 2^{2\nu} 3^t$
it follows that the size of the tiling $\T_n$ in
$\tilde{\Z}_{12}^n$, $| \T_n \cap \tilde{\Z}_{12}^n |$, should be
$2^{2\nu} 3^{2\nu-t}$. To prove that $\T_n$ is a tiling of $\Z^n$
with $\Upsilon_n$ we will show that the size of $\T_n \cap
\tilde{\Z}_{12}^n$ is $2^{2\nu} 3^{2\nu-t}$ and we will prove that
each point of $\Z^n$ is covered by an element of $\T_n$.

\noindent {\bf Claim:} For any two codewords $\tilde{c}_1 ,
\tilde{c}_2 \in \cC$, and two lattice points $Y_1 ,Y_2 \in
\Lambda_n$, we have $\Phi (\tilde{c}_1)+ Y_1 \neq \Phi
(\tilde{c}_2)+ Y_2$, unless $\tilde{c}_1 = \tilde{c}_2$ and $Y_1 =
Y_2$.

\noindent {\bf Proof:} Assume that $\Phi (\tilde{c}_1)+ Y_1 = \Phi
(\tilde{c}_2)+ Y_2$, i.e. $\Phi (\tilde{c}_1) - \Phi (\tilde{c}_2)
= Y_2 -Y_1$, $\tilde{c}_1 , \tilde{c}_2 \in \cC$ and $Y_1 ,Y_2 \in
\Lambda_n$. Hence, $Y_2 - Y_1 =(\alpha_1 , \ldots , \alpha_n)$ is a
lattice point and unless $Y_1 = Y_2$ we have that for at least one
$i$, $| \alpha_i | > 2$. Denote $\Phi (\tilde{c}_1) - \Phi (\tilde{c}_2)
= (\beta_1 , \ldots , \beta_n)$. By the definition of $\Phi$, for
each $i$, $1 \leq i \leq n$, we have $| \beta_i | \leq 2$.
Therefore, $Y_1 = Y_2$ and $\Phi (\tilde{c}_1) = \Phi
(\tilde{c}_2)$ and since $\Phi$ is an injective mapping it implies
that $\tilde{c}_1 = \tilde{c}_2$ and the claim is proved.

The claim implies that $| \T_n \cap \tilde{\Z}_{12}^n | = |\Phi
(\cC)| \cdot |\Lambda_n \cap \tilde{\Z}_{12}^n |$. Since $\Phi$ is
an injective mapping we also have that $|\Phi (\cC)| = |\cC|$.
Since $\Lambda_n$ has period 12 and $V(\Lambda_n) = 12^\nu$ it
follows that $|\Lambda_n \cap \tilde{\Z}_{12}^n | = 12^\nu$.
Therefore,
$$
| \T_n \cap \tilde{\Z}_{12}^n | = |\Phi (\cC)| \cdot |\Lambda_n
\cap \tilde{\Z}_{12}^n | = |\cC| \cdot |\Lambda_n \cap
\tilde{\Z}_{12}^n | = 3^{\nu-t} 12^\nu =2^{2\nu} 3^{2\nu-t}
$$
as required.

To show that every point of $\Z^n$ is covered by an element of
$\T_n$ we first partition the elements of $\tilde{\Z}_3 \times
\tilde{\Z}_4$ into three classes:
$$
\begin{array}{c}
[(0,0)]=\{(0,0),(0,3),(2,2),(2,1)\} \\
{[(1,2)]}=\{(1,2),(1,1),(0,1),(0,2)\} \\
{[(2,0)]}= \{(2,0),(1,3),(2,3),(1,0)\}
\end{array} ~,
$$
The following two properties are readily verified (as can be
verified from Table~\ref{tab:t1}).
\begin{itemize}
\item[{\bf (P.1)}] For each element $(x_1,x_2)$ in a class
$[(y_1,y_2)]$ there exists an element $(u_1,u_2) \in \Lambda_2$
such that $u_i+y_i\in\{x_i,x_i+1\}$, for $i\in\{1,2\}$.

\item[{\bf (P.2)}] For each element $(x_1,x_2) \in \tilde{\Z}_3
\times \tilde{\Z}_4$ and each class $[(y_1,y_2)]$ there exists an
element $(u_1,u_2) \in \Lambda_2$ such that $u_i+y_i\in
\{x_i-1,x_i,x_i +1,x_i+2\}$, for $i\in\{1,2\}$, and for at most one
$i$ we have $u_i+y_i\in\{x_i-1,x_i+2\}$.
\end{itemize}
\begin{table}
\centering
\begin{tabular}{|c|c|c|c|c|c|c|}
\hline
& $class$~[(0,0)]   & (0,0) & (0,3) & (2,2) & (2,1) & $\longleftarrow$ $(x_1,x_2)$ \\
\hline
{\bf (P.1)} & $[(y_1,y_2)]=[(0,0)]$ &(0,0) & (0,4) & (3,2) & (3,2) & $\longleftarrow$ $(u_1,u_2) + (y_1,y_2)$ \\
\hline
{\bf (P.2)} & $[(y_1,y_2)]=[(1,2)]$& (1,2) & (1,2) & (1,2) & (1,2) & $\longleftarrow$ $(u_1,u_2) + (y_1,y_2)$ \\
\hline
{\bf (P.2)} & $[(y_1,y_2)]=[(2,0)]$ & (2,0) & (2,4) & (2,4) &(2,0) & $\longleftarrow$ $(u_1,u_2)+ (y_1,y_2)$ \\
\hline \hline
&  $class$~[(1,2)] &(1,2)& (1,1)&(0,1) & (0,2) & $\longleftarrow$ $(x_1,x_2)$\\
\hline
{\bf (P.2)} &   $[(y_1,y_2)]=[(0,0)]$& (3,2) &(3,2) & (0,0) & (0,4) & $\longleftarrow$ $(u_1,u_2)+ (y_1,y_2)$ \\
\hline
{\bf (P.1)} &   $[(y_1,y_2)]=[(1,2)]$& (1,2) & (1,2) & (1,2) & (1,2) & $\longleftarrow$ $(u_1,u_2)+ (y_1,y_2)$ \\
\hline
{\bf (P.2)} &   $[(y_1,y_2)]=[(2,0)]$& (2,4) & (2,0) & (-1,2) & (-1,2) & $\longleftarrow$ $(u_1,u_2)+ (y_1,y_2)$ \\
\hline \hline
&   $class$~[(2,0)] & (2,0) & (1,3) & (2,3) &(1,0) & $\longleftarrow$ $(x_1,x_2)$\\
\hline
{\bf (P.2)} &   $[(y_1,y_2)]=[(0,0)]$ & (3,2) & (0,4) & (3,2) &(0,0) & $\longleftarrow$ $(u_1,u_2)+ (y_1,y_2)$ \\
\hline
{\bf (P.2)} & $[(y_1,y_2)]=[(1,2)]$ & (4,0) & (1,2) & (4,4) & (1,2) & $\longleftarrow$ $(u_1,u_2)+ (y_1,y_2)$ \\
\hline
{\bf (P.1)} & $[(y_1,y_2)]=[(2,0)]$ & (2,0) & (2,4) & (2,4) & (2,0) & $\longleftarrow$ $(u_1,u_2)+ (y_1,y_2)$ \\
\hline
\end{tabular}
 \caption{}
 \label{tab:t1}
\end{table}


Consider the mapping $\Psi: (\tilde{\Z}_3 \times \tilde{\Z}_4)^\nu
\rightarrow {\mathbb{Z}}_3^{\nu}$ defined by
$$
\Psi(x_1,x_2,...,x_n)=(\psi(x_1,x_2),\psi(x_3,x_4),...,\psi(x_{n-1},x_n))~,
$$
where $\psi:{\tilde{\Z}_3 \times \tilde{\Z}_4 \rightarrow
\mathbb{Z}}_3$ is a mapping defined by
\begin{equation*}
\psi(x_1,x_2)=\begin{cases}
                   0 & ~~\mbox{if}~(x_1,x_2) \in [(0,0)]\\
                    1 & ~~\mbox{if}~(x_1,x_2) \in [(1,2)]\\
                    2 & ~~\mbox{if}~(x_1,x_2) \in [(2,0)]
                 \end{cases}~.
\end{equation*}
For a given point $A = (a_1 , a_2 , \ldots , a_n ) \in \Z^n$ we
will exhibit a point $X \in \T_n$ which covers $A$.  By
Corollary~\ref{cor:rep_n} we have that there exists an element $Y
\in \Lambda_n$ such that $A+Y \in (\tilde{\Z}_3 \times
\tilde{\Z}_4)^\nu$. Let $B=A+Y=(b_1,b_2, \ldots , b_n)$ and let
$\Psi (B) =(\alpha_1 , \alpha_2 , \ldots , \alpha_{\nu} ) \in
\Z_3^\nu$. Since $\cC$ is a perfect code of length $\nu$ over
$\Z_3$ it follows that there exists a codeword $(c_1 , c_2 ,
\ldots , c_\nu ) \in \cC$ such that $d_H ((\alpha_1 , \alpha_2 ,
\ldots , \alpha_{\nu} ),(c_1 , c_2 , \ldots , c_\nu )) \leq 1$.
Let $(\gamma_1 , \gamma_2 , \ldots , \gamma_n ) = \Phi (c_1 , c_2
, \ldots , c_\nu )$. Note that by the definitions of $\Phi$ and
$\Psi$ it follows that $(b_{2i-1},b_{2i})$ and $\phi(\alpha_i)$
are in the same class, for all $1\leq i\leq \nu$. Now, we
distinguish between two cases:

\noindent {\bf Case 1:} If $(\alpha_1 , \alpha_2 , \ldots ,
\alpha_{\nu} )=(c_1 , c_2 , \ldots , c_\nu )$ then by property {\bf (P.1)}
there exists an element $(u_1,u_2, \ldots u_n) \in \Lambda_n$ such that
$u_i+\gamma_i\in \{b_i,b_i+1\}$, for $1\leq i\leq n$. Therefore, by
Lemma \ref{lem:cover_cond} we have that
$(u_1,u_2,\ldots,u_n)+(\gamma_1 , \gamma_2 , \ldots , \gamma_n )$
covers $B$ and hence the required $X$ is $(u_1,u_2, \ldots
u_n)+(\gamma_1 , \gamma_2 , \ldots , \gamma_n )-Y$.

\noindent {\bf Case 2:} If $(\alpha_1 , \alpha_2 , \ldots ,
\alpha_{\nu} ) \neq (c_1 , c_2 , \ldots , c_\nu )$ then
$d_H((\alpha_1 , \alpha_2 , \ldots , \alpha_{\nu} ),(c_1 , c_2 ,
\ldots , c_\nu )) =1$ and hence there exists exactly one coordinate
$s$ such that $\alpha_s \neq c_s$. By properties {\bf (P.1)} and {\bf (P.2)}
there exists an element $(u_1,u_2, \ldots , u_n) \in \Lambda_n$ such that
$u_i+\gamma_i\in \{b_i-1,b_i,b_i+1,b_i+2\}$, for $1\leq i\leq n$, and
for at most one $i$ we have $u_i+\gamma_i\in \{b_i-1,b_i+2\}$.
Therefore, by Lemma~\ref{lem:cover_cond} we have that
$(u_1,u_2,\ldots,u_n)+(\gamma_1 , \gamma_2 , \ldots , \gamma_n )$
covers $B$ and hence the required $X$ is $(u_1,u_2, \ldots ,
u_n)+(\gamma_1 , \gamma_2 , \ldots , \gamma_n )-Y$.

Since we proved that the size of $\T_n \cap
\tilde{\Z}_{12}^n$ is $2^{2\nu} 3^{2\nu-t}$ and
each point of $\Z^n$ is covered by an element of $\T_n$,
the theorem is proved.
\end{proof}

\begin{theorem}
If $\cC$ is a linear code then $\T_n$ is a lattice tiling.
\end{theorem}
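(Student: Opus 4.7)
The plan is to exploit the fact already observed in the excerpt that both $\phi:\Z_3\to G_2$ and $\Phi:\Z_3^\nu\to G_n$ are injective group homomorphisms, together with the obvious observation that $\Lambda_n$ is the kernel of the quotient map $q:\Z^n\to G_n = \Z^n/\Lambda_n$. Assuming $\cC\subseteq\Z_3^\nu$ is linear means $\cC$ is an additive subgroup of $\Z_3^\nu$, so its image $\Phi(\cC)$ is a subgroup of $G_n$. Then the preimage $q^{-1}(\Phi(\cC))\subseteq\Z^n$ is automatically a subgroup of $\Z^n$, and under the identification of elements of $\Phi(\cC)\subseteq G_n$ with their canonical integer representatives in $(\tilde{\Z}_3\times\tilde{\Z}_4)^\nu$ used in the definition of $\T_n$, this preimage is precisely $\Phi(\cC)+\Lambda_n = \T_n$.

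Thus $\T_n$ is an additive subgroup of $\Z^n$. In particular, it is a discrete subgroup of $\R^n$. It contains the full-rank sublattice $\Lambda_n$, so $\T_n$ spans $\R^n$ as well and therefore has rank $n$. A discrete subgroup of $\R^n$ of rank $n$ is a lattice in the sense of Section~\ref{sec:basic}. Combined with Theorem~\ref{thm:tiling3n}, which already asserts that $\T_n$ is a tiling with $\Upsilon_n$, this shows that $\T_n$ is a lattice tiling.

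No obstacle is expected: the entire argument is a bookkeeping translation of \textquotedblleft linear code\textquotedblright\ through the homomorphism $\Phi$ into \textquotedblleft subgroup of $\Z^n$.\textquotedblright\ The only point that requires even a moment of thought is verifying that the representative set $(\tilde{\Z}_3\times\tilde{\Z}_4)^\nu$ used to view $\Phi(\cC)$ inside $\Z^n$ yields the same set $\T_n=\Phi(\cC)+\Lambda_n$ as the group-theoretic preimage $q^{-1}(\Phi(\cC))$; this is immediate from Corollary~\ref{cor:rep_n}, which says these representatives form a transversal of $\Lambda_n$ in $\Z^n$.
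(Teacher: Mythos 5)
Your proof is correct and follows essentially the same route as the paper, which simply notes that the claim follows from Theorem~\ref{thm:tiling3n} together with the linearity of $\cC$ and the fact that $\Phi$ is a group homomorphism; you have merely spelled out the details (identifying $\T_n=\Phi(\cC)+\Lambda_n$ with the preimage of the subgroup $\Phi(\cC)\subseteq G_n$ under the quotient map, and noting that a subgroup of $\Z^n$ containing the full-rank lattice $\Lambda_n$ is itself a full-rank lattice). The transversal observation via Corollary~\ref{cor:rep_n} is the right way to justify the identification, so no gap remains.
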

\begin{proof}
Follows immediately from Theorem~\ref{thm:tiling3n} and the facts
that $\cC$ is a linear code and $\Phi$ is a group homomorphism.
\end{proof}


\begin{center}
{\bf Acknowledgement}
\end{center}
We would like to thank Italo J. Dejter for bringing the connection
to the perfect dominating set to our attention. We are grateful
to an anonymous reviewer for many important remarks which have
improved considerably the presentation of the paper.

%
%

\end{document}